
\documentclass[preprint,12pt]{elsarticle}




\usepackage{amssymb}
\usepackage{amsmath}
\usepackage{amsthm}
\usepackage{pdflscape}
\newtheorem{corollary}{Corollary}
\newtheorem{definition}{Definition}
\newtheorem{example}{Example}
\newtheorem{lemma}{Lemma}
\newtheorem{nono-proof}{Proof}
\newtheorem{proposition}{Proposition}
\newtheorem{remark}{Remark}
\newtheorem{theorem}{Theorem}
\usepackage{lineno}

\journal{Journal of Mathematical Analysis and Applications}

\begin{document}

\begin{frontmatter}



\title{Towards a complete characterization of indicator variograms and madograms} 

\author[label1,label2]{Xavier Emery\corref{cor1}} 
\ead{xemery@ing.uchile.cl}
\author[label3]{Christian Lantuéjoul} 
\ead{christian.lantuejoul@minesparis.psl.eu}
\author[label1,label2]{Nadia Mery}
\ead{nmeryg@uchile.cl}
\author[label4]{Mohammad Maleki} 
\ead{mohammad.maleki@ucn.cl}
\cortext[cor1]{Corresponding author.}

\affiliation[label1]{organization={Universidad de Chile, Department of Mining Engineering},
             addressline={Avenida Beauchef 850},
             city={santiago},
             postcode={8370448},
             country={Chile}}

\affiliation[label2]{organization={Universidad de Chile, Advanced Mining Technology Center},
             addressline={Avenida Beauchef 850},
             city={santiago},
             postcode={8370448},
             country={Chile}}

\affiliation[label3]{organization={Mines ParisTech, PSL University, Centre de Géosciences},
             addressline={35 Rue Saint-Honoré},
             city={Fontainebleau},
             postcode={77300},
             country={France}}

\affiliation[label4]{organization={Universidad Católica del Norte, Department of Metallurgical and Mining Engineering},
             addressline={Angamos 0610},
             city={Antofagasta},
             postcode={1270709},
             country={Chile}}
             
\begin{abstract}
Indicator variograms and madograms are structural tools used in many disciplines of the natural sciences and engineering to describe random sets and random fields. To date, several necessary conditions are known for a function to be a valid indicator variogram but, except for intractable corner-positive inequalities, a complete characterization of indicator variograms is missing. Likewise, only partial characterizations of madograms are known. This paper provides novel necessary and sufficient conditions for a given function to be the variogram of an indicator random field with constant mean value or to be the madogram of a random field, and establishes under which conditions these two families of functions coincide. Our results apply to any set of points where the random field is defined and rely on distance geometry and Gaussian random field theory.
\end{abstract}



\begin{keyword}
Gaussian random fields \sep Truncated Gaussian random fields \sep Distance geometry \sep Conditionally negative semidefinite functions \sep Median indicator variograms \sep Realizability problem



\end{keyword}

\end{frontmatter}


\section{Introduction}
\label{sec1}

\subsection{Random Sets, Indicator Variograms and Madograms}

Random sets are stochastic models whose realizations are subsets of a given metrizable space $\mathbb{X}$, e.g. a Euclidean space, a sphere, or a graph. The theory of random sets has been formalized by Matheron \citep{Matheron1975} and is of interest in probability, stochastic geometry, mathematical morphology, and geostatistics \citep{Serra1982, Lantuejoul2002, Chiu2013, Molchanov2017, Jeulin2021}.   

{This theory requires equipping the space $\mathbb{X}$ with the hit-and-miss (Fell) topology and focusing on closed subsets of $\mathbb{X}$. In such a case, a random closed set is fully characterized by its \emph{hitting functional}, which gives the probability that any compact (i.e., closed and bounded) set overlaps with the random closed set. An alternative characterization of a random closed set is via its indicator function, that is, the random field equal to $1$ inside the random set and to $0$ outside. Under an assumption of \emph{separability} (see \ref{app_background} for technical details), the random set is also completely characterized by all the finite-dimensional distributions of its indicator function \citep{Matheron1975}.}

When considering only two points in $\mathbb{X}$, a simplification that is often made in geostatistics to ease the modeling of the random set based on observations at finitely many points in $\mathbb{X}$, the finite-dimensional distribution is equivalent to the covariance function or the variogram. The latter functions have been recognized as tools to describe the geometry of the set boundary \citep{Emery2011} and are widely used as an input for indicator kriging \citep{Journel1983} or for simulating random sets with a given spatial correlation structure, e.g., via sequential indicator \citep{Alabert1987} or stochastic optimization \citep{Yeong1998, Jiao2008} algorithms. Application fields for indicator covariances and variograms, indicator kriging and indicator simulation include telecommunications \citep{McMillan1955}, materials science \citep{Jeulin2000, Torquato2002}, ecology \citep{Rossi1992}, environmental science \citep{Webster1994}, soil science \citep{He2009}, hydrology \citep{Western1998}, geology \citep{Jones2001}, petroleum reservoir modeling \citep{Guo2010, Cao2014}, and mineral deposit modeling \citep{Maleki2017}, among others.

An example of random set that is recurrent in these applications is the ``excursion set'' of a real-valued random field $Z$, that is, the region where the random field exceeds a certain threshold $\lambda$:
$$ g_\lambda (x,y) = \frac{1}{2} var \bigl\{ \mathsf{1}_{Z(x) \geq \lambda} - \mathsf{1}_{Z(y) \geq \lambda} \bigr\}, \qquad x,y \in \mathbb{X}, $$
where $\mathsf{1}_{Z(x) \geq \lambda} = 1$ if $Z(x) \geq \lambda$, $0$ otherwise. 
By letting the threshold vary, one obtains information on how fast the excursion sets associated with large thresholds lose their spatial correlation \citep{chiles_delfiner_2012}.

If the marginal distribution of $Z(x)$ is the same for all the points $x \in \mathbb{X}$, the integral of all the excursion set indicator variograms coincides with the so-called madogram, or first-order variogram, of $Z$ \citep{Matheron1989}:
\begin{equation*}
    \int_{-\infty}^{+\infty} g_\lambda (x,y) \, {\rm d} \lambda = \gamma^{(1)} (x,y), \qquad x,y \in \mathbb{X},
\end{equation*}
where
\begin{equation*}
    \gamma^{(1)}(x,y) = \frac{1}{2} \mathbb{E} \bigl\{ \vert Z(x) - Z (y) \vert \bigr\}, \quad x,y \in \mathbb{X}.
\end{equation*}

The above identity shows the close relationship between indicator variograms and madograms. The latter are used in spatial statistics to diagnose the fractal character of the realizations of a random field \citep{chiles_delfiner_2012}, and in spatial extreme value theory to characterize the extremal coefficient of max-stable random fields \citep{naveau2009} or the spatial dependence of extreme values \citep{bacro2010}.

\subsection{The Need for Consistent Models}

It is known \citep{Matheron1989} that every indicator variogram is a madogram, and that every madogram is a variogram. Conversely, one can wonder whether every variogram is a madogram and every madogram is (proportional to) an indicator variogram. Regarding the first question, the answer is negative. For instance, in Euclidean spaces, an isotropic variogram with a smooth behavior near the origin (e.g. a Gaussian variogram) cannot be a madogram. Regarding the second question, the answer is unknown. 

\medskip

The following example shows what is incurred when an arbitrary variogram is used in place of a genuine indicator variogram. Let $A$ be a separable random set 
defined on $\mathbb{X}$. It is characterized by the family of probabilities 
$$ P \bigl( B_U^V \bigr) = P \{ U \subset A, V \subset A^c \} \qquad U,V \subset \mathbb{X}, $$
with $A^c$ the complement of $A$ and $P(B_U)$ as a shorthand notation for $P(B_U^{\emptyset})$. 

Let $g$ be the variogram of the indicator of $A$ and assume that $A$ is ``autodual'', i.e, $A$ and $A^c$ have the same distributions, which means that $ P(B_U^V) = P(B_V^U)$ for all subsets $U$ and $V$ of $\mathbb{X}$ \citep{Matheron1993}. For any $x,y,z \in \mathbb{X}$, this implies $P(B_{x}) = \frac{1}{2}$, $P(B_{x,y}) = 
\frac{1}{2} - g(x,y)$ and $P(B_{x,y,z}) = \frac{1}{2} \bigl( 1 - g(x,y) - g(x,z) - g(y,z) \bigr) $, the latter equality stemming 
from Euler-Poincaré formula. Of paramount importance is the fact that all the trivariate distributions are characterized by the indicator variogram \citep{Lantuejoul2002}. In particular,  
$$ P(B_{x,z}^y) = P(B_{x,z}) - P(B_{x,y,z}) = \frac{1}{2} \bigl( g(x,y) - g(x,z) + g(y,z) \bigr), \quad x, y, z \in \mathbb{X}. $$

Now, here comes the problem: if $g$ is an invalid model that violates the triangle inequality $g(x,z) \leq g(x,y) + g(y,z)$, then $P(B_{x,z}^y) < 0$. 

Similar inconsistencies (negative probabilities of some events involving three points) were pointed out by Dubrule \cite{Dubrule2017} when $\mathbb{X}$ is the Euclidean space and $g$ is an isotropic Gaussian, cubic, or cardinal sine variogram, which does not fulfill the triangle inequality.

\subsection{Aims and Scope}

The aim of this paper is to get a better understanding of how the sets of indicator variograms and madograms are organized within the set of variograms. The outline is as follows: Section \ref{sec2} reviews a number of known properties satisfied by variograms, indicator variograms, and madograms. The main new theoretical results that characterize indicator variograms and madograms are then given in Section \ref{sec3}. Section \ref{sec:examples} exhibit examples of valid parametric families of indicator variograms in Euclidean spaces, spheres, graphs, and in any abstract set of points. A general discussion and conclusions are presented in Sections \ref{sec:discussion} and \ref{sec:conclusions}. Background material and complementary results are provided in \ref{app_background} to \ref{excursion}, while the proofs of lemmas, propositions, theorems, corollaries, and examples, are deferred to \ref{app_proof}. 

\medskip
To achieve generality in our presentation, minimal assumptions will be made on the set $\mathbb{X}$ on which the random field is defined and on the random field itself. For indicator variograms, the only requirement will be that of \emph{first-order stationarity}, i.e., the indicator random field has a constant mean value (see \ref{app_background}). When linking madograms and indicator variograms, we will require the random field to have a stationary marginal distribution, i.e., the same marginal distribution for all the points in $\mathbb{X}$. 

\section{Review of Known Results}

Throughout this section, $\mathbb{X}$ denotes an abstract set of points. 

\label{sec2}

\subsection{Variograms and Conditional Negative Semidefiniteness}

The variogram of a random field $Z$ on $\mathbb{X}$ is the mapping $\gamma: \mathbb{X} \times \mathbb{X} \to [0,+\infty)$ such that
\begin{equation*}
    \gamma(x,y) = \frac{1}{2} var\{ Z(x) - Z(y) \}, \quad x, y \in \mathbb{X},
\end{equation*}
which exists as soon as the increments of $Z$ have a finite variance. If, furthermore, the random field is first-order stationary, then
\begin{equation*}
    \gamma(x,y) = \frac{1}{2} \mathbb{E}\{ [Z(x) - Z(y)]^2 \}, \quad x, y \in \mathbb{X}.
\end{equation*}

It is well-known \citep{Schoenberg1938, chiles_delfiner_2012} that the class of variograms coincides with the class of conditionally negative semidefinite mappings that are symmetric and vanish on the diagonal of $\mathbb{X} \times \mathbb{X}$, i.e.: 
\begin{equation}
\label{condneg0}
    \gamma(x,x) = 0, \quad x \in \mathbb{X},
\end{equation}
\begin{equation}
\label{condneg1}
    \gamma(x,y) = \gamma(y,x), \quad x, y \in \mathbb{X},
\end{equation}
and satisfy negative-type inequalities:
\begin{equation}
\label{condneg}
    \sum_{k=1}^n \sum_{\ell=1}^n \lambda_k \, \lambda_\ell \, \gamma(x_k,x_\ell) \leq 0
\end{equation}
for all $x_1,\ldots,x_n \in \mathbb{X}$ and $\lambda_1,\ldots,\lambda_n \in \mathbb{R}$ such that $\sum_{k=1}^n \lambda_k = 0$.

\subsection{Indicator Variograms}

Not every symmetric and conditionally negative semidefinite mapping is eligible for an indicator random field: The class of indicator variograms is a strict subclass of the mappings satisfying Eqs. (\ref{condneg0}) to (\ref{condneg}). 
This raises two questions: (1) given an indicator random field on $\mathbb{X}$, which properties must satisfy its variogram? and (2) given a mapping $g: \mathbb{X} \times \mathbb{X} \to [0, +\infty)$, can it be the variogram of an indicator random field on $\mathbb{X}$? This last question, known as reconstruction or realizability problem, is of interest in diverse fields of application \citep{Torquato2002, Lachieze2015}.

Concerning the first question, several additional inequalities that must fulfill a first-order stationary indicator variogram have been established in the literature. In particular, given a set of $n$ points $x_1,\ldots,x_n$ in $\mathbb{X}$, the following inequalities hold:
\begin{enumerate}
    \item Upper-bound inequality:
    \begin{equation}
    \label{upperbound}
           g(x_k,x_\ell) \leq \frac{1}{2}, \quad k, \ell \leq n.
    \end{equation}

    \item Triangle inequality \citep{Matheron1989, Armstrong1992}:
    \begin{equation}
    \label{triang}
           g(x_k,x_m) + g(x_\ell,x_m) \geq g(x_k,x_\ell), \quad k, \ell, m \leq n.
    \end{equation}

    \item Matheron's inequalities \citep{Matheron1993}: for all $\lambda_1,\ldots,\lambda_n \in \{-1,0,1\}$ such that $\sum_{k=1}^n \lambda_k = 1$,
    \begin{equation}
    \label{1clique}
           \sum_{k=1}^n \sum_{\ell=1}^n \lambda_k \, \lambda_\ell \, g(x_k,x_\ell) \leq 0.
    \end{equation}

    \item Shepp's inequalities \citep{Shepp1963}: for all odd integer $n$ and all $\lambda_1,\ldots,\lambda_n \in \{-1,1\}$,
    \begin{equation}
    \label{shepp63}
           \sum_{k=1}^n \sum_{\ell=1}^n \lambda_k \, \lambda_\ell \, g(x_k,x_\ell) \leq \frac{1}{4} \left(\sigma^2(\boldsymbol{\lambda})-1 \right),
    \end{equation}
    where $\boldsymbol{\lambda} = (\lambda_1,\ldots,\lambda_n)$ and $\sigma(\boldsymbol{\lambda}) = \sum_{k=1}^n \lambda_k$.

    \item Corner-positive inequalities \citep{McMillan1955}: for any $n > 1$ and any corner-positive matrix $[\lambda_{k\ell}]_{k,\ell=1}^n$ (i.e., a matrix such that $\sum_{k=1}^n \sum_{\ell=1}^n \varepsilon_k \, \varepsilon_\ell \, \lambda_{k\ell} \geq 0$ for any $(\varepsilon_1,\ldots,\varepsilon_n) \in \{-1,1\}^n$), one has
    \begin{equation}
    \label{corner}
           \sum_{k=1}^n \sum_{\ell=1}^n \lambda_{k\ell} [1 - 4 g(x_k,x_\ell)] \geq 0.
    \end{equation}
\end{enumerate}

Matheron's inequalities (\ref{1clique}) include the triangle inequality and were conjectured to be not only necessary, but also sufficient to characterize a first-order stationary indicator variogram \citep{Matheron1993}. 
Regrettably, such a conjecture is not true \citep{Lachieze2015}. Shepp's inequalities (\ref{shepp63}) are stronger, but it is not clear whether or not they are sufficient. In contrast, the corner-positive inequalities (\ref{corner}) have been shown to be necessary and sufficient, see Shepp \citep{Shepp1967} for the case $\mathbb{X}= \mathbb{Z}$, Masry \citep{Masry1972} for the case $\mathbb{X}= \mathbb{R}$ and Quintanilla \citep{Quintanilla2008} for the case $\mathbb{X}= \mathbb{R}^N$ with $N \geq 1$. Interestingly, this last author reduced the check of (\ref{corner}) on each finite population of points to a finite set of inequalities; however, this corner-positive criterion is effective only if $\mathbb{X}$ is made of finitely many points (e.g., a finite graph).

\medskip

An interesting subclass of indicator variograms are the median indicator variograms of Gaussian random fields:

\begin{proposition}[McMillan \citep{McMillan1955}]
\label{prop01}
    Let $\rho$ be a correlation function on $\mathbb{X} \times \mathbb{X}$, i.e., a positive semidefinite mapping that is equal to $1$ on the diagonal of $\mathbb{X} \times \mathbb{X}$. Then, the mapping $g$ defined by:
    \begin{equation}
        \label{indicvariog2}
        g(x,y) = \frac{1}{2\pi} \arccos \rho(x,y), \quad x, y \in \mathbb{X},
    \end{equation}
    is the median indicator variogram of a Gaussian random field on $\mathbb{X}$ with correlation function $\rho$.
\end{proposition}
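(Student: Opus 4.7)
The plan is to realize the claimed indicator variogram as coming from a specific Gaussian construction and then invoke the classical Sheppard orthant-probability formula to identify the resulting variogram.

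First I would use the fact that $\rho$ is positive semidefinite and equal to $1$ on the diagonal to construct, via the Kolmogorov extension theorem, a centered Gaussian random field $Z$ on $\mathbb{X}$ with $\mathrm{var}\{Z(x)\}=1$ for every $x\in\mathbb{X}$ and $\mathrm{cov}\{Z(x),Z(y)\}=\rho(x,y)$. Since each $Z(x)$ is standard normal, its median equals $0$, so the median indicator random field is $I(x)=\mathsf{1}_{Z(x)\geq 0}$, which is first-order stationary with $\mathbb{E}\{I(x)\}=\tfrac12$ and $\mathrm{var}\{I(x)\}=\tfrac14$.

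Next I would rewrite the variogram of $I$ as a bivariate orthant probability. Because $I(x)-I(y)\in\{-1,0,1\}$, one has
\begin{equation*}
g(x,y)=\tfrac12\mathbb{E}\bigl\{[I(x)-I(y)]^2\bigr\}=\tfrac12\,P\{I(x)\neq I(y)\}=P\{Z(x)\geq 0,\,Z(y)<0\},
\end{equation*}
where the last equality uses the symmetry of the bivariate normal law $(Z(x),Z(y))$ under the sign change $(z_1,z_2)\mapsto(-z_1,-z_2)$, which makes the two ``disagreement'' events equiprobable.

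The key step is then Sheppard's formula for the orthant probability of a standardized bivariate normal vector with correlation $\rho(x,y)$:
\begin{equation*}
P\{Z(x)\geq 0,\,Z(y)\geq 0\}=\tfrac14+\tfrac{1}{2\pi}\arcsin\rho(x,y).
\end{equation*}
Combined with $P\{Z(x)\geq 0\}=\tfrac12$, this yields $g(x,y)=\tfrac14-\tfrac{1}{2\pi}\arcsin\rho(x,y)$, and the identity $\arccos r=\tfrac{\pi}{2}-\arcsin r$ converts this to $g(x,y)=\tfrac{1}{2\pi}\arccos\rho(x,y)$, as required.

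The only nontrivial ingredient is Sheppard's arc-sine formula itself; I would either cite it as a classical result or sketch its one-parameter derivation by differentiating the orthant probability with respect to $\rho$ and recognizing the resulting integrand as the density of the angle between two unit Gaussian vectors. No existence or measurability issue is serious beyond invoking Kolmogorov's theorem, since the computation reduces at each pair $(x,y)$ to a bivariate Gaussian integral.
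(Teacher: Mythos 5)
Your argument is correct and is the classical derivation of this result: the paper itself gives no proof of Proposition~\ref{prop01}, simply citing McMillan, and your route (Kolmogorov extension to build a standard Gaussian field with correlation $\rho$, reduction of the indicator variogram to the disagreement probability $P\{Z(x)\geq 0,\,Z(y)<0\}$ via the central symmetry of the bivariate normal, and Sheppard's orthant formula $\tfrac14+\tfrac{1}{2\pi}\arcsin\rho$ followed by $\arccos r=\tfrac{\pi}{2}-\arcsin r$) is exactly the standard way to establish it. All steps check out, including the implicit use of continuity of the marginal law so that the choice of $\geq$ versus $>$ at the median is immaterial; the same $\arcsin$ identity reappears in the paper's proof of Theorem~\ref{thm1}, so your proof is fully consistent with how the paper uses this proposition downstream.
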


\subsection{Madograms}

Let $Z$ be a random field on $\mathbb{X}$. Its madogram, or first-order variogram, is defined as:
\begin{equation*}
    \gamma^{(1)}(x,y) = \frac{1}{2} \mathbb{E} \{ \lvert Z(x) - Z(y) \rvert \}, \quad x, y \in \mathbb{X},
\end{equation*}
provided that the expected value exists. Owing to Cauchy-Schwarz inequality, this is the case as soon as the variogram of $Z$ exists.

To date, little is known about the characterization of madograms, most of which are due to Matheron \cite{Matheron1989}. His results were established for a Euclidean space, but can be extended in a straightforward manner to any set of points. In particular:
\begin{itemize}
    \item The negative-type and triangle inequalities (Eqs. \ref{condneg} and \ref{triang}) hold for a madogram.

    \item Any madogram can be expressed as an integral mixture of indicator variograms.

    \item Any madogram is the limit of a sequence of functions proportional to indicator variograms. 
\end{itemize}

An interesting subclass of madograms is that of first-order stationary Gaussian random fields: 
\begin{proposition}[Matheron \citep{Matheron1989}]
\label{prop02}
    Let $\mathbb{X}$ be a set of points and $\gamma$ a variogram on $\mathbb{X} \times \mathbb{X}$. Then, the mapping $\gamma^{(1)}$ defined by:
    \begin{equation}
        \label{madog}
        \gamma^{(1)}(x,y) = \sqrt{\frac{\gamma(x,y)}{\pi}}, \quad x, y \in \mathbb{X},
    \end{equation}
    is the madogram of a first-order stationary Gaussian random field with variogram $\gamma$. 
\end{proposition}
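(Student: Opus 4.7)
The plan is to realize the madogram identity by an explicit calculation once the existence of an appropriate Gaussian random field is invoked. First, I would argue that since $\gamma$ is a variogram on $\mathbb{X} \times \mathbb{X}$, it is symmetric, vanishes on the diagonal and satisfies the negative-type inequalities \eqref{condneg}, so by standard Gaussian field theory there exists a centered Gaussian random field $Z$ on $\mathbb{X}$ whose variogram is $\gamma$ (for instance, via the Kolmogorov extension theorem applied to the finite-dimensional Gaussian laws determined by $\gamma$, up to an arbitrary reference point). Being Gaussian, $Z$ is automatically first-order stationary, with whatever constant mean value we prescribe (for example $0$).

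Next, I would fix $x, y \in \mathbb{X}$ and set $W = Z(x) - Z(y)$. By construction $W$ is a centered Gaussian random variable with variance $\mathrm{var}(W) = 2 \gamma(x,y)$. Writing $\sigma = \sqrt{2\gamma(x,y)}$ for its standard deviation, the classical moment formula for the half-normal distribution gives
\begin{equation*}
\mathbb{E}\bigl\{\lvert W \rvert\bigr\} = \sigma \sqrt{\tfrac{2}{\pi}} = 2 \sqrt{\tfrac{\gamma(x,y)}{\pi}}.
\end{equation*}
Dividing by $2$ then yields $\gamma^{(1)}(x,y) = \sqrt{\gamma(x,y)/\pi}$, which is exactly the claimed identity.

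There is really no serious obstacle here: the content of the proposition is a direct computation combined with the fact that every variogram is the variogram of some Gaussian random field. The only point that might merit a brief comment is the existence step, since in full generality $\mathbb{X}$ may be an abstract set; however, this is handled uniformly by constructing $Z$ pointwise as $Z(x) = Y(x) - Y(x_0)$ for a fixed reference point $x_0 \in \mathbb{X}$ and a centered Gaussian process $Y$ whose covariance $K(x,y) = \gamma(x,x_0) + \gamma(y,x_0) - \gamma(x,y)$ is positive semidefinite (a standard consequence of conditional negative semidefiniteness). Once this construction is in place, the half-normal moment computation closes the argument.
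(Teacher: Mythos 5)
Your proof is correct: the existence step via the positive semidefiniteness of $K(x,y)=\gamma(x,x_0)+\gamma(y,x_0)-\gamma(x,y)$ plus Kolmogorov extension is sound, and the half-normal moment computation $\mathbb{E}\{|W|\}=\sigma\sqrt{2/\pi}$ with $\sigma^2=2\gamma(x,y)$ gives exactly $\gamma^{(1)}=\sqrt{\gamma/\pi}$. The paper does not supply its own proof of this proposition (it is quoted from Matheron, 1989), and your argument is precisely the standard one that the citation presupposes, consistent with the more general formula in Proposition~\ref{prop02b} specialized to $\alpha=1$.
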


\section{Main Results}
\label{sec3}

Throughout this section, $\mathbb{X}$ denotes a set of points, not necessarily provided with a metric. We begin with a definition from Matheron \citep{matheron1976}. 

\begin{definition}
    A bivariate distribution is \emph{Hermitian} if it is a mixture of bivariate Gaussian distributions with standard Gaussian marginals. Such a bivariate distribution has an isofactorial representation involving the normalized Hermite polynomials. 
\end{definition}

By extension, we will say that a random field is \emph{Hermitian} if all its bivariate distributions are Hermitian.

\subsection{Characterization of Indicator Variograms}

\begin{proposition}
\label{closedconvex}
The set of indicator variograms is a closed convex subset of the set of real-valued functions defined on $\mathbb{X} \times \mathbb{X}$ endowed with the topology of simple convergence. 
\end{proposition}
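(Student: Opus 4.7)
The plan is to prove convexity and closedness separately. For convexity, given two first-order stationary indicator variograms $g_1$ and $g_2$ realized by indicator random fields $\mathsf{1}_{A_1}$ and $\mathsf{1}_{A_2}$, and $\alpha \in [0,1]$, I would build a mixture: introduce a Bernoulli variable $B$ with $P(B=1) = \alpha$ independent of $(A_1, A_2)$, and set $A = A_1$ on $\{B=1\}$ and $A = A_2$ on $\{B=0\}$. Since $P(x \in A) = \alpha P(x \in A_1) + (1-\alpha) P(x \in A_2)$ is constant in $x$, the mixture is first-order stationary; conditioning on $B$ inside $\mathbb{E}\{[\mathsf{1}_A(x) - \mathsf{1}_A(y)]^2\}$ then immediately yields variogram $\alpha g_1 + (1-\alpha) g_2$.

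For closedness, let $(g_\iota)_{\iota \in I}$ be a net of first-order stationary indicator variograms converging simply to some $g$, and let $\mu_\iota$ denote the law on $\Omega = \{0,1\}^{\mathbb{X}}$ of an indicator random field realizing $g_\iota$. The product space $\Omega$ is compact Hausdorff by Tychonoff's theorem, so the set of Borel probability measures on $\Omega$ is weak-$*$ compact inside the dual of $C(\Omega)$. I would extract a weak-$*$ convergent subnet $\mu_\kappa \to \mu$ and exploit the fact that cylinder events on $\Omega$ are clopen, hence their indicators are continuous, so that every finite-dimensional distribution converges along the subnet. Letting $X$ be the coordinate random field under $\mu$, first-order stationarity passes to the limit because $P(X_\kappa(x)=1)$ does not depend on $x$ for each $\kappa$, so neither can its limit. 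Finally, $g_\kappa(x,y)$ is an explicit linear functional of the bivariate distribution of $(X_\kappa(x),X_\kappa(y))$, so the simple limit $g$ must coincide with the variogram of $X$.

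The main obstacle is that $\mathbb{X}$ may be uncountable, in which case the topology of simple convergence on $\mathbb{R}^{\mathbb{X}\times\mathbb{X}}$ is not metrizable and sequential arguments are insufficient; one must work with nets throughout and invoke compactness in a non-metric setting. Using Tychonoff and Banach-Alaoglu rather than a Prokhorov-style tightness argument on a Polish space sidesteps this difficulty, at the price of a slightly heavier functional-analytic framework.
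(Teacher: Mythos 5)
Your proof is correct. The convexity half is essentially the paper's own argument (a Bernoulli mixture of the two indicator random fields; the paper does the same with countably many components, which is more than convexity requires). The closedness half, however, takes a genuinely different route. The paper disposes of closedness in one line: each $g_n$ satisfies the corner-positive inequalities (\ref{corner}), these are finitely many-point, non-strict linear inequalities and hence pass to the pointwise limit, and they are sufficient for realizability --- so the entire burden rests on the cited McMillan--Shepp--Quintanilla characterization. You instead give a self-contained compactness argument: push the laws onto $\{0,1\}^{\mathbb{X}}$, invoke Tychonoff and weak-$*$ compactness to extract a convergent subnet, use clopenness of cylinder sets to pass finite-dimensional distributions (hence first-order stationarity and the variogram) to the limit, and identify the limit field's variogram with $g$. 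What your route buys is independence from the sufficiency of the corner-positive inequalities, which in the literature is established only for $\mathbb{X}=\mathbb{Z}$, $\mathbb{R}$ or $\mathbb{R}^N$ while the proposition is stated for an abstract $\mathbb{X}$; the price is heavier functional analysis. One step you should make explicit: for uncountable $\mathbb{X}$ the law of an indicator random field is a priori defined only on the cylinder $\sigma$-algebra, which is strictly smaller than the Borel $\sigma$-algebra of the product topology, so to regard $\mu_\iota$ as an element of $C(\Omega)^{*}$ you must extend the associated positive linear functional from the algebra of continuous functions depending on finitely many coordinates (dense in $C(\Omega)$ by Stone--Weierstrass) to all of $C(\Omega)$ and then apply Riesz representation; this is routine but not automatic, and omitting it leaves a small technical gap rather than a conceptual one.
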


\begin{theorem}
\label{thm1}
    A mapping $g: \mathbb{X} \times \mathbb{X} \to [0,+\infty)$ is the variogram of a first-order stationary indicator random field on $\mathbb{X}$ if, and only if, it has the following representation:
    \begin{equation}
        \label{indicvariog}
        g(x,y) = \frac{1}{2\pi} \int_{0}^{+\infty} \arccos \rho_{\omega}(x,y) \, F({\rm d}\omega), \quad x,y \in \mathbb{X},
    \end{equation}
    where $F$ is a cumulative distribution function on $(0,+\infty)$ and, for all $\omega \in (0,+\infty)$, $\rho_{\omega}$ is a correlation function on $\mathbb{X} \times \mathbb{X}$. 
\end{theorem}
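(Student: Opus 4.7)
For the ``if'' direction, I would build a random field realizing the representation by direct construction. Take a random variable $\Omega$ with cumulative distribution function $F$ on $(0,+\infty)$, and conditionally on $\Omega = \omega$ let $Y_\omega$ be a centered unit-variance Gaussian random field on $\mathbb{X}$ with correlation function $\rho_\omega$. Set $I(x) = \mathbf{1}_{Y_\Omega(x) \geq 0}$. By symmetry of centered Gaussians, $\mathbb{E}[I(x)] = 1/2$ for every $x$, so $I$ is a first-order stationary indicator random field. Conditioning on $\Omega$ in $g(x,y) = \tfrac{1}{2}P(I(x) \neq I(y))$ and applying Sheppard's orthant formula $P(\mathrm{sign}\,Y_\omega(x) \neq \mathrm{sign}\,Y_\omega(y) \mid \Omega = \omega) = \pi^{-1}\arccos\rho_\omega(x,y)$ recovers exactly (\ref{indicvariog}). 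The latent $Y_\Omega$ is a (conditionally Gaussian) Hermitian random field in the sense of the Definition preceding the theorem, with a single global mixing measure $F$.

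For the ``only if'' direction, let $I$ be a first-order stationary indicator random field with variogram $g$ and mean $p$. A preliminary reduction to the case $p = 1/2$ is available by coupling $I$ with an independent $\xi \sim \mathrm{Bernoulli}(1/2)$ and setting $I'(x) = I(x)(1-\xi) + (1 - I(x))\xi$; the field $I'$ is still an indicator random field, has mean $1/2$, and its variogram coincides with $g$ because $(I'(x) - I'(y))^2 = (I(x) - I(y))^2$ pointwise, regardless of $\xi$. Writing $J = 2I' - 1$ for the centered $\pm 1$-valued field and using $1 - \tfrac{2}{\pi}\arccos t = \tfrac{2}{\pi}\arcsin t$, the target identity (\ref{indicvariog}) becomes
\begin{equation*}
\mathbb{E}[J(x)\,J(y)] \;=\; \frac{2}{\pi}\int_0^{+\infty}\arcsin\rho_\omega(x,y)\,F(\mathrm{d}\omega),
\end{equation*}
so the task is to express the correlation function of any mean-$0$, $\pm 1$-valued random field as an arcsine mixture over correlation functions $\rho_\omega$.

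My strategy is to construct a conditionally Gaussian latent random field $Y$ (with a single mixing variable $\Omega \sim F$ and, given $\Omega = \omega$, Gaussian correlation $\rho_\omega$) such that $I'$ and $\mathbf{1}_{Y \geq 0}$ agree in two-point distribution; the desired $F$ and $\rho_\omega$ are then the ones appearing in this latent mixture, and (\ref{indicvariog}) follows from Sheppard's formula as in the ``if'' direction. I would first prove existence of this latent structure on every finite subset of $\mathbb{X}$, then extend to the whole space via Kolmogorov-style consistency together with the closed convex structure of the set of indicator variograms (Proposition \ref{closedconvex}) and the compactness of the range $[-1,1]$ of the correlation functions $\rho_\omega$.

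\textbf{Main obstacle.} The crux is the finite-dimensional statement: on $n$ points $x_1,\ldots,x_n$, the correlation matrix of $(J(x_1), \ldots, J(x_n))$ must be written as a mixture of matrices of the form $(\tfrac{2}{\pi}\arcsin R_{ij})_{i,j}$ with $R$ a correlation matrix. A single atom $F = \delta_{\omega_0}$ does not suffice in general, because the pointwise inverse $R_{ij} = \sin(\tfrac{\pi}{2}\rho_I(x_i,x_j))$ need not be positive semidefinite even when $\rho_I$ is itself the correlation function of a mean-$0$, $\pm 1$-valued random field: for instance, the equicorrelated $4\times 4$ matrix with off-diagonal $-1/3$ (which is PSD and arises from the uniform distribution on sign vectors summing to zero) is mapped to the matrix with off-diagonal $\sin(-\pi/6) = -1/2$, whose all-ones eigenvector has eigenvalue $-1/2$. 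A genuine mixture of several $\rho_\omega$ is therefore unavoidable, and establishing its existence is the heart of the theorem; I would attempt a Choquet-type argument on the closed convex set of mean-$1/2$ indicator variograms from Proposition \ref{closedconvex}, identifying the median Gaussian variograms $\tfrac{1}{2\pi}\arccos\rho$ of Proposition \ref{prop01} as the generating extreme elements.
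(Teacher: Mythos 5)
Your ``if'' direction is correct and is essentially the paper's sufficiency argument: threshold a conditionally Gaussian field at zero and apply the orthant formula of Proposition \ref{prop01}. The gap is in the ``only if'' direction, which you correctly identify as the heart of the matter but do not actually prove. You reduce it to writing the correlation of the $\pm 1$-valued field $J$ as an arcsine mixture, observe (correctly, via your $4\times 4$ equicorrelated example) that a single latent Gaussian field cannot work, and then only sketch a Choquet-type programme on the convex set of Proposition \ref{closedconvex}. That programme is not carried out, and as stated it is circular relative to the paper's logic: the identification of the median Gaussian indicator variograms as the generating elements (Corollary \ref{hull}) is \emph{deduced from} Theorem \ref{thm1}, not available beforehand; moreover, a closed-convex-hull statement does not by itself yield an integral representation of the form (\ref{indicvariog}) without a Choquet--Bishop--de~Leeuw argument on a compact that is non-metrizable for general $\mathbb{X}$, together with an identification of the extreme points --- none of which you supply.

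The missing idea is far more elementary and sidesteps your obstruction entirely: the mixing index $\omega$ should range over the \emph{sample space of the indicator field itself}, not over a family of latent Gaussian fields. Writing $\widetilde Z = 2Z-1$ and, for each fixed realization $\omega$, setting $\rho_\omega(x,y)=\widetilde Z(x,\omega)\,\widetilde Z(y,\omega)$, one obtains a rank-one positive semidefinite mapping with unit diagonal, hence a correlation function in the sense required by the theorem --- it need not be the correlation of any Gaussian field, which is where your search for a PSD matrix $R_{ij}=\sin(\tfrac{\pi}{2}\rho_I(x_i,x_j))$ goes astray. Since $\rho_\omega$ takes only the values $\pm 1$, one has $\arccos\rho_\omega=\tfrac{\pi}{2}(1-\rho_\omega)$ exactly, whence $\tfrac{1}{2\pi}\int_{\Omega}\arccos\rho_\omega(x,y)\,P({\rm d}\omega)=\tfrac{1}{4}\bigl(1-\mathbb{E}[\widetilde Z(x)\widetilde Z(y)]\bigr)=g(x,y)$, first-order stationarity being what allows the variogram to be written as an expected squared difference. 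Separability then lets one take $\Omega=(0,+\infty)$ and $P=F$. Your preliminary symmetrization to mean $\tfrac12$, while harmless, is unnecessary for this argument.
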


\begin{corollary}
\label{hull}
    The set of indicator variograms is the closed convex hull of the set of median indicator variograms of Gaussian random fields. 
\end{corollary}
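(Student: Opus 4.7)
My plan is to deduce the corollary by combining Proposition~\ref{prop01}, Proposition~\ref{closedconvex}, and Theorem~\ref{thm1}. Let $\mathcal{M}$ denote the set of median indicator variograms of Gaussian random fields on $\mathbb{X}$ (equivalently, by Proposition~\ref{prop01}, the maps of the form $\frac{1}{2\pi}\arccos\rho$ with $\rho$ a correlation function on $\mathbb{X}\times\mathbb{X}$), and let $\mathcal{V}_I$ denote the set of first-order stationary indicator variograms on $\mathbb{X}$. The inclusion $\overline{\mathrm{conv}}(\mathcal{M})\subseteq\mathcal{V}_I$ is immediate: Proposition~\ref{prop01} gives $\mathcal{M}\subseteq\mathcal{V}_I$, and Proposition~\ref{closedconvex} states that $\mathcal{V}_I$ is closed and convex in the topology of simple convergence, and hence contains the closed convex hull of every subset.

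For the reverse inclusion, I would start from the representation provided by Theorem~\ref{thm1}: for any $g\in\mathcal{V}_I$ one may write $g(x,y)=\int_{0}^{+\infty}\frac{1}{2\pi}\arccos\rho_\omega(x,y)\,F(\mathrm{d}\omega)$, with $F$ a probability distribution on $(0,+\infty)$ and $\rho_\omega$ a correlation function for every $\omega$. A basic neighborhood of $g$ in the topology of simple convergence is specified by finitely many pairs $(x_1,y_1),\ldots,(x_K,y_K)$ and a tolerance $\varepsilon>0$, so it suffices to produce, for each such datum, a finite convex combination of elements of $\mathcal{M}$ that approximates $g$ to within $\varepsilon$ at every listed pair.

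To build it, I would consider the bounded measurable map $\Phi\colon(0,+\infty)\to[0,\pi]^{K}$ defined by $\Phi(\omega)=\bigl(\arccos\rho_\omega(x_i,y_i)\bigr)_{i=1}^{K}$, partition the compact cube $[0,\pi]^{K}$ into finitely many Borel cells $A_1,\ldots,A_M$ of supremum-norm diameter smaller than $2\pi\varepsilon$, set $B_j=\Phi^{-1}(A_j)$ and $p_j=F(B_j)$, and choose a representative $\omega_j^\star\in B_j$ whenever $p_j>0$. Then
\[
\tilde g(x,y)=\sum_{j:\,p_j>0} p_j\cdot\frac{1}{2\pi}\arccos\rho_{\omega_j^\star}(x,y)
\]
is a convex combination of elements of $\mathcal{M}$, because $\sum_j p_j=F\bigl((0,+\infty)\bigr)=1$. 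Splitting the integral representation of $g(x_i,y_i)$ across the sets $B_j$ and using the diameter bound on each $A_j$ yields $|\tilde g(x_i,y_i)-g(x_i,y_i)|\leq\varepsilon$ for every $i\leq K$, placing $\tilde g$ in the prescribed neighborhood.

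The main obstacle is conceptual rather than computational: one must approximate an abstract integral mixture by finite convex combinations in the rather coarse topology of simple convergence, without any regularity hypothesis on the family $\{\rho_\omega\}_{\omega>0}$. The resolution is the observation that simple convergence demands control at only finitely many pairs at a time, and that this control can be extracted from the compactness of $[0,\pi]^{K}$ (pulled back along $\Phi$) rather than from any continuity in $\omega$.
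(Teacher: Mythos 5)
Your proof is correct and follows the same route as the paper, which simply cites Theorem~\ref{thm1}, Proposition~\ref{prop01} and Proposition~\ref{closedconvex}; you have merely made explicit the finite-mixture approximation of the integral representation that the paper leaves implicit. The details you supply (simple-function approximation of the bounded integrand at finitely many pairs, using the measurability of $\omega\mapsto\rho_\omega(x_i,y_i)$ already implicit in Theorem~\ref{thm1}) are sound.
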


\begin{remark}
    Martins de Carvalho and Clark \citep{Martins1983} showed that, in the Euclidean space, it is not true that the set of stationary indicator variograms is the closed convex hull of the set of stationary median indicator variograms of Gaussian random fields, i.e., when the mappings $g$ and $\rho_{\omega}$ in Eq. (\ref{indicvariog}) are functions of $x-y$ instead of $(x,y)$. However, the statement becomes true when lifting the assumption of stationarity. Even if the mapping $g$ is stationary, the representation (\ref{indicvariog}) may require the mappings $\rho_{\omega}$ to be non-stationary. 
\end{remark}

\begin{remark}
\label{rem2}
    An equivalent formulation of Theorem \ref{thm1} is the following: A mapping $g: \mathbb{X} \times \mathbb{X} \to [0,+\infty)$ is the variogram of a first-order stationary indicator random field on $\mathbb{X}$ if, and only if, it has the following representation:
    \begin{equation}
        g(x,y) = \frac{1}{\pi} \int_{0}^{+\infty} \arcsin \sqrt{\frac{\gamma_{\omega}(x,y)}{2}} \, F({\rm d}\omega), \quad x, y \in \mathbb{X},
    \end{equation}
    where $F$ is a cumulative distribution function on $(0,+\infty)$ and, for all $\omega \in (0,+\infty)$, $\gamma_{\omega}$ is the variogram of a standard Gaussian random field on $\mathbb{X}$. 
\end{remark}

\begin{corollary}[Existence theorem for indicator variograms]
\label{cor01}
    The variogram of a first-order stationary indicator random field is the median indicator variogram of a Hermitian random field. 
\end{corollary}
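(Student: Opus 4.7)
My plan is to leverage the integral representation provided by Theorem \ref{thm1} and then exhibit a single Hermitian random field whose median indicator variogram realises exactly that representation. First I would apply Theorem \ref{thm1} to the given $g$: there exist a cumulative distribution function $F$ on $(0,+\infty)$ and a family of correlation functions $(\rho_\omega)_{\omega > 0}$ on $\mathbb{X} \times \mathbb{X}$ such that
\begin{equation*}
g(x,y) = \frac{1}{2\pi} \int_{0}^{+\infty} \arccos \rho_\omega(x,y) \, F({\rm d}\omega), \quad x,y \in \mathbb{X}.
\end{equation*}

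Next I would build the candidate Hermitian random field $Z$ as a hierarchical mixture. Introduce an auxiliary random variable $W \sim F$, and, conditionally on $\{W = \omega\}$, define $Z$ to be a centred Gaussian random field on $\mathbb{X}$ with unit variance and correlation function $\rho_\omega$, whose existence follows from Kolmogorov's extension theorem applied to the consistent family of multivariate Gaussian laws on finite subsets of $\mathbb{X}$. Marginalising over $W$ turns every bivariate distribution of $Z$ into a mixture of bivariate Gaussians with standard Gaussian marginals, so $Z$ is Hermitian by definition.

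To conclude I would evaluate the median indicator variogram of $Z$. Since each conditional marginal $Z(x) \mid W=\omega$ is standard Gaussian, the unconditional marginal of $Z(x)$ is standard Gaussian as well, so the median of $Z(x)$ equals $0$ and the median indicator field $I(x) = \mathsf{1}_{Z(x) \geq 0}$ has constant mean $1/2$, i.e., it is first-order stationary. Conditioning on $W$ and invoking Proposition \ref{prop01} yields that the conditional variogram of $I$ given $\{W=\omega\}$ equals $\arccos \rho_\omega(x,y)/(2\pi)$; averaging over $F$ via the tower property recovers $g(x,y)$. The delicate point I expect to face is a measurability issue in the second step: producing $Z$ as a genuine random field requires the map $\omega \mapsto \rho_\omega$ to be regular enough that Kolmogorov's construction furnishes a bona fide Markov kernel from $W$ to random field laws. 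This can be discharged by noting that only bivariate distributions are needed both to define the Hermitian property and to compute the median indicator variogram, so it suffices to construct $Z$ at the level of pairs of points.
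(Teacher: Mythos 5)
Your construction is exactly the random field $Y_T$ already built in the sufficiency part of the proof of Theorem \ref{thm1} ($T\sim F$, conditionally Gaussian with correlation $\rho_T$), which is precisely what the paper's one-line proof of this corollary invokes. The proposal is correct and takes essentially the same approach as the paper, just spelled out in more detail.
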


\begin{proposition}
\label{thm3}
    Let $g, g^\prime: \mathbb{X} \times \mathbb{X} \to [0,+\infty)$ be indicator variograms. Then:
    \begin{enumerate}
    \item For any $\varpi \in [0,1]$, $\varpi \, g$ is an indicator variogram.
    \item For any $\varpi \in [0,1]$, $\varpi \, g + (1-\varpi) \, g^\prime$ is an indicator variogram.
    \item $g+g'-4 \, g \, g^\prime$ is an indicator variogram. 
    \end{enumerate}
\end{proposition}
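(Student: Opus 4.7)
The plan is to exhibit, for each of the three claims, an explicit first-order stationary indicator random field whose variogram coincides with the target expression. Throughout, let $A, A'$ be independent random sets on $\mathbb{X}$ with first-order stationary indicator variograms $g$ and $g'$, and let $p, p'$ denote the (constant) means of $\mathsf{1}_A$ and $\mathsf{1}_{A'}$.

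For Parts (1) and (2), I would rely on Proposition \ref{closedconvex}: the set of indicator variograms is convex, and the zero mapping is an indicator variogram (corresponding to the degenerate random set $A = \emptyset$, which has constant mean $0$). Part (1) then reduces to Part (2) by setting $g' \equiv 0$. Alternatively, a direct construction makes (2) transparent: let $B$ be a Bernoulli($\varpi$) variable independent of $(A, A')$ and define $C = A$ if $B = 1$, $C = A'$ if $B = 0$. Then $\mathsf{1}_C = B \mathsf{1}_A + (1-B) \mathsf{1}_{A'}$, so the mean $\varpi p + (1-\varpi) p'$ is constant and
\begin{equation*}
\tfrac{1}{2} \mathbb{E}[(\mathsf{1}_C(x) - \mathsf{1}_C(y))^2] = \varpi \, g(x,y) + (1-\varpi) \, g'(x,y),
\end{equation*}
using the fact that $B^2 = B$ and $(1-B)^2 = 1-B$ are mutually exclusive.

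Part (3) is the interesting claim. I would consider the symmetric difference $A \triangle A' = (A \setminus A') \cup (A' \setminus A)$, whose indicator is
\begin{equation*}
\mathsf{1}_{A \triangle A'} = \mathsf{1}_A + \mathsf{1}_{A'} - 2 \, \mathsf{1}_A \, \mathsf{1}_{A'}.
\end{equation*}
By independence, its mean equals $p + p' - 2 p p'$, which is constant on $\mathbb{X}$, so first-order stationarity is preserved. For the variogram, note that $\mathsf{1}_{A \triangle A'}(x) \neq \mathsf{1}_{A \triangle A'}(y)$ if and only if exactly one of the events $E = \{\mathsf{1}_A(x) \neq \mathsf{1}_A(y)\}$ and $E' = \{\mathsf{1}_{A'}(x) \neq \mathsf{1}_{A'}(y)\}$ occurs. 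Since $\mathbb{P}(E) = 2g(x,y)$, $\mathbb{P}(E') = 2g'(x,y)$, and $E, E'$ are independent, the probability of this symmetric-difference event equals
\begin{equation*}
2g(1-2g') + (1-2g) \, 2g' = 2g + 2g' - 8 \, g \, g'.
\end{equation*}
Dividing by two yields the variogram $g + g' - 4 \, g \, g'$, as required.

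The main obstacle is recognizing the algebraic form appearing in (3) as the variogram of a symmetric difference of two independent random sets; once that identification is made, the computation reduces to a one-line XOR argument using independence. A useful sanity check is that the resulting expression automatically inherits the upper-bound inequality $g + g' - 4 \, g \, g' \le \tfrac{1}{2}$ and the triangle inequality from the fact that it has been realized as an honest indicator variogram, so no additional verification is needed.
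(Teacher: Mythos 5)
Your proposal is correct and follows essentially the same route as the paper: parts (1) and (2) via mixing with the degenerate (empty-set) field and with an independent copy, and part (3) via a product/XOR construction. Your symmetric difference $A \triangle A'$ is, up to complementation and the paper's extra random sign flip (used there only to force the mean to $\frac{1}{2}$, where you instead note the mean $p+p'-2pp'$ is already constant), exactly the paper's field $\frac{1+\widetilde{Z}\widetilde{Z}^\prime}{2}$, since $\mathsf{1}_{A \triangle A'} = \frac{1-(2\mathsf{1}_A-1)(2\mathsf{1}_{A'}-1)}{2}$.
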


\begin{remark}
Owing to the first statement in Proposition \ref{thm3}, the cumulative distribution function $F$ in Theorem \ref{thm1} can be replaced by a non-decreasing function with values in $[0,\varpi]$ (with $0\leq \varpi \leq 1$) instead of $[0,1]$. 
\end{remark}

\begin{theorem}
\label{g2exp}
    If $g$ is an indicator variogram, so is $\frac{\varpi}{4}(1-\exp(-t g))$ for any $t > 0$ and any $\varpi \in [0,1]$. 
\end{theorem}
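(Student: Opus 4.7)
The plan is to realize $\tfrac{1}{4}(1-e^{-tg})$ as a Poisson mixture of indicator variograms obtained by iterating the third operation in Proposition \ref{thm3}, and then to scale by $\varpi$ using the first operation.

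First I would define a sequence $(h_n)_{n\geq 0}$ of functions on $\mathbb{X}\times\mathbb{X}$ by $h_0\equiv 0$ and $h_{n+1}=h_n+g-4\,h_n\,g$. The constant function $0$ is the variogram of the trivial (degenerate) indicator random field, hence an indicator variogram. Assuming $h_n$ is an indicator variogram, Proposition \ref{thm3}(3) applied to $h_n$ and $g$ shows that $h_{n+1}$ is an indicator variogram, so by induction each $h_n$ is an indicator variogram. The recurrence is affine in $h_n$ with fixed point $\tfrac{1}{4}$, i.e.
\begin{equation*}
h_{n+1}-\tfrac{1}{4}=(1-4g)\bigl(h_n-\tfrac{1}{4}\bigr),
\end{equation*}
which immediately yields the closed form $h_n=\tfrac{1}{4}\bigl(1-(1-4g)^n\bigr)$.

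Next I would mix the $h_n$'s with Poisson weights. Let $t>0$ and let $(p_n)_{n\geq 0}$ with $p_n=e^{-t/4}(t/4)^n/n!$ be the Poisson$(t/4)$ probability mass function. Since $|1-4g|\leq 1$ by the upper-bound inequality (\ref{upperbound}), the series $\sum_{n\geq 0}p_n(1-4g)^n$ converges pointwise and equals $e^{-t/4}\exp\!\bigl(\tfrac{t}{4}(1-4g)\bigr)=e^{-tg}$. Therefore
\begin{equation*}
\sum_{n=0}^{\infty}p_n\,h_n(x,y)=\tfrac{1}{4}\bigl(1-e^{-tg(x,y)}\bigr),\qquad x,y\in\mathbb{X}.
\end{equation*}

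It remains to argue that this countable convex combination is still an indicator variogram. For each $N\geq 1$, the partial sum $S_N=\sum_{n=0}^{N}p_n h_n+\bigl(1-\sum_{n=0}^{N}p_n\bigr)h_0$ is a finite convex combination of indicator variograms, hence an indicator variogram by iterating Proposition \ref{thm3}(2). As $N\to\infty$, $S_N$ converges pointwise to $\tfrac{1}{4}(1-e^{-tg})$, and Proposition \ref{closedconvex} (closure under simple convergence) delivers the latter as an indicator variogram. Finally, Proposition \ref{thm3}(1) yields that $\tfrac{\varpi}{4}(1-e^{-tg})$ is an indicator variogram for every $\varpi\in[0,1]$.

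The only delicate step is the last one: passing from finite convex combinations of the $h_n$ to the infinite Poisson mixture. This is where Proposition \ref{closedconvex} is essential, and also why it matters that $h_0=0$ is itself an indicator variogram (so that probability mass may be ``parked'' there in the truncation $S_N$).
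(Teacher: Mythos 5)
Your proof is correct, and it arrives at exactly the same decomposition as the paper: the Poisson mixture of the functions $\frac{1}{4}\bigl(1-(1-4g)^n\bigr)$, whose generating identity $\sum_n p_n (1-4g)^n = e^{-tg}$ is the heart of both arguments. The difference is one of execution. The paper works at the level of random fields: it symmetrizes $Z$ into a Rademacher field with covariance $\widetilde{C}=1-4g$, takes the product of $K$ independent copies with $K$ Poisson, and reads off $\exp(-4tg)$ as the covariance of the resulting field, so it exhibits an explicit indicator random field realizing the variogram. You instead stay at the level of the variogram functions and assemble the result entirely from the already-established closure properties: Proposition \ref{thm3}(3) iterated to get $h_n=\frac{1}{4}(1-(1-4g)^n)$ (your $h_n$ is precisely the variogram of the paper's $(1+\widetilde{Z}_n)/2$), Proposition \ref{thm3}(2) for finite mixtures, and the closedness part of Proposition \ref{closedconvex} to pass to the countable Poisson mixture. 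Your version is more modular and reuses the stated propositions as black boxes; the paper's version is self-contained and has the practical advantage of yielding the simulation algorithm described in Section \ref{sec:discussion}. One small remark: the convexity part of Proposition \ref{closedconvex} is already stated for countable mixtures in the paper's proof, so your truncation-plus-closedness step, while perfectly valid, could be shortened to a single appeal to that proposition.
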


\begin{theorem}
\label{thm4}
    Let $g: \mathbb{X} \times \mathbb{X} \to [0,+\infty)$ be the variogram of a first-order stationary indicator random field on $\mathbb{X}$. Then, for any positive integer $n$ and any $x_1,\ldots,x_n \in \mathbb{X}$, it fulfills the following inequalities:
    \begin{enumerate}
        
        \item Negative-type inequalities (\ref{condneg}).

        \item Upper-bound inequality (\ref{upperbound}).

        \item Polygonal inequalities \citep{Deza1961}: 
        \begin{equation}
        \label{polyg}
           \sum_{k=1}^{\lfloor \frac{n}{2} \rfloor} \sum_{\ell=\lfloor \frac{n}{2} \rfloor+1}^n g(x_k,x_\ell) \geq \sum_{1 \leq k<m \leq \lfloor \frac{n}{2} \rfloor} g(x_k,x_m) + \sum_{\lfloor \frac{n}{2} \rfloor<\ell<m\leq n}  g(x_\ell,x_m).
        \end{equation}
        A particular case is the triangle inequality (\ref{triang}) when $n=3$.
        
        \item Odd-clique inequalities \citep{Barahona1986}: for all $\lambda_1,\ldots,\lambda_n \in \{-1,0,1\}$ such that $\sum_{k=1}^n \lambda_k$ is odd,
        \begin{equation}
        \label{oddcl}
           \sum_{k=1}^n \sum_{\ell=1}^n \lambda_k \, \lambda_\ell \, g(x_k,x_\ell) \leq 0.
        \end{equation}
        These inequalities include Matheron's inequalities (\ref{1clique}).

        \item Hypermetric inequalities \citep{Deza1961, Kelly1975}: for all $\lambda_1,\ldots,\lambda_n \in \mathbb{Z}$ such that $\sum_{k=1}^n \lambda_k = 1$,
        \begin{equation}
        \label{hyperm}
           \sum_{k=1}^n \sum_{\ell=1}^n \lambda_k \, \lambda_\ell \, g(x_k,x_\ell) \leq 0.
        \end{equation}

        \item Positive semidefinite inequalities \citep{Laurent1995}: for all $\lambda_1,\ldots,\lambda_n \in \mathbb{Z}$,
        \begin{equation}
        \label{psd}
           \sum_{k=1}^n \sum_{\ell=1}^n \lambda_k \, \lambda_\ell \, g(x_k,x_\ell) \leq \frac{1}{4} \sigma(\boldsymbol{\lambda})^2,
        \end{equation}
        where $\boldsymbol{\lambda} = (\lambda_1,\ldots,\lambda_n)$ and $\sigma(\boldsymbol{\lambda}) = \sum_{k=1}^n \lambda_k$.

        \item Rounded positive semidefinite inequalities \citep{Letchford2012}: for all $\lambda_1,\ldots,\lambda_n \in \mathbb{Z}$ such that $\sum_{k=1}^n \lambda_k$ is odd,
        \begin{equation}
        \label{roundedpsd}
           \sum_{k=1}^n \sum_{\ell=1}^n \lambda_k \, \lambda_\ell \, g(x_k,x_\ell) \leq \lfloor \frac{1}{4}  \sigma(\boldsymbol{\lambda})^2\rfloor.
        \end{equation}
        These inequalities include Shepp's inequalities (\ref{shepp63}).

         \item Gap inequalities \citep{Laurent1996}: for all $\lambda_1,\ldots,\lambda_n \in \mathbb{Z}$,
        \begin{equation}
        \label{gap}
           \sum_{k=1}^n \sum_{\ell=1}^n \lambda_k \, \lambda_\ell \, g(x_k,x_\ell) \leq \frac{1}{4} \left(\sigma(\boldsymbol{\lambda})^2 - \gamma(\boldsymbol{\lambda})^2 \right),
        \end{equation}
        where $\boldsymbol{\lambda} = (\lambda_1,\ldots,\lambda_n)$ and $\gamma(\boldsymbol{\lambda}) = \min \{ \lvert \boldsymbol{\lambda} \boldsymbol{z}^\top \rvert: \boldsymbol{z} \in \{-1,1\}^n \}$.
       
        \item Corner-positive inequalities (\ref{corner}). 

    \end{enumerate}

\end{theorem}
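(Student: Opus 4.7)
My plan is to exploit Corollary \ref{cor01}, which realizes any first-order stationary indicator variogram $g$ as $g(x,y) = \frac{1}{2} \mathbb{E}[(I(x) - I(y))^2] = \frac{1}{2} \mathbb{E}[|I(x) - I(y)|]$ for a $\{0,1\}$-valued random field $I$. By linearity of expectation, every one of the nine inequalities reduces to a pointwise check on the deterministic cut semi-metric $(x,y) \mapsto |I(x) - I(y)|$ associated with each realization of $I$, i.e.\ to a finite-dimensional combinatorial statement over $\{0,1\}^n$, or equivalently over $\{-1,1\}^n$ after setting $J = 2 I - 1$. Items (i) and (ii) are then immediate: the first follows from the fact that $g$ is a variogram (hence conditionally negative semidefinite, as recalled in Section \ref{sec2}), and the second from $(I(x) - I(y))^2 \leq 1$.

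For items (vi)--(ix), the workhorse is the identity $\mathbb{E}[J(x) J(y)] = 1 - 4 g(x,y)$, which gives
\begin{equation*}
    \mathbb{E}\left[\left(\sum_{k=1}^n \lambda_k J(x_k)\right)^2\right] = \sigma(\boldsymbol{\lambda})^2 - 4 \sum_{k=1}^n \sum_{\ell=1}^n \lambda_k \lambda_\ell \, g(x_k, x_\ell).
\end{equation*}
Nonnegativity of the left-hand side yields the positive semidefinite inequality (\ref{psd}). When $\sigma(\boldsymbol{\lambda})$ is odd, the integer-valued random variable $\sum_k \lambda_k J(x_k)$ is almost surely odd (since $J(x_k) \equiv 1 \pmod{2}$) and hence of absolute value at least $1$, promoting (\ref{psd}) to the rounded positive semidefinite inequality (\ref{roundedpsd}). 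The pointwise bound $|\sum_k \lambda_k J(x_k)| \geq \gamma(\boldsymbol{\lambda})$, immediate from the definition of $\gamma(\boldsymbol{\lambda})$, similarly yields the gap inequality (\ref{gap}). Finally, the corner-positive inequality (\ref{corner}) follows by writing $\sum_{k,\ell} \lambda_{k\ell} [1 - 4 g(x_k, x_\ell)] = \mathbb{E}[\sum_{k,\ell} \lambda_{k\ell} J(x_k) J(x_\ell)]$, which is nonnegative on each realization by corner-positivity applied with $\varepsilon = J \in \{-1,1\}^n$.

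For items (iii)--(v), I would revert to $I \in \{0,1\}$ and use the identity $|I(x) - I(y)| = I(x) + I(y) - 2 I(x) I(y)$, which yields the factorization $\sum_{k,\ell} \lambda_k \lambda_\ell \, |I(x_k) - I(x_\ell)| = 2 \, t (\sigma(\boldsymbol{\lambda}) - t)$ with $t = \sum_k \lambda_k I(x_k) \in \mathbb{Z}$. The hypermetric inequality (\ref{hyperm}) then reduces to $t(1 - t) \leq 0$ for integer $t$, which is immediate; Matheron's (\ref{1clique}) and the odd-clique inequality (\ref{oddcl}) follow from restricting $\lambda_k \in \{-1, 0, 1\}$ and analyzing the admissible integer values of $t$. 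For the polygonal inequality (\ref{polyg}), I would let $a$ and $b$ count the ones among $I(x_1),\ldots,I(x_{\lfloor n/2 \rfloor})$ and $I(x_{\lfloor n/2 \rfloor + 1}),\ldots,I(x_n)$, and then compute LHS $-$ RHS directly to obtain $(a - b)\bigl(a - b + \lceil n/2 \rceil - \lfloor n/2 \rfloor\bigr)$, a product of two consecutive integers, hence nonnegative.

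The main obstacle is not conceptual but the sheer diversity of the nine identities: once Corollary \ref{cor01} has reduced everything to a statement about cut semi-metrics, each inequality still requires its own combinatorial check, and several of them (polygonal, hypermetric, rounded PSD, gap) are classical facets of the cut cone whose verifications can be imported wholesale from the distance-geometry literature \citep{Deza1961, Kelly1975, Barahona1986, Laurent1995, Laurent1996, Letchford2012}. The unifying role of the representation $2 g = \mathbb{E}[|I(\cdot) - I(\cdot)|]$ is what turns a collection of nine disparate statements into nine instances of a single principle, at the cost of carrying through the case-specific elementary inequalities outlined above.
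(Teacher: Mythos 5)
Your overall strategy coincides with the paper's. The paper proves the gap inequality (\ref{gap}) by exactly your mechanism---passing to a $\{-1,1\}$-valued field whose noncentred covariance is $1-4g$ and bounding $\bigl(\sum_k\lambda_k J(x_k)\bigr)^2$ pointwise below by $\gamma(\boldsymbol{\lambda})^2$ before taking expectations---and then disposes of items (iii)--(vii) by invoking the fact that they are consequences of the gap inequalities, and of item (ix) by citing Quintanilla's characterization. You instead verify each item directly from the cut decomposition $\sum_{k,\ell}\lambda_k\lambda_\ell\,\lvert I(x_k)-I(x_\ell)\rvert = 2t\bigl(\sigma(\boldsymbol{\lambda})-t\bigr)$ with $t=\sum_k\lambda_k I(x_k)\in\mathbb{Z}$; this is more self-contained, and your arguments for the negative-type, upper-bound, polygonal, hypermetric, positive semidefinite, rounded positive semidefinite, gap and corner-positive inequalities are all correct. (A small point in your favour: the identity $\mathbb{E}[J(x)J(y)]=1-4g(x,y)$ does hold without the randomized symmetrization $\widetilde{Z}$ used in the paper, because the marginal mean cancels; and attributing the representation $2g=\mathbb{E}\lvert I(\cdot)-I(\cdot)\rvert$ to Corollary \ref{cor01} is unnecessary, as it is just the definition of the variogram of a first-order stationary indicator field.)

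The one step that fails is your treatment of the odd-clique inequalities (\ref{oddcl}). For $\lvert\sigma(\boldsymbol{\lambda})\rvert\geq 3$ the pointwise bound $t\bigl(\sigma(\boldsymbol{\lambda})-t\bigr)\leq 0$ is false (take $\sigma=3$, $t=1$), and no ``analysis of the admissible integer values of $t$'' can rescue it: an i.i.d.\ Bernoulli$(\tfrac12)$ indicator field has $g(x_k,x_\ell)=\tfrac14$ for $k\neq\ell$, and with $\lambda_1=\lambda_2=\lambda_3=1$ one gets $\sum_{k,\ell}\lambda_k\lambda_\ell\, g(x_k,x_\ell)=\tfrac32>0$. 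Hence (\ref{oddcl}) as displayed can only be established when $\sigma(\boldsymbol{\lambda})=\pm 1$, i.e.\ in Matheron's case (\ref{1clique}), which your $t(1-t)\leq 0$ argument does cover. Be aware that the paper's own route does not close this item either: when $\sigma(\boldsymbol{\lambda})$ is odd the gap inequality only yields the upper bound $\tfrac14\bigl(\sigma(\boldsymbol{\lambda})^2-1\bigr)$, i.e.\ the rounded positive semidefinite bound, not $0$. The problem therefore appears to lie in the statement of item 4 rather than in your argument alone, but as written your derivation of (\ref{oddcl}) is a genuine gap and should either be restricted to $\sigma(\boldsymbol{\lambda})=\pm1$ or replaced by the correct form of the odd-clique inequality.
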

    
\begin{remark}
    The gap inequalities (\ref{gap}) are difficult to check in practice, as the calculation of the gap $\gamma(\boldsymbol{\lambda})$ is NP-hard \citep{Laurent1995}. The weaker rounded positive semidefinite inequalities, which imply inequalities (\ref{polyg}) to (\ref{psd}), are easier to check. 
\end{remark}

\subsection{Characterization of Madograms}

\begin{theorem}
\label{thm2}
    A mapping $\gamma^{(1)}: \mathbb{X} \times \mathbb{X} \to [0,+\infty)$ is the madogram of a random field on $\mathbb{X}$ if, and only if, it has the following representation:
    \begin{equation}
        \label{madogram}
        \gamma^{(1)}(x,y) = \int_{0}^{+\infty} \sqrt{\gamma_{\omega}(x,y)} \, F({\rm d}\omega),
    \end{equation}
    where $F$ is a cumulative distribution function on $(0,+\infty)$ and, for all $\omega \in (0,+\infty)$, $\gamma_{\omega}$ is a variogram on $\mathbb{X} \times \mathbb{X}$. 
\end{theorem}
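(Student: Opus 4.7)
The plan is to prove the two implications separately. Sufficiency will follow from a Gaussian scale-mixture construction, and necessity from the pointwise observation that every realization of $Z$ carries its own deterministic variogram.

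For sufficiency, suppose $\gamma^{(1)}$ admits the representation (\ref{madogram}). On a suitably large probability space, let $W$ be a random variable with cumulative distribution function $F$ on $(0,+\infty)$ and, independently of $W$, let $\{Y^{(\omega)}\}_{\omega\in(0,+\infty)}$ be a measurable family of centred Gaussian random fields on $\mathbb{X}$ with $Y^{(\omega)}$ having variogram $\pi\gamma_\omega$. By Proposition~\ref{prop02}, $\mathbb{E}\lvert Y^{(\omega)}(x)-Y^{(\omega)}(y)\rvert=2\sqrt{\gamma_\omega(x,y)}$ for every $\omega$. Setting $Z:=Y^{(W)}$ and conditioning on $W$ yields
\[\tfrac12\,\mathbb{E}\lvert Z(x)-Z(y)\rvert=\int_0^{+\infty}\sqrt{\gamma_\omega(x,y)}\,F(d\omega)=\gamma^{(1)}(x,y),\]
so $\gamma^{(1)}$ is the madogram of $Z$.

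For necessity, let $(\Omega,\mathcal{F},P)$ be the probability space on which $Z$ is defined, and, for each $\omega\in\Omega$, set $\gamma_\omega(x,y):=\tfrac14\bigl(Z(x,\omega)-Z(y,\omega)\bigr)^2$. This map is symmetric, non-negative, and vanishes on the diagonal; moreover, for any $x_1,\dots,x_n\in\mathbb{X}$ and any $\lambda_1,\dots,\lambda_n\in\mathbb{R}$ with $\sum_k\lambda_k=0$, a direct expansion gives
\[\sum_{k=1}^n\sum_{\ell=1}^n\lambda_k\,\lambda_\ell\,\gamma_\omega(x_k,x_\ell)=-\tfrac12\Bigl(\sum_{k=1}^n\lambda_k\,Z(x_k,\omega)\Bigr)^2\le 0.\]
Thus $\gamma_\omega$ is conditionally negative semidefinite, and therefore a variogram by the Schoenberg characterization recalled in Section~\ref{sec2}. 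Taking expectation under $P$ (applying Tonelli to exchange the expectation and the square root),
\[\int_\Omega\sqrt{\gamma_\omega(x,y)}\,P(d\omega)=\tfrac12\,\mathbb{E}\lvert Z(x)-Z(y)\rvert=\gamma^{(1)}(x,y).\]

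To match the statement's convention that $F$ be a cumulative distribution function on $(0,+\infty)$, I assume without loss of generality that $(\Omega,\mathcal{F},P)$ is a standard Borel probability space, and pick any measurable bijection $\psi\colon\Omega\to(0,+\infty)$ modulo null sets; the pushforward $\psi_\#P$ then has CDF $F$ and the reindexed family $\gamma_u:=\gamma_{\psi^{-1}(u)}$, $u\in(0,+\infty)$, yields $\gamma^{(1)}(x,y)=\int_0^{+\infty}\sqrt{\gamma_u(x,y)}\,F(du)$ via change of variables. The conceptual crux of the proof is the identification of the squared increments of an arbitrary realization as a valid variogram; once this is seen, both directions reduce to short calculations. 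The main technical nuisance I expect is purely cosmetic: the relabelling from the abstract $\Omega$ to $(0,+\infty)$, together with the measurability of the family $\{Y^{(\omega)}\}$ in the sufficiency direction (handled by a standard Kolmogorov extension argument).
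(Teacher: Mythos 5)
Your proof is correct and follows essentially the same route as the paper's: the sufficiency direction is the same Gaussian scale-mixture built from Proposition~\ref{prop02} with a randomized index of law $F$, and the necessity direction rests on the same key observation that $\gamma_\omega(x,y)=\tfrac14\bigl(Z(x,\omega)-Z(y,\omega)\bigr)^2$ is, realization by realization, a valid variogram (the paper cites the conditional negative semidefiniteness of squared Euclidean distances where you verify it by direct expansion, and it handles the relabelling of $\Omega$ as $(0,+\infty)$ via the separability assumption rather than your explicit Borel-isomorphism argument). No gaps.
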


\begin{corollary}
\label{cor2}
    Let $\gamma^{(1)}: \mathbb{X} \times \mathbb{X} \to [0,+\infty)$ be the variogram of a first-order stationary indicator random field on $\mathbb{X}$. Then it has the representation (\ref{madogram}).  
\end{corollary}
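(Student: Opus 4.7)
The plan is to reduce the corollary to Theorem \ref{thm2} by observing that, for an indicator random field, the variogram and the madogram coincide as functionals of the field. Nothing deeper is needed.

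The key step is as follows. Let $A$ denote the first-order stationary random set whose indicator variogram is $\gamma^{(1)}$. Since $\mathsf{1}_A(x)$ and $\mathsf{1}_A(y)$ take values in $\{0,1\}$, their difference lies in $\{-1,0,1\}$, and so $[\mathsf{1}_A(x) - \mathsf{1}_A(y)]^2 = |\mathsf{1}_A(x) - \mathsf{1}_A(y)|$ pointwise. Taking expectations gives
\begin{equation*}
\gamma^{(1)}(x,y) = \tfrac{1}{2} \mathbb{E}\bigl\{[\mathsf{1}_A(x) - \mathsf{1}_A(y)]^2\bigr\} = \tfrac{1}{2} \mathbb{E}\bigl\{|\mathsf{1}_A(x) - \mathsf{1}_A(y)|\bigr\},
\end{equation*}
so $\gamma^{(1)}$ is simultaneously the variogram and the madogram of the indicator random field $\mathsf{1}_A$. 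The integrability needed for the madogram to be defined is automatic, since $|\mathsf{1}_A(x) - \mathsf{1}_A(y)| \leq 1$ almost surely; no additional moment assumption has to be imposed.

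Once this identification is in place, Theorem \ref{thm2} applied to the random field $\mathsf{1}_A$ delivers the representation (\ref{madogram}) directly, which is exactly what the corollary asserts. In this sense, the conceptual difficulty of the corollary is entirely absorbed into Theorem \ref{thm2}; what is special to the indicator setting is only the elementary identity $t^2 = |t|$ on $\{-1,0,1\}$, which collapses the $L^2$-based variogram functional onto the $L^1$-based madogram functional. The main obstacle, therefore, is essentially nonexistent at this stage: any care needed regarding the mixing distribution $F$ or the family $(\gamma_\omega)_{\omega > 0}$ in the representation has already been dealt with when proving Theorem \ref{thm2}, and can simply be reused here without modification.
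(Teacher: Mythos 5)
Your proposal is correct and follows exactly the paper's own argument: the paper's proof likewise reduces the corollary to Theorem \ref{thm2} via the observation that the variogram of an indicator random field coincides with its madogram, which is precisely your identity $t^2 = |t|$ on $\{-1,0,1\}$. You have merely spelled out the one-line justification the paper leaves implicit.
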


Note that the reciprocal of Corollary \ref{cor2} is not true: A mapping having the representation (\ref{madogram}) may not satisfy the upper-bound inequality (\ref{upperbound}) and, therefore, may not belong to the set of indicator variograms. However, an equivalence between madograms and indicator variograms can be established under an additional assumption of boundedness of the random field distribution, as stated in the next theorem. 

\begin{theorem}
    \label{mad2ind}
A mapping $\gamma^{(1)}: \mathbb{X} \times \mathbb{X} \to [0,\frac{1}{2}]$ is the madogram of a random field on $\mathbb{X}$ with values in $[0,1]$ and stationary marginal distribution if, and only if, $\gamma^{(1)}$ is the variogram of a first-order stationary indicator random field.     
\end{theorem}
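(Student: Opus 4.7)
The plan is to establish the two implications separately. The reverse direction (indicator variogram $\Rightarrow$ madogram) is essentially immediate, while the forward direction (madogram $\Rightarrow$ indicator variogram) requires expressing the madogram as an integral mixture of indicator variograms and then invoking the closed-convex structure established earlier in Section \ref{sec3}.

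For the reverse direction, suppose $\gamma^{(1)}$ is the variogram of a first-order stationary indicator random field $Y$. Since $Y$ takes values in $\{0,1\} \subset [0,1]$, one has $(Y(x)-Y(y))^2 = |Y(x)-Y(y)|$ almost surely, so the variogram of $Y$ coincides with its madogram. First-order stationarity of $Y$ forces the Bernoulli marginal to have a constant parameter, so $Y$ is a $[0,1]$-valued random field with stationary marginal distribution; the range $[0,\tfrac{1}{2}]$ comes from the upper-bound inequality (\ref{upperbound}).

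For the forward direction, suppose $\gamma^{(1)}$ is the madogram of a random field $Z$ with values in $[0,1]$ and stationary marginal distribution. I would apply the layer-cake identity
\begin{equation*}
|a-b| = \int_0^1 \bigl|\mathsf{1}_{a \geq \lambda} - \mathsf{1}_{b \geq \lambda}\bigr| \, {\rm d}\lambda, \qquad a,b \in [0,1],
\end{equation*}
to $a = Z(x)$ and $b = Z(y)$, take expectation, and invoke Fubini--Tonelli (permissible because the integrand is bounded by $1$) to obtain
\begin{equation*}
\gamma^{(1)}(x,y) = \int_0^1 g_\lambda(x,y)\,{\rm d}\lambda,
\end{equation*}
where $g_\lambda$ is the variogram of the indicator random field $\mathsf{1}_{Z(\cdot) \geq \lambda}$. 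The stationary marginal hypothesis ensures that $P(Z(x) \geq \lambda)$ is independent of $x$ for every $\lambda$, so each $g_\lambda$ is a \emph{bona fide} first-order stationary indicator variogram.

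It remains to show that the integral mixture $\int_0^1 g_\lambda\,{\rm d}\lambda$ is itself an indicator variogram. This is where Propositions \ref{thm3}(2) and \ref{closedconvex} enter: the former says that finite convex combinations of indicator variograms are indicator variograms, and the latter says that the set of indicator variograms is closed under simple convergence. Approximating the integral pointwise (at each fixed pair $(x,y)$) by Riemann sums, or more safely by simple-function approximations of $\lambda \mapsto g_\lambda(x,y)$ built on finer and finer partitions of $[0,1]$, expresses $\gamma^{(1)}(x,y)$ as a simple-convergence limit of finite convex combinations of indicator variograms. The only piece of bookkeeping is the measurability of $\lambda \mapsto g_\lambda(x,y)$ and the dominated-convergence argument justifying the limiting passage; both are routine given the uniform bound $g_\lambda \leq \tfrac{1}{2}$. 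I expect no genuinely hard step: the main conceptual point is simply to recognize that the introductory integral identity $\int g_\lambda\,{\rm d}\lambda = \gamma^{(1)}$, combined with the closed convex structure of indicator variograms, is exactly what converts the madogram assumption into an indicator-variogram conclusion under the $[0,1]$-valued constraint.
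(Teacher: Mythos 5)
Your proof is correct, and it follows the paper's own strategy up to the last step: both arguments hinge on the layer-cake identity $\gamma^{(1)}(x,y)=\int_0^1 g_\lambda(x,y)\,{\rm d}\lambda$, where $g_\lambda$ is the (first-order stationary, by the stationary-marginal hypothesis) variogram of the excursion-set indicator $\mathsf{1}_{Z(\cdot)\geq\lambda}$, and your treatment of the reverse direction is the paper's. Where you diverge is in how you pass from ``mixture of indicator variograms'' to ``indicator variogram.'' The paper integrates the corner-positive inequalities (\ref{corner}) over $\lambda\in[0,1]$ and concludes directly from the fact that these inequalities are necessary and sufficient; you instead approximate the integral by finite convex combinations and invoke Propositions \ref{thm3}(2) and \ref{closedconvex}. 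Your route works, but the ``routine bookkeeping'' you defer deserves one explicit sentence: you need a \emph{single} sequence of finite convex combinations (e.g.\ uniform-partition Riemann sums $\frac{1}{n}\sum_{i=1}^n g_{i/n}$) that converges pointwise for \emph{every} pair $(x,y)$ simultaneously, and this holds because $\lambda\mapsto g_\lambda(x,y)$ is bounded and discontinuous only at atoms of the laws of $Z(x)$ and $Z(y)$ --- a countable set --- hence Riemann integrable for each fixed $(x,y)$. Note also that the two concluding mechanisms are less different than they look: the closedness in Proposition \ref{closedconvex} is itself proved via the corner-positive inequalities, so both arguments ultimately rest on the same characterization; the paper's version is just shorter because it skips the approximation layer, while yours has the modest advantage of not re-invoking the corner-positive criterion explicitly at this point.
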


\begin{corollary}
    \label{mad2ind2}
    The  madogram of a random field with bounded values and stationary marginal distribution is, up to a multiplicative factor, the variogram of a first-order stationary indicator random field. 
\end{corollary}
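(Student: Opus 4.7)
The plan is to reduce Corollary \ref{mad2ind2} to Theorem \ref{mad2ind} by an affine rescaling of the random field. The two ingredients to verify are: (i) the madogram is positive-homogeneous in the sense that the madogram of $aZ+b$ equals $|a|$ times the madogram of $Z$, and (ii) affine rescaling preserves stationarity of the marginal distribution. Both are immediate from the definition of the madogram and of stationary marginals.

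Concretely, let $Z$ be a random field on $\mathbb{X}$ with stationary marginal distribution and a.s.\ bounded values, and let $m,M$ denote the essential infimum and supremum of the common marginal, which are finite by hypothesis. If $m=M$, then $Z$ is a.s.\ constant, $\gamma_Z^{(1)}\equiv 0$, and the conclusion is trivial (the zero function is the indicator variogram of a deterministic set). Otherwise, define
\[
Y(x) \;=\; \frac{Z(x)-m}{M-m}, \qquad x \in \mathbb{X}.
\]
Then $Y$ takes values in $[0,1]$, has stationary marginal distribution, and its madogram satisfies
\[
\gamma_Y^{(1)}(x,y) \;=\; \frac{1}{M-m}\,\gamma_Z^{(1)}(x,y) \;\leq\; \frac{1}{2},
\]
the last bound because $|Y(x)-Y(y)|\leq 1$ almost surely.

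With these properties in hand, Theorem \ref{mad2ind} applies directly to $Y$ and yields that $\gamma_Y^{(1)}$ is the variogram of a first-order stationary indicator random field. Multiplying by $M-m$ gives $\gamma_Z^{(1)} = (M-m)\,\gamma_Y^{(1)}$, which is the madogram of $Z$ expressed as a positive multiple of an indicator variogram, as claimed. There is no genuine obstacle: Theorem \ref{mad2ind} does all the work, and the corollary is essentially the observation that both the madogram and the class of indicator variograms scale compatibly under affine transformations of the underlying random field.
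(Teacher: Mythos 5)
Your proposal is correct and follows the same route as the paper, which simply states that the corollary is an immediate consequence of Theorem \ref{mad2ind}; your affine rescaling $Y=(Z-m)/(M-m)$ is precisely the detail being left implicit there. The degenerate case $m=M$ and the bound $\gamma_Y^{(1)}\leq \frac{1}{2}$ are handled correctly, so nothing is missing.
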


\begin{theorem}
\label{thm5}
    Let $\gamma^{(1)}: \mathbb{X} \times \mathbb{X} \to [0,+\infty)$ be the madogram of a random field on $\mathbb{X}$. Then, for any positive integer $n$ and any $x_1,\ldots,x_n \in \mathbb{X}$, it fulfills the following inequalities:
    \begin{enumerate}     
        \item negative-type inequalities (\ref{condneg});
        \item polygonal inequalities (\ref{polyg});
        \item odd-clique inequalities (\ref{oddcl});
        \item hypermetric inequalities (\ref{hyperm}). 
    \end{enumerate}
    If, furthermore, the marginal distribution of the random field is stationary and valued in $[0,1]$, then $\gamma^{(1)}$ also fulfills
    \begin{enumerate}
        \item the upper-bound inequality (\ref{upperbound});
        \item the positive semidefinite inequalities (\ref{psd});
        \item the rounded positive semidefinite inequalities (\ref{roundedpsd});
        \item the gap inequalities (\ref{gap});
        \item the corner-positive inequalities (\ref{corner}). 
    \end{enumerate}
\end{theorem}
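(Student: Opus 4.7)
My plan splits into two parts. For the first list of inequalities (which hold for any madogram), I will write $\gamma^{(1)}$ as a nonnegative mixture of cut semimetrics and deduce each inequality from its known validity on the cut cone. For the second list (under the extra assumption that $Z$ takes values in $[0,1]$ with a stationary marginal), I will apply Theorem \ref{mad2ind} to identify $\gamma^{(1)}$ with an indicator variogram and then invoke the corresponding items of Theorem \ref{thm4} directly.

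The first part rests on the layer-cake identity $|a-b|=\int_\mathbb{R}|\mathsf{1}_{a\geq\lambda}-\mathsf{1}_{b\geq\lambda}|\,d\lambda$ which, together with Tonelli's theorem, yields
$$\gamma^{(1)}(x,y)=\frac{1}{2}\,\mathbb{E}|Z(x)-Z(y)|=\frac{1}{2}\int_\mathbb{R}\mathbb{P}\bigl(\mathsf{1}_{Z(x)\geq\lambda}\neq\mathsf{1}_{Z(y)\geq\lambda}\bigr)\,d\lambda.$$
For each outcome $\omega$ and each level $\lambda$, the map $(x,y)\mapsto|\mathsf{1}_{Z(\omega,x)\geq\lambda}-\mathsf{1}_{Z(\omega,y)\geq\lambda}|$ is the cut semimetric $d_{S_\lambda(\omega)}$ of the excursion set $S_\lambda(\omega)=\{u\in\mathbb{X}:Z(\omega,u)\geq\lambda\}$, so $\gamma^{(1)}$ lies in the closed convex cone generated by cut semimetrics on $\mathbb{X}$. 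The four inequalities of the first list --- negative-type, polygonal, odd-clique, and hypermetric --- are linear and classically valid on every cut semimetric (Deza 1961; Kelly 1975; Barahona-Mahjoub 1986), hence pass to nonnegative mixtures and to pointwise limits.

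For the second list, under the additional hypothesis, Theorem \ref{mad2ind} identifies $\gamma^{(1)}$ with the variogram of a first-order stationary indicator random field on $\mathbb{X}$, and the upper-bound (\ref{upperbound}), positive semidefinite (\ref{psd}), rounded positive semidefinite (\ref{roundedpsd}), gap (\ref{gap}), and corner-positive (\ref{corner}) inequalities then follow directly from Theorem \ref{thm4}. The hard part is the cut-cone step: the direct computation $\sum_{k,\ell}\lambda_k\lambda_\ell\,d_S(x_k,x_\ell)=2\sigma_S(\sigma-\sigma_S)$, with $\sigma_S=\sum_{x_k\in S}\lambda_k$ and $\sigma=\sum_k\lambda_k$, settles negative-type and hypermetric immediately (integrality of $\sigma_S$ forces $\sigma_S(1-\sigma_S)\leq 0$), while the polygonal and odd-clique cases are combinatorial and must be invoked from the cited references. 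The remaining interchanges of expectation, Lebesgue integral, and finite sums are routine.
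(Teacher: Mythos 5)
Your handling of the second part coincides with the paper's proof: Theorem \ref{mad2ind} turns $\gamma^{(1)}$ into the variogram of a first-order stationary indicator random field (the hypothesis $Z\in[0,1]$ automatically gives $\gamma^{(1)}\leq\frac{1}{2}$, so that theorem applies), and Theorem \ref{thm4} then supplies the five inequalities. For the first part you take a genuinely different route. The paper argues that any madogram is the pointwise limit of positive multiples of indicator variograms (Matheron) and that the four inequalities, having zero right-hand sides, are invariant under positive scaling and pointwise limits, hence inherited from Theorem \ref{thm4}. You instead make the mixture representation explicit via the layer-cake identity and reduce to cut semimetrics $d_{S_\lambda(\omega)}$, on which the quadratic form equals $2\sigma_S(\sigma-\sigma_S)$. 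This is more self-contained: it settles negative type ($\sigma=0$, where $-2\sigma_S^2\leq 0$ needs no integrality) and hypermetric ($\sigma=1$, integer $\sigma_S$) in one line, and---contrary to your remark that it ``must be invoked from the cited references''---it also settles the polygonal inequality (\ref{polyg}), which is precisely the instance $\lambda_k=1$ for $k\leq\lfloor n/2\rfloor$ and $\lambda_k=-1$ otherwise, so that $\sigma\in\{0,-1\}$ and $\sigma_S(\sigma-\sigma_S)\leq 0$.

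There is, however, a genuine gap at the odd-clique step, and your own identity exposes it: with $\boldsymbol{\lambda}=(1,1,1)$ (so $\sigma=3$, odd) and a cut $S$ containing exactly one of the three points, $2\sigma_S(\sigma-\sigma_S)=4>0$, so inequality (\ref{oddcl}) with right-hand side $0$ is \emph{not} valid on cut semimetrics once $\lvert\sigma\rvert\geq 3$; the Barahona--Mahjoub inequalities you would cite carry a nonzero right-hand side of order $\sigma^2/4$, so the reference cannot close this case. The obstruction survives averaging: taking the random set uniform over the three two-point subsets of $\{x_1,x_2,x_3\}$ yields a first-order stationary indicator variogram $g\equiv\frac{1}{3}$ with $\sum_{k,\ell}\lambda_k\lambda_\ell\,g(x_k,x_\ell)=2>0$, which is consistent with the gap inequality (\ref{gap}) (whose bound here is $\frac{1}{4}(\sigma^2-1)=2$) but not with the homogeneous bound $\leq 0$. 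So this item cannot be repaired by switching to the paper's reduction through Theorem \ref{thm4} either; the inequality appears to require $\sum_k\lambda_k\in\{-1,1\}$ rather than merely odd. Apart from this item, your argument is sound, and the interchanges of expectation, integral and finite sums are indeed routine.
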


\section{Examples of Indicator Variograms}
\label{sec:examples}

In the next three subsections, $\mathbb{X}$ is a metric space, i.e., a set of points endowed with a distance (see \ref{app_background1} for background details on distance geometry): 
\begin{itemize}
    \item the $N$-dimensional Euclidean space $\mathbb{R}^N$ endowed with the Euclidean distance $d_\text{E}$ (Section \ref{sec41}); 
    \item the $N$-dimensional unit sphere $\mathbb{S}^N(1)$ endowed with the geodesic (great circle) distance $d_{\text{GC}}$ (Section \ref{sec42}); 
    \item an undirected simple finite weighted graph endowed with the square-rooted resistance distance ($\sqrt{d_\text{R}}$) or an undirected simple finite unweighted graph endowed with the communicability distance ($d_\text{C}$) (Section \ref{sec43}).
\end{itemize}
In contrast, in Section \ref{sec44}, $\mathbb{X}$ is any abstract set of points, not necessarily endowed with a distance.

\subsection{Indicator Variograms in Euclidean Spaces}
\label{sec41}

\begin{example}
\label{example3}
    Let $\gamma$ be the radial part of the variogram of an isotropic standard Gaussian random field in $\mathbb{R}^N$, and let $\varpi \in [0,1]$. Then, the following mapping is a valid indicator variogram:
    \begin{equation}
    \label{seriesmodel}
        g(x,y) = \frac{2\varpi}{\pi^2} \sum_{k=0}^{+\infty} \frac{\gamma((2k+1) d_{\text{E}}(x,y))}{(2k+1)^2}, \quad x,y \in \mathbb{R}^N.
    \end{equation}
\end{example}

Particular cases are provided in Table \ref{tab:euclideanexamples}. These models depend on two parameters (a scale parameter $\lambda$ and a sill parameter $\varpi$), reach a maximum sill of $0.25$ and behave linearly near the origin (Figure \ref{fig:elementary}). The modified Bessel function of the first kind $I_{\frac{3}{2}}$ is given by \citep{prud2}
$$I_{\frac{3}{2}}(t) = \sqrt{\frac{2}{\pi}} \frac{t \cosh t - \sinh t}{t^{\frac{3}{2}}}, \quad t > 0,$$
so that all the models in Table \ref{tab:euclideanexamples} can be expressed in terms of elementary functions. 

\begin{table}[ht]
\footnotesize
\label{tab:euclideanexamples}
\begin{tabular}{ l  l  l}
\hline
Variogram model & Expression of $g(x,y)$ & Restrictions \\
\hline
Hyperbolic tangent 1 & $\frac{\varpi \, d_\text{E}(x,y)}{4 \lambda} \tanh \left(\frac{\lambda}{d_\text{E}(x,y)} \right)$ & $\lambda>0$, $\varpi \in [0,1]$, $N>0$ \\
Hyperbolic tangent 2 & $\frac{\varpi}{8} \left[\frac{3 \, d_\text{E}(x,y)}{\lambda} \tanh\left(\frac{\lambda}{d_\text{E}(x,y)}\right) + \tanh^2\left(\frac{\lambda}{d_\text{E}(x,y)}\right)-1 \right]$ & $\lambda>0$, $\varpi \in [0,1]$, $N>0$ \\
I-Bessel & $\frac{3 \varpi \sqrt{\pi d_\text{E}(x,y)}}{2\sqrt{\lambda}\left[1-\exp\left(-\frac{\lambda}{d_\text{E}(x,y)}\right)\right]} \exp\left(-\frac{\lambda}{2d_\text{E}(x,y)}\right) I_{\frac{3}{2}}\left(\frac{\lambda}{2d_\text{E}(x,y)}\right)$ & $\lambda>0$, $\varpi \in [0,1]$, $N=1$ \\
\hline
\end{tabular}
\caption{Elementary indicator variograms in $\mathbb{R}^N \times \mathbb{R}^N$}
\end{table}

\begin{figure}[h]
    \centering
\includegraphics[width = 0.55\textwidth]{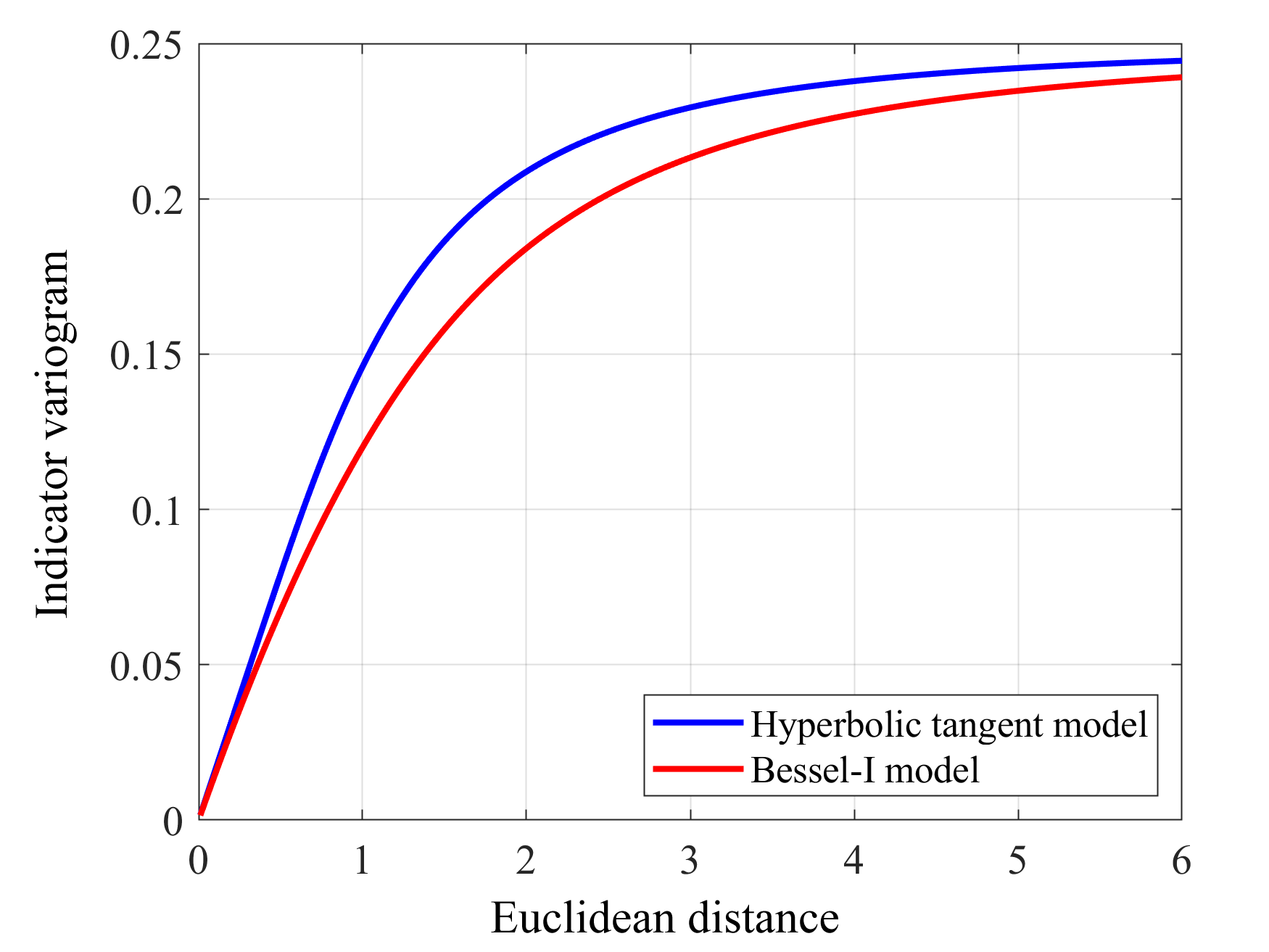}
    \caption{Elementary models that are admissible as indicator variograms in Euclidean spaces: hyperbolic tangent model 1 with $\lambda = \frac{\pi}{2}$ and $\varpi=1$, and I-Bessel model with $\lambda = 10$ and $\varpi=1$.}
    \label{fig:elementary}
\end{figure}

\begin{example}
\label{madex}
Let $\gamma$ be the variogram of a stationary standard Gaussian random field in $\mathbb{R}^N$, and let $\varpi \in [0,1]$. Then, the mapping $g$ defined by
    \begin{equation}
    \label{seriesmodel2}
        g(x,y) = \frac{4\varpi}{\pi^2} \sum_{k=1}^{+\infty} \frac{\gamma(2k(x-y))}{4k^2-1}, \quad x,y \in \mathbb{R}^N,
    \end{equation}
is a valid indicator variogram. 
\end{example}

\begin{example}[Completely monotone covariances]
\label{complmonot}
Let $\varphi: [0,+\infty) \to (0,\frac{1}{4}]$ be completely monotone. Then, the mapping $g$ defined by
\begin{equation}
\label{completmonot}
    g(x,y) = \varphi(0)-\varphi(d_\text{E}(x,y)), \quad x, y \in \mathbb{R}^N,
\end{equation}
is a valid indicator variogram.     
\end{example}

Recall that, owing to Bernstein's theorem, a completely monotone function can be written as a scale mixture of exponential covariances: \begin{equation}
    \label{berns}
    \varphi(x) = \varphi(0) \int_0^{+\infty} \exp(-t x) {\rm d}F(t),
\end{equation} 
where $F$ is the cumulative distribution function of a probability measure on $[0,+\infty)$.

Particular cases of indicator variograms of the form (\ref{completmonot}) \citep{emery2006} are provided in Table \ref{tab:complmonot}. These models depend on two to three parameters (a scale parameter $a$, a sill parameter $\varpi$ and, possibly, a shape parameter $b$) and reach a maximum sill of $0.25$. 

\begin{table}[ht]
\footnotesize
\label{tab:complmonot}
\begin{tabular}{ l  l  l}
\hline
Variogram model & Expression of $g(x,y)$ & Restrictions \\
\hline
Exponential & $\frac{\varpi}{4}-\frac{\varpi}{4}\exp(-a \, d_\text{E}(x,y))$ & $a>0$, $\varpi \in [0,1]$, $N>0$ \\
Gamma & $\frac{\varpi}{4}-\frac{\varpi}{4}\left(1+\frac{d_\text{E}(x,y)}{a}\right)^{-b}$ & $a>0$, $b>0$, $\varpi \in [0,1]$, $N>0$ \\
Stable & $\frac{\varpi}{4}-\frac{\varpi}{4}\exp(-a \, d^b_\text{E}(x,y))$ & $a>0$, $b \in (0,1]$, $\varpi \in [0,1]$, $N>0$ \\
Mat\'ern & $\frac{\varpi}{4} - \frac{\varpi}{2^{b+1} \, \Gamma(b)} \left(\frac{d_\text{E}(x,y)}{a}\right)^b {\cal K}_b\left(\frac{d_\text{E}(x,y)}{a}\right)$ & $a>0$, $b \in (0,\frac{1}{2}]$, $\varpi \in [0,1]$, $N>0$ \\
\hline
\end{tabular}
\caption{Indicator variograms, expressed in terms of elementary functions or of modified Bessel function of the second kind (${\cal K}$), that are valid in $\mathbb{R}^N \times \mathbb{R}^N$}
\end{table}

\subsection{Indicator Variograms on Spheres}
\label{sec42}

\begin{example}[Linear variogram on the $N$-sphere]
\label{linexample}
For any $\varpi \in [0,1]$, the mapping $g$ defined by
\begin{equation*}
    g(x,y) = \frac{\varpi\, d_{\text{GC}}(x,y)}{2 \pi}, \quad x, y \in \mathbb{S}^N(1),
\end{equation*}
is a valid indicator variogram. 
\end{example}

\begin{example}[Exponential variogram on the $N$-sphere]
\label{expexample}
For any $\varpi \in [0,1]$ and $t>0$, the mapping $g$ defined by
\begin{equation*}
    g(x,y) = \frac{\varpi}{4} \left(1- \exp(- t \, d_{\text{GC}}(x,y))\right), \quad x, y \in \mathbb{S}^N(1),
\end{equation*}
is a valid indicator variogram. 
\end{example}

\begin{example}[Triangular wave on the circle]
\label{triexample}
For any $\varpi \in [0,1]$ and any positive integer $k$, the mapping $g$ defined by
\begin{equation}
\label{triangwav}
    g(x,y) = \frac{\varpi}{2\pi} \min_{n \in \mathbb{N}} \, \lvert k \, d_{\text{GC}}(x,y) - 2n \pi \rvert, \quad x, y \in \mathbb{S}^1(1),
\end{equation}
is a valid indicator variogram.     
\end{example}

\begin{example}[Quadratic variogram on the circle]
\label{quadexample}
For any $\varpi \in [0,1]$, the mapping $g$ defined by
\begin{equation*}
    g(x,y) = \frac{3\varpi}{8\pi^2} d_{\text{GC}}(x,y) (2\pi-d_{\text{GC}}(x,y)), \quad x, y \in \mathbb{S}^1(1),
\end{equation*}
is a valid indicator variogram.     
\end{example}

\subsection{Indicator Variograms on Undirected Simple Finite Graphs}
\label{sec43}

\begin{example}
\label{examplegraph1}
    The hyperbolic tangent models in Table \ref{tab:euclideanexamples}, as well as the models of the form (\ref{completmonot}) are valid indicator variograms on graphs, when substituting the Euclidean distance $d_\text{E}$ with the communicability distance $d_\text{C}$ or with the square-rooted resistance distance $\sqrt{d_\text{R}}$. 
\end{example}

\begin{remark}
    The models in Example \ref{examplegraph1} are also valid for the so-called \emph{graphs with Euclidean edges} endowed with the resistance distance. Such graphs, introduced by Anderes et al. \citep{Anderes2020}, contain not only the vertices, but also the edges.
\end{remark}

\subsection{Indicator Variograms in Any Abstract Set of Points}
\label{sec44}

\begin{example}[Variogram models with a partial nugget effect]
    \label{nugget}
    Let $\varpi \in [0,1]$ and let $C$ be the covariance of a random field in $\mathbb{X}$ with values in $[-1,1]$. Then, the following mapping is a valid indicator variogram on $\mathbb{X} \times \mathbb{X}$:
    \begin{equation*}
        g(x,y) = \begin{cases}
            0  \text{ if $x=y$}\\
            \frac{\varpi}{4} (1 - C(x,y)) \text{ otherwise.}
        \end{cases}
    \end{equation*}    
\end{example}

As particular cases, let us mention: 
\begin{itemize}
    \item If $\mathbb{X} = \mathbb{R}^N$, $C(x,y) = \frac{1}{2} \rho(x-y)$, where $\rho$ is a continuous stationary correlation function \citep{Shamai1991}. 

    \item If $\mathbb{X} = \mathbb{S}^1$, $C(x,y) = \frac{1}{2} \rho(d_{\text{GC}}(x,y))$, where $\rho$ is a continuous isotropic correlation function. A random field on the circle with values in $[-1,1]$ and covariance $C$ can be constructed with a cosine wave \citep{alegria2020}. 

    \item If $\mathbb{X} = \mathbb{R}^3$, $C(x,y) = \frac{18}{35} cub_a(x,y)$, where $cub_a$ is the cubic correlation with range $a>0$: $$cub_a(x,y) = \begin{cases}
        0 \text{ if $d_{\text{E}}(x,y) \geq a$}\\
        1-7 \left(\frac{d_{\text{E}}(x,y)}{a}\right)^2 + \frac{35}{4} \left(\frac{d_{\text{E}}(x,y)}{a}\right)^3 - \frac{7}{2} \left(\frac{d_{\text{E}}(x,y)}{a}\right)^5+\frac{3}{4} \left(\frac{d_{\text{E}}(x,y)}{a}\right)^7 \text{ otherwise}.
        \end{cases}$$ A random field in $\mathbb{R}^3$ with values in $[-1,1]$ and covariance $C$ can be constructed with a partition method \citep{emery2006}. 
\end{itemize}

\begin{example}[Special functions]
\label{generalexamples}
Variogram models involving the error function (erf) and hypergeometric function ${}_pF_q$, with $p, q \in \mathbb{N}$, are presented in Table \ref{tab:generalexamples}. Many closed-form expressions of the hypergeometric functions are available in the literature \citep{prud3, weisstein}. 
\end{example}

\begin{landscape}
\begin{table}[ht]
\small
\label{tab:generalexamples}
\begin{tabular}{l l  l}
\hline
Model & Expression of $g(x,y)$ & Restrictions \\
\hline
1& $\frac{\varpi}{4} - \frac{\varpi}{4}\exp(k a \gamma(x,y)) \left[ 1-\text{erf}(\sqrt{a \gamma(x,y)})\right]^k$ & $a>0$, $k \in \mathbb{N}^*$, $\varpi \in [0,1]$ \\
2& $\frac{\varpi}{4} -  \frac{\varpi}{4}\left[\text{erf}\left(\frac{1}{\sqrt{a \gamma(x,y)}} \right) - \frac{1}{\pi} \sqrt{a \gamma(x,y)} \left(1-\exp\left(-\frac{1}{a \gamma(x,y)}\right)\right)\right]^k$ & $a>0$, $k \in \mathbb{N}^*$, $\varpi \in [0,1]$ \\
3& $\sqrt{\frac{\gamma(x,y)}{2\pi(1-\beta)}} \frac{\Gamma(\lambda)}{2\Gamma(\frac{1}{2}+\lambda)} {}_2F_1\left(\frac{1}{2},\frac{1}{2};\frac{1}{2}+\lambda;\frac{\gamma(x,y)-2\beta}{2-2\beta}\right)$ & $\beta \in (-1,1)$, $\lambda \geq \max\{1,\frac{1}{1-\beta}\}$ \\
4& $\sqrt{\frac{\gamma(x,y)}{2\pi}} \, \frac{\Gamma(\lambda) \Gamma(\frac{1}{2}+\lambda-\alpha-\beta)}{2\Gamma(\frac{1}{2}+\lambda-\alpha) \Gamma(\frac{1}{2}+\lambda-\beta)} {}_3F_2\left(\frac{1}{2},\frac{1}{2}+\lambda-\alpha-\beta,\lambda;\frac{1}{2}+\lambda-\alpha,\frac{1}{2}+\lambda-\beta;\frac{\gamma(x,y)}{\gamma(x,y)-2}\right)$ & $\alpha \in (\frac{1}{2},\frac{3}{2})$, $\beta \in (\frac{1}{2}-\alpha,\frac{1}{2})$, $\lambda \geq \alpha+\beta+\frac{1}{2}$ \\
5& $\frac{1}{\pi \lambda} \sqrt{\frac{\gamma(x,y)}{2}} \, {}_{3}F_2\left(\frac{1}{2},\frac{1}{2},1;\frac{1+\lambda}{2},1+\frac{\lambda}{2};{\frac{\gamma(x,y)}{2}}\right)$ & $\lambda > 1$ \\
6& $\frac{\Gamma(\alpha+\beta) \Gamma(\alpha+\frac{1}{2})}{\pi \Gamma(\alpha) \Gamma(\alpha+\beta+\frac{1}{2})} \sqrt{\frac{\gamma(x,y)}{2}} \, {}_3F_2\left(\frac{1}{2},\frac{1}{2},\alpha+\frac{1}{2};\frac{3}{2},\alpha+\beta+\frac{1}{2} ;\frac{\gamma(x,y)}{2}\right)$ & $\alpha > 0$, $\beta>0$ \\
\hline
\end{tabular}
\caption{Indicator variograms on $\mathbb{X} \times \mathbb{X}$, where $\mathbb{X}$ is a set of points. In all cases, $\gamma$ is the variogram of a standard Gaussian random field on $\mathbb{X}$}
\end{table}
\end{landscape}

\begin{remark}
    Models 3 to 6 in Table \ref{tab:generalexamples} with $\lambda=1$ and $\beta=0$ lead to the model in (\ref{indicvariog2}) with $\rho = 1-\gamma$, owing to the hypergeometric representation of the arccosine function, see formulae 1.623.1, 1.626.2, 9.121.26 and 9.131.1 of Gradshteyn and Ryzhik \citep{Grad}.
\end{remark}

\section{Discussion}
\label{sec:discussion}

\subsection{The Particular Role of Median Indicators}

Theorem \ref{thm1} and Corollary \ref{cor01} are disturbing, as they state that any indicator variogram is a median indicator variogram, in circumstances where the variance of a median indicator ($0.25$) is greater than the variance of any indicator whose mean value differs from $0.5$. Such a paradox can be explained because the variogram of a random field does not necessarily reach the value of its variance, even when considering two very distant points in $\mathbb{X}$.

For instance, for a stationary random field in a Euclidean space, the variance is equal to the variogram sill under an assumption of \emph{ergodicity}. However, if in Eq. (\ref{indicvariog}) one considers correlation functions $\rho_{\omega}(x,y)$ that do not tend to zero as $d_{\text{E}}(x,y)$ tends to infinity, then the variogram $g(x,y)$ will not tend to $0.25$ and can therefore model an indicator random field whose mean value is different from $0.5$.

The following construction confirms the previous statements. Starting from any indicator random field $Z$, one can define an indicator random field $\widetilde{Z}$ with mean $0.5$ and with the same variogram as $Z$ by posing
\begin{equation*}
         \widetilde{Z} = \begin{cases}
            Z \text{ with probability $\frac{1}{2}$}\\
            1-Z \text{ with probability $\frac{1}{2}$.}            
        \end{cases}           
\end{equation*}

\subsection{Model Internal Consistency}

What are the implications of modeling an indicator random field with an invalid variogram? One consequence, mentioned in the introductory section, is that one can obtain negative probabilities for certain events on the random set associated with the indicator random field. 

For applications related to spatial prediction via indicator kriging, the above problem can be seen as that of a misspecification of the kriging parameters. Although this problem has been mostly studied for random fields in Euclidean spaces (see Section 3.4.4 of Chil\`es and Delfiner \citep{chiles_delfiner_2012} and references therein) or when the number of data points becomes infinitely large \citep{Stein1988, cressie1993}, some general conclusions can be drawn:
\begin{enumerate}
    \item The impact of a variogram misspecification in the prediction is small provided that the behavior of the variogram at small distances is well modeled, while a considerable impact can be observed otherwise. That is, what matters for spatial prediction purposes is the modeling of the indicator variogram $g(x,y)$ for $x$ close to $y$.
    \item The impact of a variogram misspecification in the prediction error variance is more sensitive to a misspecification of the variogram: not only the behavior at small distances matters, but also its shape, range of values, and behavior at large distances. In any case, although it may not reflect the true prediction error, the calculated error variance will be non-negative if the modeled indicator variogram is a symmetric and conditionally negative semidefinite function. 
\end{enumerate}

Things are different for the reconstruction or realizability problem, i.e., for simulating an indicator random field with a specified variogram. Obviously, if this variogram is invalid, such an indicator random field does not exist. However, several variogram-based algorithms can still be used to construct indicator realizations, such as the sequential indicator algorithm \citep{Alabert1987} or optimization algorithms based on simulated annealing \citep{Torquato2002}. Rather than a genuine random field, these algorithms generate realizations of binary random vectors whose two-point set distributions are \emph{as close as possible} to target distributions represented by a (possibly invalid) indicator variogram. 

Even so, an exact reproduction of the target distributions is generally out of reach. For instance, sequential indicator simulation does not exactly reproduce the target indicator variogram---even if the latter is a valid model---unless this variogram leads to indicator kriging predictions that always lie in the interval $[0,1]$ \citep{Emery2004, chiles_delfiner_2012}, a condition that is violated by most variogram models. An illustration is given in Figure \ref{fig:sisim}: since it does not fulfills the triangle inequality, the cubic variogram is not realizable and the realizations exhibit a completely different variogram (linear near the origin instead of parabolic). In contrast, the exponential variogram is realizable; nonetheless, it is imperfectly reproduced by the realizations at small distances. Whether the spherical variogram is realizable for an indicator random field in $\mathbb{R}^2$ is still unknown, and the results show a poorer reproduction of the small-scale behavior than for the exponential variogram.

As a conclusion, variogram-based simulation algorithms do not provide any insight into the internal model consistency: they do not guarantee that the indicator realizations accurately reproduce the input variogram, not that this variogram is a valid indicator variogram. To correctly address the realizability problem, one must consider the hitting functional or all the finite-dimensional distributions of the indicator random field, which goes well beyond its variogram \citep{emeryortiz2011}.

\begin{figure}[h]
    \centering
\includegraphics[width = 0.995\textwidth]{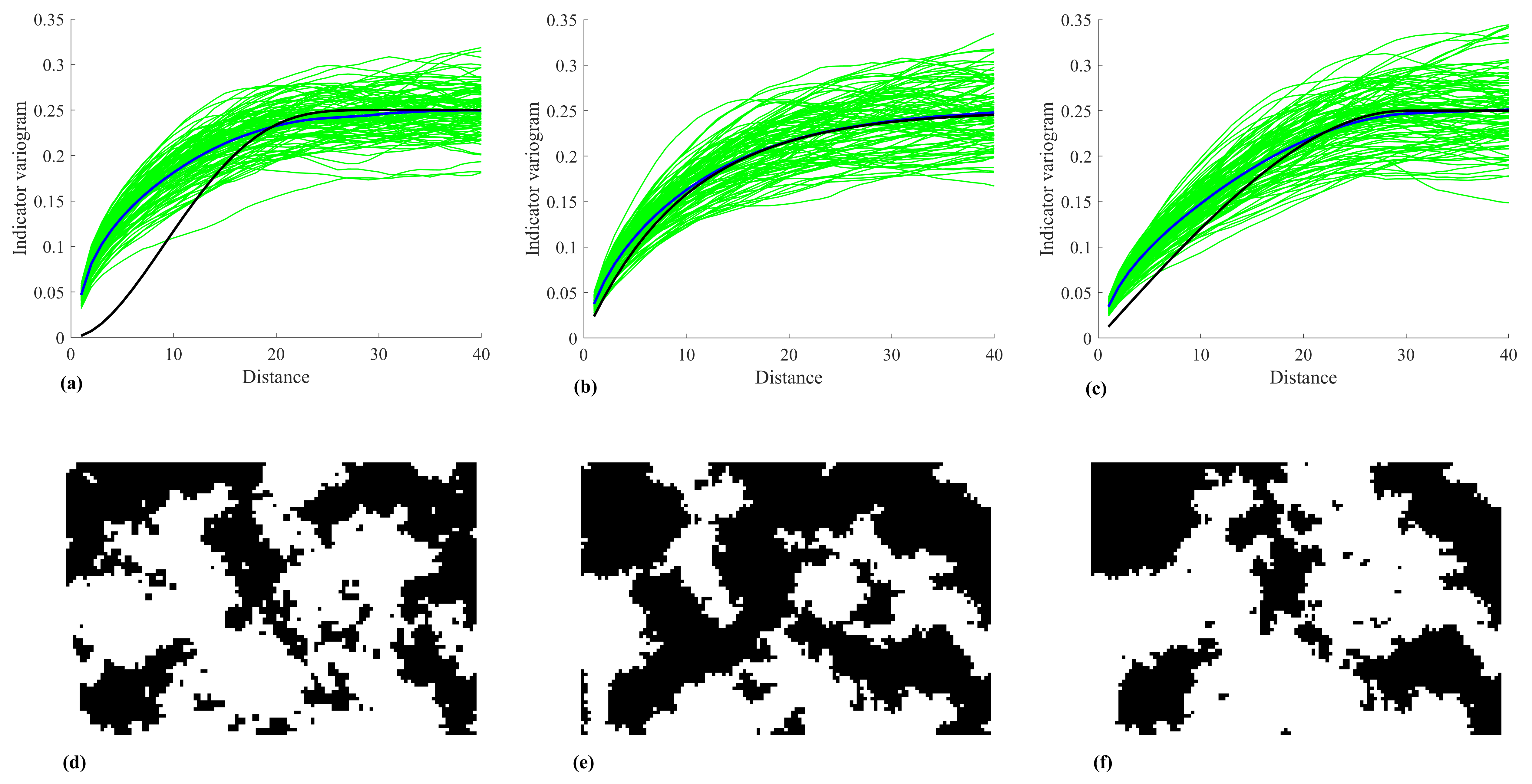}
    \caption{Top: Experimental variograms (green lines) and average experimental variogram (blue line) of $100$ realizations of an indicator with mean value $0.5$ simulated on a regular grid with $120 \times 80$ points in the 2D plane. Experimental variograms are calculated along the first axis. The theoretical variogram inputted in the sequential simulation algorithm is indicated in black and consists of an isotropic cubic (a), exponential (b) or spherical (c) model. Bottom: Examples of indicator realizations for the cubic (d), exponential (e) and spherical (f) models.}
    \label{fig:sisim}
\end{figure}

\subsection{Design of Simulation Algorithms}

As a by-product of our work, algorithms to simulate indicator random fields can be designed based on the characterizations and properties of the indicator variograms.  

\textbf{A general algorithm.} The representation (\ref{indicvariog}) makes it possible to design a general simulation algorithm:
\begin{enumerate}
    \item Simulate a random variable $T$ in $(0,+\infty)$ with cumulative distribution function $F$.
    \item Simulate a zero-mean Gaussian random field $Y_T$ with covariance function $\rho_T$. This can be done by the Cholesky decomposition approach or, if the number of target locations is large, by Gibbs sampling \citep{Arroyo2012}.
    \item Deliver the median indicator of $Y_T$. 
\end{enumerate}

For example, this algorithm can be used to simulate indicator random fields with completely monotone covariances in the Euclidean space (Example \ref{complmonot}), as these covariances are mixtures of exponential covariances, see Eq. (\ref{berns}).

\textbf{Ad hoc algorithms.} 
The above algorithm is not applicable if the cumulative distribution function $F$ and correlation functions $\rho_{\omega}$ in the representation (\ref{indicvariog}) are not known. In specific cases, however, it is possible to design ad hoc simulation algorithms based on the properties of the target indicator variogram.

For instance, consider the exponential variogram of Example \ref{expexample} with $\varpi=1$. Based on the proof of Theorem \ref{g2exp}, an indicator random field with this variogram can be simulated at any set of points $x_1,\ldots,x_n$ on the unit $N$-sphere in the following fashion:
\begin{enumerate}
    \item Simulate a Poisson random variable $K$ with mean $\frac{\pi}{2}t$.
    \item If $K=0$, assign the same value ($0$ or $1$, each with probability $0.5$) to all the target points.
    \item If $K>0$, generate $K$ independent copies of a random field $\widetilde{Z}$ with the linear variogram $g: (x,y) \mapsto \frac{2}{\pi} d_{\text{GC}}(x,y)$ and Rademacher marginal distribution. Denote $\widetilde{Z}_K$ the product of these $K$ copies and deliver $(1+\widetilde{Z}_K)/2$ as the simulated indicator random field.
\end{enumerate}

According to Lemma \ref{lem1}, the random field $\widetilde{Z}$ can be simulated by generating a zero-mean Gaussian random vector $(Y_1,\ldots,Y_n)$ with variance-covariance matrix equal to the Gram matrix of $x_1,\ldots,x_n$ (viewed as points of $\mathbb{R}^{N+1}$) and posing $\widetilde{Z}(x_i) = \text{sign}(Y_i)$ for $i = 1,\ldots,n$. In turn, the simulation of $(Y_1,\ldots,Y_n)$ can be done by means of a central limit approximation, by posing:
\begin{equation*}
    Y_i = \sqrt{\frac{N+1}{Q}} \sum_{q=1}^Q \epsilon_q \, x_{i,L_q},
\end{equation*}
where $x_{i,\ell}$ is the $\ell$-th Euclidean coordinate of $x_i$, $Q$ is a large positive integer, $L_1,\ldots,L_q$ are independent random variables uniformly distributed in $\{1,\ldots,N+1\}$, and $\epsilon_1,\ldots,\epsilon_q$ are independent random variables with a Rademacher distribution. Examples of realizations are depicted in Figure \ref{fig:expsphere}.

\begin{figure}[h]
    \centering
\includegraphics[width = 0.995\textwidth]{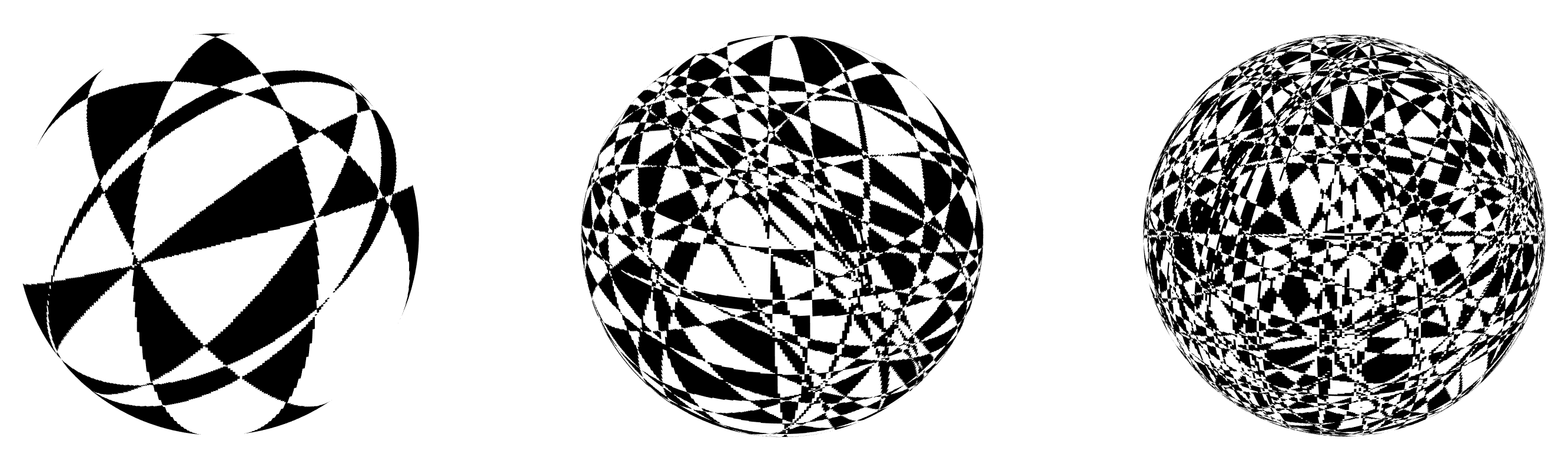}
    \caption{Realizations of indicator random fields on the $2$-sphere with mean value $0.5$ and exponential variogram $g(x,y)=\frac{1}{4}(1-\exp(-t\, d_\text{GC}(x,y))$. From left to right: $t=10$, $t=30$ and $t=50$.}
    \label{fig:expsphere}
\end{figure}

\section{Conclusions}
\label{sec:conclusions}
Necessary and sufficient conditions (NSCs) have been developed to characterize indicator variograms and madograms defined on any set of points. They may be used to design new theoretical models for these structural tools, or to check whether a given function is a valid model. To the best of the authors' knowledge, this is the first time that NSCs have been obtained for madograms. Regarding indicator variograms, the NSCs rest on the properties of Hermitian random fields, which likely makes them more functional than the untractable corner-positive inequalities designed by McMillan \citep{McMillan1955}.

The previous statements do not mean that everything has been sorted out. At the present time, closed-form expressions of many indicator variograms and madograms are still unknown, 
even for the most standard random field models, such as the indicator variograms of truncated Gaussian random fields, for which only integral or series expressions are available (\ref{excursion}). Finally, some valid variogram models (e.g., the well-known spherical variogram in a one-dimensional Euclidean space) may not be easily associated with the representations induced by the NSCs. This simply means that the topic addressed in this paper is a wide field of investigation.

\appendix

\section{Background}
\label{app_background}
\vspace*{12pt}

\subsection{Dissimilarities, Semi-distances and Distances}
\label{app_background1}

Let $\mathbb{X}$ be a set of points. The following definitions are classical in mathematical analysis and distance geometry \citep{Critchley1997}. 

\begin{definition}[Dissimilarity]
A mapping $d: \mathbb{X} \times \mathbb{X} \to [0,+\infty)$ is called a dissimilarity if it is symmetric and has zero diagonal elements, i.e., $d(x,y)=d(y,x)$ and $d(x,x)=0$ for any $x, y \in \mathbb{X}$. 
\end{definition}

\begin{definition}[Semi-distance]
A dissimilarity $d: \mathbb{X} \times \mathbb{X} \to [0,+\infty)$ is a semi-distance if it satisfies the triangle inequality, i.e., $d(x,y) + d(y,z) \geq d(x,z)$ for any $x, y, z \in \mathbb{X}$. 
\end{definition}

\begin{definition}[Distance]
A semi-distance $d: \mathbb{X} \times \mathbb{X} \to [0,+\infty)$ is a distance if it is definite, i.e., $d(x,y)=0 \implies x=y$ for any $x, y \in \mathbb{X}$. 
\end{definition}

A set $\mathbb{X}$ endowed with a distance is called a \emph{metric space}. Examples of distances and metric spaces include:
\begin{enumerate}
    \item The Euclidean distance in the $N$-dimensional Euclidean space $\mathbb{R}^N$: 
    \begin{equation}
    \label{euclidean}
        d_{\text{E}}(x,y) = \sqrt{\sum_{k=1}^N (x_k - y_k)^2}, \quad x=(x_1,\ldots,x_N), y = (y_1,\ldots,y_N).
    \end{equation}
    \item The geodesic (aka great-circle) distance on the $N$-dimensional sphere of radius $r>0$ (denoted $\mathbb{S}^N(r)$) embedded in $\mathbb{R}^{N+1}$: 
    \begin{equation}
    \label{geodesic}
        d_{\text{GC}}(x,y) = r \, \arccos\left(\frac{x^\top y}{r^2}\right), \quad x, y \in \mathbb{S}^N(r),
    \end{equation}
    with $\top$ standing for the transpose operator.
    \item The geodesic (aka shortest path) distance on an undirected simple finite weighted graph $\mathbb{G}$, defined as a finite set of vertices $x_1,\ldots ,x_n$ 
    such that (1) a pair of vertices $(x_k,x_\ell)$ can be connected by at most one edge associated with a positive weight $w_{k\ell}$, (2) the weight matrix $[w_{k\ell}]_{k,\ell=1}^n$ is symmetric, and (3) no edge connects a vertex with itself (no self-loop): 
    \begin{equation}
    \label{geodesic2}
        \begin{split}
        &d_{\text{SP}}(x,y) \\
        &= \begin{cases}
        +\infty \text{ if no path connects $x$ and $y$}\\
        \text{minimum sum of weights across all paths connecting $x$ and $y$ otherwise}.
        \end{cases}
        \end{split}
    \end{equation}
    \item The resistance distance $d_{\text{R}}$ on an undirected simple finite weighted graph $\mathbb{G}$ \citep{Klein1993}. 
    \item The communicability distance $d_{\text{C}}$ on an undirected simple finite unweighted graph $\mathbb{G}$ \citep{Estrada2012}.      
\end{enumerate}

\subsection{Random Fields, First-order Stationarity, and Separability}

Let $\mathbb{X}$ denote a set of points and $(\Omega,\mathcal{A},P)$ a probability space consisting of a sample space $\Omega$, an event space $\mathcal{A}$ and a probability function $P$. While most geostatistics textbooks only define random fields by a set of random variables indexed with the set $\mathbb{X}$, as was also done in the body of the paper, in the following appendices we provide a more technical definition that involves both $\mathbb{X}$ and $\Omega$ and will be essential for proving our findings.

A random field $Z$ on $\mathbb{X}$ is a mapping on $\mathbb{X} \times \Omega$, i.e., $Z = \{Z(x,\omega): x \in \mathbb{X}, \omega \in \Omega\}$, such that
\begin{enumerate}
    \item for any $x \in \mathbb{X}$, the mapping $Z(x,\cdot)$ is a random variable on $(\Omega,\mathcal{A},P)$;
    \item for any $\omega \in \Omega$, the mapping $Z(\cdot,\omega)$ is a function on $\mathbb{X}$ called a \emph{realization} of $Z$. 
\end{enumerate}

Particular cases of interest are:
\begin{itemize}
    \item \textit{Gaussian random fields:} $Z$ is a Gaussian random field if $\sum_{k=1}^n \lambda_k \, Z(x_k,\cdot)$ is a Gaussian random variable for any positive integer $n$ and any choice of $x_1, \ldots, x_n$ in $\mathbb{X}$ and $\lambda_1, \ldots, \lambda_n$ in $\mathbb{R}$.
    \item \textit{Indicator random fields:} $Z$ is an indicator random field if $Z(x,\omega) \in \{0,1\}$ for all $x \in \mathbb{X}$ and all $\omega \in \Omega$. 
\end{itemize}

The proofs in Appendix \ref{app_proof} rely on two assumptions on the random field under study. The first one, which is needed for the characterization of indicator variograms, is that of first-order stationarity, which means that the mapping $x \mapsto \mathbb{E}\{Z(x,\cdot)\}$, where $\mathbb{E}$ denotes the mathematical expectation on the probability space $(\Omega,\mathcal{A},P)$, exists and is constant over $\mathbb{X}$. Note that a random field whose excursion sets are first-order stationary for all thresholds has a stationary marginal distribution.   

The second assumption is that of separability, which means that the probabilities involving any uncountable set of points are uniquely determined from probabilities on countable sets of points. This second assumption can always be taken for granted, insofar as, for any random field, there exists a separable random field with the same spatial distribution \citep{Doob1953}, in particular, with the same variogram or madogram. An implication of separability is that the sample space $\Omega$, which contains the states indexing the realizations of the random field, can be chosen as an interval of $\mathbb{R}$, say $(0,1)$ or $(0,+\infty)$ \citep{Rokhlin1952, Ito1984}.

\section{Complements on Variograms of Any Order}
\label{extensionorder}
\vspace*{12pt}

\subsection{Definition and Known Results}

A generalization of the variogram and the madogram is the variogram of order $\alpha$, with $\alpha>0$, introduced by Matheron \citep{Matheron1989}. It is defined as
\begin{equation*}
    \gamma^{(\alpha)}(x,y) = \frac{1}{2} \mathbb{E} \bigl\{ \vert Z(x) - Z (y) \vert^\alpha \bigr\}, \quad x,y \in \mathbb{X},
\end{equation*}
provided that the expected value exists. This tool has been used in geostatistics to get a robust estimator of the variogram \citep{cressie1980} or to validate a bivariate distribution model \citep{emery2005}. The cases $\alpha=1$ and $\alpha=0.5$ correspond to the \emph{madogram} and \emph{rodogram}, respectively. 

The following propositions derive from Matheron \citep{Matheron1989} and Emery \citep{emery2005}. 

\begin{proposition}
\label{alpha0}
    The mapping $\gamma^{(\alpha)}$ is symmetric and vanishes on the diagonal of $\mathbb{X} \times \mathbb{X}$:
    \begin{equation*}
    \gamma^{(\alpha)}(x,x) = 0, \quad x \in \mathbb{X},
    \end{equation*}
    \begin{equation*}
    \gamma^{(\alpha)}(x,y) = \gamma^{(\alpha)}(y,x), \quad x, y \in \mathbb{X}.
    \end{equation*}
    Furthermore, it fulfills the negative-type inequalities when $\alpha \leq 2$, and the triangle inequality when $\alpha \leq 1$. 
\end{proposition}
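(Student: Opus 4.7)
The plan is to treat the four claims separately, proceeding from the trivial pointwise identities to the genuine inequalities, and deferring the bulk of the work to two classical facts about the function $t \mapsto |t|^\alpha$.

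First I would dispatch the symmetry and the vanishing on the diagonal in one line each: for fixed $\omega \in \Omega$ we have $|Z(x,\omega)-Z(y,\omega)|^\alpha = |Z(y,\omega)-Z(x,\omega)|^\alpha$ and $|Z(x,\omega)-Z(x,\omega)|^\alpha = 0$, and Fubini/linearity of expectation gives the two identities. These steps only need the mere existence of $\gamma^{(\alpha)}$, not any stationarity assumption.

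For the triangle inequality, I would use the elementary subadditivity $|a+b|^\alpha \leq |a|^\alpha + |b|^\alpha$, valid for real $a,b$ when $\alpha \in (0,1]$ (this follows by concavity of $u \mapsto u^\alpha$ on $[0,+\infty)$ together with the ordinary triangle inequality $|a+b|\leq|a|+|b|$). Applying this pointwise in $\omega$ to the decomposition $Z(x)-Z(z) = [Z(x)-Z(y)] + [Z(y)-Z(z)]$, taking expectations, and dividing by $2$ yields
\begin{equation*}
    \gamma^{(\alpha)}(x,z) \leq \gamma^{(\alpha)}(x,y) + \gamma^{(\alpha)}(y,z), \quad x,y,z \in \mathbb{X}.
\end{equation*}

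The main work, and the one I expect to be the crux, is the negative-type inequality for $\alpha \in (0,2]$. The plan is to invoke the classical Schoenberg result that the mapping $(u,v) \mapsto |u-v|^\alpha$ is conditionally negative semidefinite on $\mathbb{R}\times\mathbb{R}$ for every $\alpha \in (0,2]$; equivalently, one may use the integral representation $|t|^\alpha = c_\alpha \int_0^{+\infty} (1-\cos(tu))\, u^{-1-\alpha}\, {\rm d}u$ for $\alpha \in (0,2)$ (and the trivial $|t|^2 = t^2$ for $\alpha = 2$), from which conditional negative semidefiniteness is immediate. Fix $n$, points $x_1,\ldots,x_n \in \mathbb{X}$ and scalars $\lambda_1,\ldots,\lambda_n \in \mathbb{R}$ with $\sum_k \lambda_k = 0$. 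Then, pointwise in $\omega$, applying Schoenberg's property to the real numbers $Z(x_1,\omega),\ldots,Z(x_n,\omega)$ gives
\begin{equation*}
    \sum_{k=1}^n \sum_{\ell=1}^n \lambda_k \lambda_\ell \, |Z(x_k,\omega)-Z(x_\ell,\omega)|^\alpha \leq 0.
\end{equation*}
Taking expectations (which is legitimate since $\gamma^{(\alpha)}$ is assumed to exist, ensuring integrability of each summand by Hölder or by the hypothesis) and multiplying by $\tfrac{1}{2}$ produces $\sum_{k,\ell} \lambda_k \lambda_\ell \, \gamma^{(\alpha)}(x_k,x_\ell) \leq 0$, which is the desired negative-type inequality. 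The only delicate point is justifying that the Schoenberg representation can be invoked at the level of realizations and that the resulting pointwise inequality passes through the expectation; both are standard but deserve an explicit mention.
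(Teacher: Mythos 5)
Your proof is correct. The paper does not actually supply its own proof of this proposition --- it is stated as a known result attributed to Matheron (1989) and Emery (2005) --- but your argument (pointwise application, realization by realization, of the subadditivity of $t\mapsto|t|^\alpha$ for $\alpha\le 1$ and of Schoenberg's conditional negative semidefiniteness of $(u,v)\mapsto|u-v|^\alpha$ for $\alpha\le 2$, followed by taking expectations) is exactly the standard route and is the same ``apply the deterministic inequality to $Z(\cdot,\omega)$ and integrate over $\Omega$'' technique the paper itself uses in the necessity part of its proof of Theorem \ref{thm2}.
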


\begin{proposition}
\label{alpha}
    Let $\alpha \leq \beta$ be positive real numbers. Any variogram of order $\alpha$ is also a variogram of order $\beta$. 
\end{proposition}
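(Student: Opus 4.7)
The plan is to construct, from any random field $Z$ on $\mathbb{X}$ realising the given function as its variogram of order $\alpha$, an auxiliary random field $W$ satisfying $\gamma^{(\beta)}_W = \gamma^{(\alpha)}$, by placing a conditional Gaussian field on top of $Z$. The key ingredient is that the hypothesis $\alpha \leq \beta$ forces $2\alpha/\beta \leq 2$, which unlocks Schoenberg's fractional-power theorem.

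First, for each realisation $Z_\omega$ of $Z$, I would verify that the mapping
\[
h_\omega(x,y) = |Z_\omega(x) - Z_\omega(y)|^{2\alpha/\beta}, \qquad x, y \in \mathbb{X},
\]
is a valid variogram on $\mathbb{X}$. This proceeds in two steps: (i) the function $(x,y) \mapsto (Z_\omega(x) - Z_\omega(y))^2$ is conditionally negative semidefinite, since for any $\lambda_1, \dots, \lambda_n \in \mathbb{R}$ with $\sum_k \lambda_k = 0$ one has $\sum_{k,\ell} \lambda_k \lambda_\ell (Z_\omega(x_k) - Z_\omega(x_\ell))^2 = -2\bigl(\sum_k \lambda_k Z_\omega(x_k)\bigr)^2 \leq 0$; and (ii) since $\alpha/\beta \leq 1$, Schoenberg's fractional-power theorem applied to this non-negative conditionally negative semidefinite function yields that $h_\omega = \bigl((Z_\omega(x)-Z_\omega(y))^2\bigr)^{\alpha/\beta}$ is itself conditionally negative semidefinite, hence a variogram.

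Next, on an enlarged probability space, I would define $W$ conditionally on $Z$: given $Z = Z_\omega$, let $W$ be a zero-mean Gaussian random field on $\mathbb{X}$ with variogram $c\, h_\omega$, where $c = \bigl(2\, m_\beta^{2/\beta}\bigr)^{-1}$ and $m_\beta = \mathbb{E}|N(0,1)|^\beta$. Such a field exists by the preceding step. Using $|W(x) - W(y)| \mid Z \stackrel{d}{=} \sqrt{2 c\, h_Z(x,y)}\,|N(0,1)|$ together with the identity $h_Z(x,y)^{\beta/2} = |Z(x) - Z(y)|^\alpha$, I would compute
\[
\mathbb{E}\bigl[\,|W(x) - W(y)|^\beta \,\big|\, Z\,\bigr] = (2c)^{\beta/2}\, m_\beta\, |Z(x) - Z(y)|^\alpha,
\]
and taking the total expectation would give $\mathbb{E}|W(x) - W(y)|^\beta = (2c)^{\beta/2} m_\beta \cdot 2\, \gamma^{(\alpha)}(x,y) = 2\, \gamma^{(\alpha)}(x,y)$ by the choice of $c$, so $\gamma^{(\beta)}_W = \gamma^{(\alpha)}$, as desired.

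The main obstacle lies in step (ii) of the variogram verification: the conditional negative semidefiniteness of $h_\omega$ for every deterministic realisation $Z_\omega$ hinges critically on the hypothesis $\alpha \leq \beta$, since otherwise the exponent $2\alpha/\beta$ would exceed $2$ and fractional powers of a conditionally negative semidefinite function need no longer stay in that class. A secondary technical issue, which I would handle by an explicit product-space construction (drawing $Z$ and then, conditionally, the Gaussian increments), is to ensure that the conditional Gaussian field $W$ depends measurably on the realisation of $Z$.
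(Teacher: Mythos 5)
Your proof is correct. Note that the paper does not actually supply its own proof of this proposition: it is presented among the ``Known Results'' of \ref{extensionorder} and attributed to Matheron and Emery, so there is nothing to match line by line. Your construction is nevertheless the natural self-contained argument, and it is fully consistent with the machinery the paper develops afterwards: it amounts to an explicit, trajectory-wise version of the representation in Theorem \ref{thm2c} combined with Proposition \ref{prop02b}. Indeed, writing $\gamma^{(\alpha)}=\int [\gamma_\omega]^{\alpha/2}\,F({\rm d}\omega)=\int \bigl[(\gamma_\omega)^{\alpha/\beta}\bigr]^{\beta/2}\,F({\rm d}\omega)$ and noting that $(\gamma_\omega)^{\alpha/\beta}$ is again a variogram because $\alpha/\beta\leq 1$ gives the same conclusion via the sufficiency half of Theorem \ref{thm2c}; your version bypasses that theorem by building the conditionally Gaussian field $W$ directly, which is arguably preferable here since the proposition is stated before Theorem \ref{thm2c}. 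The two ingredients you flag are exactly the right ones and both hold: (i) step (ii) relies on Schoenberg's result that a nonnegative conditionally negative semidefinite kernel raised to a power in $(0,1]$ remains conditionally negative semidefinite, which is standard and is the same fact underlying the paper's treatment of madograms as $\sqrt{\gamma}$; and (ii) the measurable product-space construction of $W$ given $Z$ is handled at the same level of rigor as the paper's own proofs of Theorems \ref{thm1} and \ref{thm2}, which build $Y_T$ from a randomized index in exactly this way. Your normalizing constant also checks out: with $c=(2\,m_\beta^{2/\beta})^{-1}$ one gets $(2c)^{\beta/2}m_\beta=1$, hence $\gamma^{(\beta)}_W=\gamma^{(\alpha)}$.
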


\begin{proposition}
\label{prop02b}
    Let $\mathbb{X}$ be a set of points, $\alpha$ a positive real number, and $\gamma$ a variogram on $\mathbb{X} \times \mathbb{X}$. Then, the mapping $\gamma^{(\alpha)}$ defined by:
    \begin{equation}
        \label{rodog}
        \gamma^{(\alpha)}(x,y) = \frac{2^{\alpha-1}}{\sqrt{\pi}} \Gamma\left(\frac{\alpha+1}{2}\right) [\gamma(x,y)]^{\alpha/2}, \quad x, y \in \mathbb{X},
    \end{equation}
    is the variogram of order ${\alpha}$ of a first-order stationary Gaussian random field with variogram $\gamma$. 
\end{proposition}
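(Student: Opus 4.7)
The plan is to reduce the computation of $\gamma^{(\alpha)}(x,y)$ to the $\alpha$-th absolute moment of a standard Gaussian random variable, and then invoke the classical Gamma-integral formula for that moment.

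First I would observe that, since $Z$ is first-order stationary and Gaussian with variogram $\gamma$, the increment $Z(x) - Z(y)$ is itself a centered Gaussian random variable: the constant-mean assumption annihilates the mean of the increment, and by definition of the variogram the variance equals $2\gamma(x,y)$. Consequently, $Z(x) - Z(y)$ has the same distribution as $\sqrt{2\gamma(x,y)}\,U$, where $U \sim \mathcal{N}(0,1)$, so that
$$
\gamma^{(\alpha)}(x,y) \;=\; \tfrac{1}{2}\,\mathbb{E}\bigl\{|Z(x)-Z(y)|^\alpha\bigr\} \;=\; \tfrac{1}{2}\,\bigl(2\gamma(x,y)\bigr)^{\alpha/2}\,\mathbb{E}\bigl\{|U|^\alpha\bigr\}.
$$

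Next I would compute $\mathbb{E}\{|U|^\alpha\}$ by writing it as $2\int_0^{+\infty} u^\alpha \phi(u)\,{\rm d}u$, with $\phi$ the standard Gaussian density, and performing the change of variables $t = u^2/2$ to recognize a Gamma integral. This yields the standard identity
$$
\mathbb{E}\bigl\{|U|^\alpha\bigr\} \;=\; \frac{2^{\alpha/2}}{\sqrt{\pi}}\,\Gamma\!\left(\frac{\alpha+1}{2}\right),
$$
valid for every $\alpha > 0$. Plugging this into the previous display and simplifying the powers of $2$ gives exactly formula (\ref{rodog}).

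There is essentially no obstacle here: both steps are one-line manipulations relying only on the Gaussianity of the increments of $Z$ and on a textbook change of variables. The only mild care required is to confirm that the expectation is finite for every $\alpha > 0$, which is immediate from the super-exponential decay of the Gaussian density. The statement therefore holds with no further restriction on $\alpha$ beyond positivity, in agreement with Proposition \ref{alpha}.
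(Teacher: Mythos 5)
Your computation is correct and is the standard derivation: the paper does not actually prove Proposition \ref{prop02b} itself but attributes it to Matheron and Emery as a known result, and your two steps (Gaussianity of the increment with mean $0$ and variance $2\gamma(x,y)$ under first-order stationarity, followed by the Gamma-integral formula for $\mathbb{E}\{|U|^\alpha\}$) are exactly the expected argument. The constants check out, and the formula correctly specializes to Proposition \ref{prop02} at $\alpha=1$ and to $\gamma^{(2)}=\gamma$ at $\alpha=2$; the only point you leave implicit is the existence of a first-order stationary Gaussian random field with variogram $\gamma$, which follows from the Daniell--Kolmogorov extension theorem since $\gamma$ is conditionally negative semidefinite.
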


\subsection{New Results}

The following theorems and corollaries extend the results obtained for madograms. Proofs can be found in Appendix \ref{app_proof}. 

\begin{theorem}
\label{thm2c}
    For $\alpha>0$, a mapping ${\gamma}^{(\alpha)}: \mathbb{X} \times \mathbb{X} \to [0,+\infty)$ is the variogram of order $\alpha$ of a random field on $\mathbb{X}$ if, and only if, it has the following representation:
    \begin{equation}
        \label{pseudovariogalpha0}
        {\gamma}^{(\alpha)}(x,y) = \int_{0}^{+\infty} [{\gamma}_{\omega}(x,y)]^{\alpha/2} \, F({\rm d}\omega),
    \end{equation}
    where $F$ is a cumulative distribution function on $(0,+\infty)$ and, for all $\omega \in (0,+\infty)$, ${\gamma}_{\omega}$ is a variogram on $\mathbb{X} \times \mathbb{X}$. 
\end{theorem}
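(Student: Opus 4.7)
The plan is to extend the proof of Theorem \ref{thm2} (the case $\alpha=1$) to arbitrary $\alpha>0$, following the same two-step strategy: build the random field from Gaussian ingredients for sufficiency, and use the realizations of $Z$ themselves for necessity.

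For the sufficiency direction, I would start from the representation and build a mixed Gaussian random field. Let $T$ be a random variable on $(0,+\infty)$ with cumulative distribution function $F$, and, conditionally on $T=\omega$, let $Y_{\omega}$ be a centered Gaussian random field with variogram $\gamma_{\omega}$ (which exists on any $\mathbb{X}$ since any variogram arises from a Gaussian random field). Set $Z(x) = c_{\alpha} Y_T(x)$ where $c_{\alpha} = \bigl(\sqrt{\pi}/[2^{\alpha-1}\Gamma(\tfrac{\alpha+1}{2})]\bigr)^{1/\alpha}$, chosen so that Proposition \ref{prop02b} gives $\tfrac12 \mathbb{E}\{|Z(x)-Z(y)|^{\alpha}\mid T=\omega\} = [\gamma_{\omega}(x,y)]^{\alpha/2}$. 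Taking the outer expectation in $T$ yields the desired representation. Measurability of $\omega \mapsto [\gamma_{\omega}(x,y)]^{\alpha/2}$ is obtained as in the $\alpha=1$ case, after possibly enlarging the probability space.

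For necessity, suppose $\gamma^{(\alpha)}$ is the variogram of order $\alpha$ of a random field $Z$ on $\mathbb{X}$. For each fixed $\omega\in\Omega$, define the deterministic mapping
\begin{equation*}
    \gamma_{\omega}(x,y) = 2^{-2/\alpha}\bigl(Z(x,\omega)-Z(y,\omega)\bigr)^{2}, \quad x, y \in \mathbb{X}.
\end{equation*}
This is itself a variogram on $\mathbb{X}\times\mathbb{X}$ because it is the variogram of the simple random field $x\mapsto 2^{-1/\alpha}\,Z(x,\omega)\,\varepsilon$, where $\varepsilon$ is an independent Rademacher variable. Raising to the power $\alpha/2$ collapses the constant: $[\gamma_{\omega}(x,y)]^{\alpha/2} = \tfrac12 |Z(x,\omega)-Z(y,\omega)|^{\alpha}$. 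Integrating against the underlying probability measure $P$ on $\Omega$ then gives
\begin{equation*}
    \int_{\Omega} [\gamma_{\omega}(x,y)]^{\alpha/2}\, P(\mathrm{d}\omega) = \tfrac12 \mathbb{E}\bigl\{|Z(x)-Z(y)|^{\alpha}\bigr\} = \gamma^{(\alpha)}(x,y).
\end{equation*}

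To bring the representation into the announced form, I would invoke the separability assumption recalled in \ref{app_background}: without loss of generality, $\Omega$ may be taken as an interval of $\mathbb{R}$, in particular $(0,+\infty)$, on which $P$ induces a cumulative distribution function $F$. The integral above then becomes the required integral on $(0,+\infty)$, and the map $\omega\mapsto \gamma_{\omega}$ is a family of variograms indexed by $(0,+\infty)$. The main obstacle is essentially the same one encountered in Theorem \ref{thm2}, namely the identification of the abstract probability space with $(0,+\infty)$ and the associated measurability of $\omega\mapsto [\gamma_{\omega}(x,y)]^{\alpha/2}$; this is handled through the separability hypothesis and the classical Rokhlin--Itô isomorphism cited in \ref{app_background}. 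Everything else reduces to a direct computation using the moment formulas for Gaussian and Rademacher variables.
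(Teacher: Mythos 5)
Your proposal is correct and follows essentially the same route as the paper, which treats Theorem \ref{thm2c} as the direct extension of Theorem \ref{thm2}: necessity via the conditional negative semidefiniteness of $(x,y)\mapsto (Z(x,\omega)-Z(y,\omega))^2$ for fixed $\omega$ followed by the separability/Rokhlin identification of $\Omega$ with $(0,+\infty)$, and sufficiency via Proposition \ref{prop02b} applied to a suitably rescaled Gaussian field mixed over $T\sim F$. The only blemish is a harmless constant: the field $x\mapsto 2^{-1/\alpha}Z(x,\omega)\varepsilon$ has variogram $\tfrac12\gamma_{\omega}$ rather than $\gamma_{\omega}$ (you need an extra factor $\sqrt{2}$, or simply cite that positive multiples of conditionally negative semidefinite, symmetric, diagonal-vanishing mappings are variograms), which does not affect the argument.
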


\begin{theorem}
\label{orderineq}
For $\alpha \leq 1$, any variogram of order $\alpha$ fulfills the inequalities stated in Theorem \ref{thm5}. For $\alpha \geq 1$, it fulfills the following triangle inequality: 
\begin{equation*}
    [{\gamma}^{(\alpha)}(x_1,x_2)]^{1/\alpha} + [{\gamma}^{(\alpha)}(x_2,x_3)]^{1/\alpha} \geq [{\gamma}^{(\alpha)}(x_1,x_3)]^{1/\alpha}, \quad x_1, x_2, x_3 \in \mathbb{X}. 
\end{equation*} 
\end{theorem}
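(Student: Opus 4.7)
The plan is to prove the two parts separately.

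Part 2 (triangle inequality, $\alpha \geq 1$). This is a direct application of Minkowski's inequality in $L^\alpha(\Omega,\mathcal{A},P)$. Setting $U = Z(x_1)-Z(x_2)$ and $V = Z(x_2)-Z(x_3)$, so that $U+V = Z(x_1)-Z(x_3)$, Minkowski gives $\|U+V\|_\alpha \leq \|U\|_\alpha + \|V\|_\alpha$, where $\|W\|_\alpha = (\mathbb{E}|W|^\alpha)^{1/\alpha}$. Since $\|Z(x)-Z(y)\|_\alpha = (2\gamma^{(\alpha)}(x,y))^{1/\alpha}$, dividing both sides by $2^{1/\alpha}$ produces the claimed inequality. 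This piece is routine.

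Part 1 (inequalities of Theorem \ref{thm5} for $\alpha \leq 1$). The strategy is to show that any variogram of order $\alpha \in (0,1]$ is in fact a madogram. Starting from the representation $\gamma^{(\alpha)}(x,y) = \int_0^{+\infty} [\gamma_\omega(x,y)]^{\alpha/2}\, F({\rm d}\omega)$ furnished by Theorem \ref{thm2c}, I would rewrite the integrand as $\sqrt{\gamma_\omega(x,y)^\alpha}$. Schoenberg's subordination theorem ensures that $\gamma_\omega^\alpha$ is again conditionally negative semidefinite, hence again a variogram, whenever $\gamma_\omega$ is one and $\alpha \in (0,1]$. This identifies $\gamma^{(\alpha)}$ as a madogram in the sense of Theorem \ref{thm2}, so the unconditional inequalities (1)--(4) of Theorem \ref{thm5} follow immediately.

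For the conditional inequalities (5)--(9), I would additionally assume that $Z$ has stationary marginals with values in $[0,1]$; the upper bound (5) is then direct from $|Z(x)-Z(y)|^\alpha \leq 1$. For (6)--(9), I would express $\gamma^{(\alpha)}$ as a non-negative mixture of first-order stationary indicator variograms by combining the concave integral representation $t^\alpha = \alpha(1-\alpha)\int_0^\infty \min(t,s)\, s^{\alpha-2}\, ds$, the identity $\min(t,s) = \int_0^s \mathbf{1}_{u<t}\, du$ evaluated at $t=|Z(x)-Z(y)|$, and a paired-threshold decomposition of the event $\{|Z(x)-Z(y)|>u\}$ into two-level events of the form $\{Z(x)>v+u,\, Z(y)\leq v\}$ and $\{Z(y)>v+u,\, Z(x)\leq v\}$ integrated over $v$. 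The inequalities for indicator variograms in Theorem \ref{thm4} then transfer to $\gamma^{(\alpha)}$ by linearity of the quadratic forms $\sum_{k,\ell}\lambda_k\lambda_\ell\, g(x_k,x_\ell)$ in $g$. The main obstacle is exactly this last step: for $\alpha<1$, $|Z(x)-Z(y)|^\alpha$ is not a single-threshold integral of indicator differences (as it is when $\alpha=1$), so the paired-threshold weights must be tracked carefully to ensure that the resulting mixture has total mass at most $1/2$ and that the sharp constants in (5)--(9) are preserved.
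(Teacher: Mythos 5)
Your Minkowski argument for $\alpha\geq 1$ is exactly the paper's proof of the second claim, and your reduction of the first claim to the madogram case is also the paper's route: the paper simply writes ``any variogram of order $\alpha\leq 1$ is also a madogram, thus the inequalities of Theorem \ref{thm5} apply,'' leaning on Proposition \ref{alpha}. Your explicit derivation of that reduction --- rewriting $[\gamma_\omega]^{\alpha/2}=\sqrt{\gamma_\omega^\alpha}$ in the representation of Theorem \ref{thm2c} and invoking Schoenberg subordination to see that $\gamma_\omega^\alpha$ is again a variogram, so that Theorem \ref{thm2} applies --- is correct and in fact more self-contained than the paper's one-liner. Up to this point, the unconditional inequalities (1)--(4) of Theorem \ref{thm5} are fully established, and this is all the paper's two-sentence proof actually does.

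The trouble is your treatment of the conditional inequalities (5)--(9). The paired-threshold decomposition does not close: $\int_{\mathbb{R}}\mathsf{1}_{\{Z(x)>v+u,\,Z(y)\leq v\}}\,{\rm d}v$ equals the ramp $(Z(x)-Z(y)-u)^+$, not the indicator $\mathsf{1}_{\{Z(x)-Z(y)>u\}}$, so the bookkeeping you describe does not reproduce $\mathsf{1}_{\{|Z(x)-Z(y)|>u\}}$; worse, each two-level event $\{Z(x)>v+u,\,Z(y)\leq v\}$ with $u>0$ couples two \emph{different} excursion sets $\mathsf{1}_{Z\geq v+u}$ and $\mathsf{1}_{Z\geq v}$, so its probability is a cross-term between distinct indicator fields and is not a quantity to which Theorem \ref{thm4} applies. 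You correctly flag this as the obstacle, but it is not a bookkeeping issue to be ``tracked carefully'' --- the single-field indicator-variogram structure is genuinely absent from that decomposition. The paper sidesteps all of this: under the hypothesis that $Z$ is $[0,1]$-valued with stationary marginals, Theorem \ref{mad2ind} (via the layer-cake identity $\gamma^{(1)}=\int_0^1 g_\lambda\,{\rm d}\lambda$, a genuine convex mixture of single-threshold excursion variograms) shows the madogram is \emph{exactly} a first-order stationary indicator variogram, and Theorem \ref{order2ind} extends this to order $\alpha\leq 1$; the conditional inequalities then follow wholesale from Theorem \ref{thm4}. Replacing your decomposition by an appeal to Theorem \ref{order2ind} repairs the argument.
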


\begin{theorem}
    \label{order2ind}
For $\alpha \leq 1$, a mapping $\gamma^{(\alpha)}: \mathbb{X} \times \mathbb{X} \to [0,\frac{1}{2}]$ is the variogram of order $\alpha$ of a random field on $\mathbb{X}$ with values in $[0,1]$ and stationary marginal distribution if, and only if, $\gamma^{(\alpha)}$ is the variogram of a first-order stationary indicator random field.   
\end{theorem}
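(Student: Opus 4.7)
The forward direction ($\Leftarrow$) is immediate: if $\gamma^{(\alpha)}$ is the variogram of a first-order stationary indicator random field $Z$, then $|Z(x)-Z(y)|\in\{0,1\}$ forces $|Z(x)-Z(y)|^\alpha=|Z(x)-Z(y)|$ for every $\alpha>0$, so $\gamma^{(\alpha)}$ coincides with the variogram of order $\alpha$ of $Z$. The indicator field trivially takes values in $[0,1]$ and has a Bernoulli stationary marginal, and the bound $\gamma^{(\alpha)}\le\tfrac12$ follows from $|Z(x)-Z(y)|\le1$.

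For the reverse direction ($\Rightarrow$), I would mimic the proof of Theorem~\ref{mad2ind}, whose $\alpha=1$ argument exploits the identity $|a-b|=\int_0^1|\mathsf{1}_{a\ge t}-\mathsf{1}_{b\ge t}|\,dt$ to present the madogram as a probability mixture of excursion-set indicator variograms. The required analog for $\alpha\in(0,1]$ is the $L^1$-representation
\[
|a-b|^\alpha \;=\; \int_{\Omega'}|\phi_a(u)-\phi_b(u)|\,\mu(du), \qquad a,b\in[0,1],
\]
where $(\Omega',\mu)$ is a measure space and $\phi_a:\Omega'\to\{0,1\}$ are measurable. Such an embedding exists because $d_\alpha(a,b)=|a-b|^\alpha$ is a metric of negative type for $\alpha\in(0,1]$ by Schoenberg's theorem, and therefore admits an $L^1$ isometric embedding via cut semimetrics (Bretagnolle--Dacunha-Castelle--Krivine); an explicit realization follows from the L\'evy--Khintchine formula $|z|^\alpha=c_\alpha\int_0^{+\infty}(1-e^{-|z|s})s^{-1-\alpha}ds$ by taking $\phi_a^{(s)}$ to be the parity of a Poisson point process of intensity $s$ on $[0,a]$, integrated against the L\'evy measure $\nu(ds)=c_\alpha s^{-1-\alpha}ds$.

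Granting this representation, I would set $W(x,\omega,u)=\phi_{Z(x,\omega)}(u)$ on the product of $(\Omega,P)$ with $(\Omega',\mu)$. A Fubini computation then gives
\[
\tfrac{1}{2}\,\mathbb{E}\!\left\{(W(x)-W(y))^2\right\} \;=\; \tfrac{1}{2}\,\mathbb{E}_Z\!\left[\mu\{\phi_{Z(x)}\ne\phi_{Z(y)}\}\right] \;=\; \tfrac{1}{2}\,\mathbb{E}_Z\!\left\{|Z(x)-Z(y)|^\alpha\right\} \;=\; \gamma^{(\alpha)}(x,y),
\]
so $W$ has variogram $\gamma^{(\alpha)}$; first-order stationarity of $W$ follows from the stationarity of the marginal of $Z$, since $\mathbb{E}\{W(x)\}=\mathbb{E}_Z[\mu(\phi_{Z(x)}=1)]$ depends only on the common law of $Z(x)$.

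The main obstacle is that for $\alpha<1$, the measure $\mu$ must necessarily have infinite total mass: a quick triangle-inequality computation yields $\mu(\phi_a\ne\phi_0)+\mu(\phi_a\ne\phi_1)=a^\alpha+(1-a)^\alpha>1=\mu(\phi_0\ne\phi_1)$ for $a\in(0,1)$, ruling out any probability representation with $\{0,1\}$-valued integrands (and in fact with $[0,1]$-valued integrands too). Consequently, the ``random field'' $W$ lives on a $\sigma$-finite space and is not an indicator random field in the strict sense; one must conclude via a limiting procedure. I would truncate the L\'evy measure to $[1/n,n]$ to obtain finite positive combinations of indicator variograms $g_t$, rescale by an appropriate scalar $\le1$ so as to land in the convex hull (Proposition~\ref{thm3}(1)), and pass to the limit using the closedness of the indicator-variogram set (Proposition~\ref{closedconvex}), monitoring that the limit reproduces $\gamma^{(\alpha)}$. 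Making this normalization compatible with the bound $\gamma^{(\alpha)}\le\tfrac12$ across the full $\sigma$-finite scale range is the technical crux of the argument.
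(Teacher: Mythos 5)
Your ``if'' direction is correct and matches the paper's. The ``only if'' direction, however, contains a genuine gap, and it sits exactly where you flag it. The L\'evy--Khintchine identity $|z|^\alpha=c_\alpha\int_0^{+\infty}(1-e^{-s|z|})\,s^{-1-\alpha}\,{\rm d}s$ does express $\gamma^{(\alpha)}$ as an integral of bona fide indicator variograms $h_s(x,y)=\tfrac{1}{4}\,\mathbb{E}\{1-\exp(-2s\lvert Z(x)-Z(y)\rvert)\}$ (your parity construction is sound, and each $h_s$ is the variogram of a first-order stationary indicator field because the marginal of $Z$ is stationary), but the mixing measure has infinite total mass --- as your own computation $a^\alpha+(1-a)^\alpha>1$ correctly forces for $\alpha<1$. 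The only tools you invoke to pass from the pieces to the whole are Proposition \ref{thm3} and the closedness of Proposition \ref{closedconvex}, and these handle \emph{sub-probability} mixtures only: a truncation $\int_{1/n}^{n}h_{s}\,\nu({\rm d}s)$ equals $M_n$ times a convex combination of indicator variograms with $M_n=\nu([1/n,n])\to+\infty$, and the class of indicator variograms is not a cone (it is bounded by $\tfrac{1}{2}$; doubling an indicator variogram does not in general preserve validity), so such a truncation is not known to be an indicator variogram. If you rescale by $1/M_n$ to land in the convex hull, the pointwise limit is $0$, not $\gamma^{(\alpha)}$; if you do not rescale, closedness gives you nothing. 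The obstruction is structural, so no choice of normalization ``compatible across the full $\sigma$-finite scale range'' exists, and the argument does not close.

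The paper avoids the construction altogether: by Proposition \ref{alpha}, a variogram of order $\alpha\leq 1$ is also a variogram of order $1$, i.e.\ a madogram, and under the hypotheses of the theorem Theorem \ref{mad2ind} then applies and yields that $\gamma^{(\alpha)}$ is the variogram of a first-order stationary indicator random field. All the hard work is thus delegated to the order-raising Proposition \ref{alpha} (a subordination result of Matheron and Emery, quoted without proof) together with the already-established $\alpha=1$ case, where the level-set decomposition $\lvert a-b\rvert=\int_0^1\lvert\mathsf{1}_{a\geq t}-\mathsf{1}_{b\geq t}\rvert\,{\rm d}t$ is a genuine probability mixture and the corner-positive inequalities (\ref{corner}) can be integrated directly. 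To salvage your direct route you would have to verify the corner-positive inequalities for the infinite mixture $\int_0^{+\infty}h_s\,\nu({\rm d}s)$ itself, which in effect reproduces that reduction.
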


\begin{corollary}
\label{alpha2beta}
    The variogram of order $\alpha$ of a random field with bounded values and stationary marginal distribution is also a variogram of order $\beta$, whatever $\alpha$ and $\beta$ in $(0,1]$.  
\end{corollary}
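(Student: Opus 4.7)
\textbf{Proof proposal for Corollary \ref{alpha2beta}.} The plan is to use Theorem \ref{order2ind} as a two-way bridge: it translates any bounded $\alpha$-variogram into an indicator variogram and back into a $\beta$-variogram. The only subtleties are a harmless rescaling to bring the field into $[0,1]$ and the key observation that, on $\{0,1\}$-valued random fields, the variogram of order $\alpha$ does not depend on $\alpha$.

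First I would normalize. If $Z$ has stationary marginal distribution valued in $[a,b]$, set $Z' = (Z-a)/(b-a)$. Then $Z'$ is valued in $[0,1]$ with stationary marginal distribution, and a direct computation yields
\begin{equation*}
\gamma^{(\alpha)}_{Z'}(x,y) = (b-a)^{-\alpha}\, \gamma^{(\alpha)}_{Z}(x,y).
\end{equation*}
Since $|Z'(x)-Z'(y)|^\alpha \in [0,1]$, one has $\gamma^{(\alpha)}_{Z'}(x,y)\in[0,\tfrac{1}{2}]$. Theorem \ref{order2ind} applied with order $\alpha$ then provides a first-order stationary indicator random field $Y: \mathbb{X} \to \{0,1\}$ whose variogram coincides with $\gamma^{(\alpha)}_{Z'}$.

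The crucial degeneracy is that, for every $\beta>0$ and every $(u,v)\in\{0,1\}^2$, one has $|u-v|^\beta = |u-v|$. Hence the variogram of order $\beta$ of $Y$ coincides with its usual variogram, so $\gamma^{(\beta)}_Y = \gamma^{(\alpha)}_{Z'}$. This already proves that $\gamma^{(\alpha)}_{Z'}$ is the $\beta$-variogram of the bounded, stationary-marginal random field $Y$. To recover $\gamma^{(\alpha)}_Z$ exactly rather than up to a constant, set $c=(b-a)^{\alpha/\beta}$ and $W=cY$; then $W$ is valued in $\{0,c\}$ with stationary marginal distribution, and
\begin{equation*}
\gamma^{(\beta)}_W \;=\; c^\beta\, \gamma^{(\beta)}_Y \;=\; (b-a)^\alpha\, \gamma^{(\alpha)}_{Z'} \;=\; \gamma^{(\alpha)}_Z,
\end{equation*}
which is the desired conclusion.

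There is no genuine obstacle once Theorem \ref{order2ind} is in hand: the whole argument reduces to the identity $|u-v|^\beta=|u-v|$ on $\{0,1\}$, which collapses all orders into one for indicator random fields and thereby makes the set of indicator variograms the universal pivot through which any bounded stationary-marginal $\alpha$-variogram can be re-read as a $\beta$-variogram. The mildest technical point to verify is that $Z'$ and $W$ indeed inherit the stationary marginal property from $Z$ and $Y$ respectively, which is immediate from the fact that both normalization and multiplication by a constant commute with the law of $Z(x)$ not depending on $x$.
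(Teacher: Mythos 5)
Your proposal is correct and follows essentially the same route as the paper: normalize the field into $[0,1]$, pass through Theorem \ref{order2ind} to obtain an indicator random field whose (order-independent) variogram equals the normalized $\alpha$-variogram, and rescale by $(b-a)^{\alpha/\beta}$ to recover $\gamma^{(\alpha)}_Z$ as a $\beta$-variogram. The only cosmetic difference is that the paper centers the support at $[-\tfrac{a}{2},\tfrac{a}{2}]$ and writes the pivot as $\tfrac{Z}{a}+\tfrac{1}{2}$, while you normalize to $[0,1]$ directly; the scaling identity and the use of the degeneracy of orders on $\{0,1\}$-valued fields are the same.
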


\section{Complements and Results in the Multivariate Setting}
\label{extensionmultivariate}
\vspace*{12pt}

In this section, $p$ be a positive integer. Also, all operations involving matrix-valued mappings, such as multiplication, integration or square rooting, are understood as elementwise.

\subsection{Matrix-valued Pseudo-variograms}
\label{pseud}

The pseudo-variogram of a $p$-variate random field $\boldsymbol{Z} = (Z_1,\ldots,Z_p)$ defined on $\mathbb{X}$ is the matrix-valued mapping $\boldsymbol{\gamma} = [\gamma_{ij}]_{i,j=1}^p$ whose entries are 
\begin{equation*}
    \gamma_{ij}(x,y) = \frac{1}{2} \mathbb{E}\{ [Z_i(x) - Z_j(y)]^2 \}, \quad x, y \in \mathbb{X}, \quad i,j = 1,\ldots,p.
\end{equation*}

This definition follows that of Clark et al. \citep{Clark1989}, which uses an expected squared difference and not a variance in the definition of the cross-entries. Note that the pseudo-variogram is always well-defined for an indicator random field, which takes its values in $\{0,1\}^p$. 
For a standard Gaussian random field (with zero mean and unit variance), the pseudo-variogram is related to the matrix-valued covariance $\boldsymbol{\rho}$ through
\begin{equation}
\label{vartocov}
    \boldsymbol{\gamma}(x,y) = \boldsymbol{1} - \boldsymbol{\rho}(x,y), \quad x, y \in \mathbb{X},
\end{equation}
where $\boldsymbol{1}$ is the all-ones matrix and $\boldsymbol{\rho}(x,y) := [\mathbb{E}\{ Z_i(x) \, Z_j(y)\}]_{i,j=1}^p$.

A matrix-valued mapping $\boldsymbol{\gamma}= [\gamma_{ij}]_{i,j=1}^p$ on $\mathbb{X} \times \mathbb{X}$ is a pseudo-variogram if, and only if, the following conditions hold \citep{Dorr2023}:
\begin{itemize}
    \item $\gamma_{ii}(x,x) = 0$ for any $i \in \{1,\ldots,p\}$ and $x \in \mathbb{X}$;
    \item $\gamma_{ij}(x,y) = \gamma_{ji}(y,x)$ for any $i,j \in \{1,\ldots,p\}$ and and $x,y \in \mathbb{X}$;
    \item negative-type inequalities: for all $x_1,\ldots,x_n \in \mathbb{X}$ and real weights $\{\lambda_{ik}: i=1,\ldots,p; k=1,\ldots,n\}$ such that $\sum_{i=1}^p \sum_{k=1}^n \lambda_{ik} = 0$,
    \begin{equation}
    \label{negtype}
    \sum_{i=1}^p \sum_{j=1}^p \sum_{k=1}^n \sum_{\ell=1}^n \lambda_{i,k} \, \lambda_{j,\ell} \, \gamma_{ij}(x_k,x_\ell) \leq 0.
    \end{equation}
\end{itemize}

More generally, one can define the pseudo-variogram of order $\alpha$ of a $p$-variate random field as the matrix-valued mapping $\boldsymbol{\gamma}^{(\alpha)}$ with entries
\begin{equation*}
    \gamma^{(\alpha)}_{i,j}(x,y) = \frac{1}{2} \mathbb{E} \{ \lvert Z_i(x) - Z_j(y) \rvert^{\alpha} \}, \quad x, y \in \mathbb{X}, \quad i,j = 1,\ldots,p,
\end{equation*}
provided that the expected value exists. For $\alpha=1$, this yields the pseudo-madogram. 

\subsection{Characterization of Indicator Pseudo-variograms}

\begin{theorem}
\label{thm1b}
    A matrix-valued mapping $\boldsymbol{g}: \mathbb{X} \times \mathbb{X} \to [0,+\infty)^{p \times p}$ is the pseudo-variogram of a first-order stationary indicator vector random field on $\mathbb{X}$ if, and only if, it has one of the following equivalent representations:
    \begin{equation}
        \label{indiccrossvariog}
        \begin{split}
        \boldsymbol{g}(x,y) &= \frac{1}{2\pi} \int_{0}^{+\infty} \arccos \boldsymbol{\rho}_{\omega}(x,y) \, F({\rm d}\omega) \\
        &= \frac{1}{\pi} \int_{0}^{+\infty} \arcsin \sqrt{\frac{\boldsymbol{\gamma}_{\omega}(x,y)}{2}} \, F({\rm d}\omega), \quad x,y \in \mathbb{X},
        \end{split}
    \end{equation}
    where $F$ is a cumulative distribution function on $(0,+\infty)$ and, for all $\omega \in (0,+\infty)$, $\boldsymbol{\rho}_{\omega}$ is a matrix-valued correlation function on $\mathbb{X} \times \mathbb{X}$ and $\boldsymbol{\gamma}_{\omega}$ is the pseudo-variogram of a $p$-variate standard Gaussian random field on $\mathbb{X} \times \mathbb{X}$. 
\end{theorem}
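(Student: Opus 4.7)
The plan is to mirror the structure of the proofs of Theorem \ref{thm1} and its corollaries, lifted from the scalar to the matrix-valued setting. I would first note that the two representations in (\ref{indiccrossvariog}) are equivalent via the trigonometric identity $\arccos\rho = 2\arcsin\sqrt{(1-\rho)/2}$ combined with relation (\ref{vartocov}), which gives $\boldsymbol{\gamma}_\omega = \boldsymbol{1} - \boldsymbol{\rho}_\omega$ when $\boldsymbol{\gamma}_\omega$ is the pseudo-variogram of a $p$-variate standard Gaussian random field. Thus only the arccosine form needs to be established.

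For \emph{sufficiency}, I would proceed constructively. For each $\omega \in (0,+\infty)$, let $\boldsymbol{Y}_\omega = (Y_{\omega,1},\ldots,Y_{\omega,p})$ be a $p$-variate zero-mean unit-variance Gaussian random field on $\mathbb{X}$ with matrix-valued correlation $\boldsymbol{\rho}_\omega$. The median indicator vector $\boldsymbol{I}_\omega(x) = (\mathsf{1}_{Y_{\omega,i}(x) \geq 0})_{i=1}^p$ has componentwise constant mean $1/2$, and Sheppard's arcsine formula applied to each pair $(Y_{\omega,i}(x), Y_{\omega,j}(y))$ yields pseudo-variogram entries $\frac{1}{2\pi}\arccos \rho_{ij,\omega}(x,y)$. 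Drawing an independent random variable $T$ with distribution $F$ and setting $\boldsymbol{Z} = \boldsymbol{I}_T$, conditioning on $T$ and using linearity of expectation produces the integral representation for the pseudo-variogram of $\boldsymbol{Z}$, which remains a first-order stationary indicator vector random field.

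For \emph{necessity}, I would first perform a symmetrization analogous to the one described in Section \ref{sec:discussion}: for any indicator vector random field $\boldsymbol{Z}$ with possibly non-$1/2$ component means, replace it by $\widetilde{\boldsymbol{Z}}$ equal to $\boldsymbol{Z}$ with probability $1/2$ and to $\boldsymbol{1} - \boldsymbol{Z}$ with probability $1/2$. Each component of $\widetilde{\boldsymbol{Z}}$ then has mean $1/2$, while the pseudo-variogram is preserved since $(\widetilde Z_i(x) - \widetilde Z_j(y))^2 = (Z_i(x) - Z_j(y))^2$ under either branch of the flip. I would then invoke a multivariate extension of Corollary \ref{cor01}, asserting that every mean-$1/2$ indicator vector random field is the median indicator of a $p$-variate Hermitian random field, i.e., one whose bivariate distributions across components and locations are all mixtures of standard bivariate Gaussians. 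An isofactorial decomposition of this Hermitian field in Hermite polynomials yields a family of matrix-valued correlations $\{\boldsymbol{\rho}_\omega\}_\omega$ together with a mixing distribution $F$ on $(0,+\infty)$, and applying Sheppard's formula pairwise produces the arccosine representation.

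The principal technical obstacle is the joint matrix-valued character of $\boldsymbol{\rho}_\omega$ in the necessity step. A naive pairwise application of the univariate Theorem \ref{thm1} to each entry $g_{ij}$ would give scalar correlations $\rho_{ij,\omega}$ and scalar mixing measures $F_{ij}$ depending on $(i,j)$, but these need not be simultaneously realizable as one matrix-valued correlation structure sharing a common $F$. To align all pairs under a single $\omega$-indexed family I would either (i) build $\boldsymbol{\rho}_\omega$ from a joint Hermitian representation, extending Corollary \ref{cor01} to the multivariate setting via a closed-convex-hull argument (analogous to Proposition \ref{closedconvex} and Corollary \ref{hull}) showing that median indicator pseudo-variograms of $p$-variate Gaussian fields densely generate the convex set of indicator pseudo-variograms, or (ii) reduce to the univariate setting on the augmented set $\mathbb{X} \times \{1,\ldots,p\}$ (with componentwise means first equalized to $1/2$) and track how the resulting univariate correlation functions lift back to matrix-valued ones consistent with (\ref{negtype}).
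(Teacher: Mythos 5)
Your sufficiency argument and your reduction of the two representations to one another (via $\arccos \rho = 2\arcsin\sqrt{(1-\rho)/2}$ together with $\boldsymbol{\gamma}_\omega = \boldsymbol{1}-\boldsymbol{\rho}_\omega$ from Eq.~(\ref{vartocov})) are correct and match the paper. The gap is in your primary route for necessity. You invoke ``a multivariate extension of Corollary \ref{cor01}'' --- that every mean-$1/2$ indicator vector field is the median indicator of a $p$-variate Hermitian field --- but in the paper Corollary \ref{cor01} is itself a \emph{consequence} of Theorem \ref{thm1}: the Hermitian field is precisely the mixture $Y_T$ built in the sufficiency step. Taking its multivariate analogue as the engine of the necessity direction is circular, since that assertion is essentially the representation (\ref{indiccrossvariog}) you are trying to establish, and your appeal to an ``isofactorial decomposition'' does not explain how a single mixing law $F$ and a single $\omega$-indexed family of matrix-valued correlations would emerge from it.

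The paper's necessity argument needs none of this machinery and requires no symmetrization to mean $1/2$. Setting $\widetilde{\boldsymbol{Z}} = 2\boldsymbol{Z}-\boldsymbol{1}$, with values in $\{-1,1\}^p$, one defines for each sample point $\omega$ the realization-wise, rank-one matrix-valued mapping $\rho_{\omega,ij}(x,y) = \widetilde{Z}_i(x,\omega)\,\widetilde{Z}_j(y,\omega)$. This is automatically a matrix-valued correlation function (the quadratic forms in (\ref{negtype}) reduce to perfect squares, and the diagonal entries equal $1$), and since it only takes the values $\pm 1$, the identity $t = \tfrac{2}{\pi}\arcsin t$ on $\{-1,1\}$ turns the noncentered covariance $\mathbb{E}[\widetilde{Z}_i(x)\widetilde{Z}_j(y)] = \int_\Omega \rho_{\omega,ij}(x,y)\,P(\mathrm{d}\omega)$ into the arcsine integral, whence $\boldsymbol{g} = \tfrac{1}{4}(\boldsymbol{1}-\widetilde{\boldsymbol{C}}) = \tfrac{1}{2\pi}\int_\Omega \arccos\boldsymbol{\rho}_\omega\,P(\mathrm{d}\omega)$; separability then lets one take $\Omega=(0,+\infty)$ and $P=F$. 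Your fallback option (ii) --- viewing the $p$-variate field as a univariate field on $\mathbb{X}\times\{1,\ldots,p\}$ and lifting the resulting scalar correlations back to matrix-valued ones --- is essentially this argument and would close the gap, but you leave it as one of two unexecuted alternatives rather than carrying it out.
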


\subsection{Characterization of Pseudo-variograms of Order $\alpha$}

\begin{theorem}
\label{thm2d}
    For $\alpha>0$, a matrix-valued mapping $\boldsymbol{\gamma}^{(\alpha)}: \mathbb{X} \times \mathbb{X} \to [0,+\infty)^{p \times p}$ is the pseudo-variogram of order $\alpha$ of a $p$-variate random field on $\mathbb{X}$ if, and only if, it has the following representation:
    \begin{equation}
        \label{pseudovariogalpha}
        \boldsymbol{\gamma}^{(\alpha)}(x,y) = \int_{0}^{+\infty} [\boldsymbol{\gamma}_{\omega}(x,y)]^{\alpha/2} \, F({\rm d}\omega),
    \end{equation}
    where $F$ is a cumulative distribution function on $(0,+\infty)$ and, for all $\omega \in (0,+\infty)$, $\boldsymbol{\gamma}_{\omega}$ is the pseudo-variogram of a $p$-variate random field on $\mathbb{X} \times \mathbb{X}$. 
\end{theorem}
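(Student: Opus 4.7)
The plan is to reduce Theorem \ref{thm2d} to its univariate counterpart Theorem \ref{thm2c} via an ``unfolding'' trick. Set $\mathbb{X}' := \mathbb{X} \times \{1,\ldots,p\}$ and, for any $p$-variate random field $\boldsymbol{Z} = (Z_1,\ldots,Z_p)$ on $\mathbb{X}$, associate the univariate random field $\tilde{Z}$ on $\mathbb{X}'$ defined by $\tilde{Z}(x,i) := Z_i(x)$. By the very definition of pseudo-variogram of order $\alpha$, the $\alpha$-order variogram of $\tilde{Z}$ satisfies $\tilde{\gamma}^{(\alpha)}((x,i),(y,j)) = \frac{1}{2}\mathbb{E}\,|Z_i(x) - Z_j(y)|^{\alpha} = \gamma^{(\alpha)}_{ij}(x,y)$, so it encodes exactly the entries of the matrix-valued mapping $\boldsymbol{\gamma}^{(\alpha)}$.

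The first step is to establish the companion bijection at the level of pseudo-variograms: a mapping $\boldsymbol{\gamma} : \mathbb{X} \times \mathbb{X} \to [0,+\infty)^{p \times p}$ is a pseudo-variogram on $\mathbb{X}$ if and only if $\tilde{\gamma}((x,i),(y,j)) := \gamma_{ij}(x,y)$ is a variogram on $\mathbb{X}'$. This follows from comparing the characterization recalled in Section \ref{pseud} (vanishing on the same-variable diagonal, the symmetry $\gamma_{ij}(x,y) = \gamma_{ji}(y,x)$, and the joint negative-type inequalities (\ref{negtype})) with Schoenberg's conditions (\ref{condneg0})--(\ref{condneg}) on $\mathbb{X}'$: the three pairs of conditions match term for term once the weights $\lambda_{i,k}$ in (\ref{negtype}) are re-indexed as weights attached to the points $(x_k,i) \in \mathbb{X}'$.

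With both bijections in hand, Theorem \ref{thm2d} follows from Theorem \ref{thm2c} applied to $\tilde{\gamma}^{(\alpha)}$. For sufficiency, any representation $\boldsymbol{\gamma}^{(\alpha)}(x,y) = \int_0^{+\infty}[\boldsymbol{\gamma}_\omega(x,y)]^{\alpha/2} F(\mathrm{d}\omega)$ with each $\boldsymbol{\gamma}_\omega$ a pseudo-variogram unfolds entrywise to $\tilde{\gamma}^{(\alpha)} = \int_0^{+\infty} \tilde{\gamma}_\omega^{\alpha/2} F(\mathrm{d}\omega)$ with each $\tilde{\gamma}_\omega$ a variogram on $\mathbb{X}'$; Theorem \ref{thm2c} then yields a univariate random field $\tilde{Z}$ on $\mathbb{X}'$ whose $\alpha$-order variogram is $\tilde{\gamma}^{(\alpha)}$, and folding back via $Z_i(x) := \tilde{Z}(x,i)$ produces a $p$-variate random field $\boldsymbol{Z}$ on $\mathbb{X}$ with the desired pseudo-variogram. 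For necessity, starting from a $p$-variate $\boldsymbol{Z}$ realizing $\boldsymbol{\gamma}^{(\alpha)}$, the associated $\tilde{Z}$ has $\alpha$-order variogram $\tilde{\gamma}^{(\alpha)}$, so Theorem \ref{thm2c} supplies an integral representation on $\mathbb{X}'$ which folds back entrywise to the required matrix-valued representation on $\mathbb{X}$.

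The only delicate point is the matching of the negative-type inequalities under unfolding: one must check that the constraint $\sum_{i=1}^p \sum_{k=1}^n \lambda_{i,k} = 0$ in (\ref{negtype}) coincides with the zero-sum constraint on the weights in the definition of a variogram on $\mathbb{X}'$. This is a direct re-indexing rather than a genuine difficulty, so once the bijection is established the remaining work reduces to invoking Theorem \ref{thm2c}.
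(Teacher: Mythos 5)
Your proof is correct, but it takes a genuinely different route from the paper's. The paper proves Theorem \ref{thm2d} by directly re-running the realization-wise argument of the univariate case in the matrix-valued setting: for necessity it observes that, for each fixed state $\omega$ of the sample space, the matrix-valued mapping $(x,y) \mapsto [d^2_{\text{E}}(Z_i(x,\omega),Z_j(y,\omega))]_{i,j=1}^p$ satisfies the three defining conditions of a pseudo-variogram (in particular the joint negative-type inequalities (\ref{negtype})), so that integrating its elementwise $\alpha/2$ power against $P({\rm d}\omega)$ yields the representation; for sufficiency it randomizes over a family of Gaussian vector random fields via Proposition \ref{prop02b}. You instead reduce the multivariate statement to the univariate Theorem \ref{thm2c} by unfolding the index set into $\mathbb{X}' = \mathbb{X}\times\{1,\ldots,p\}$, after checking that the Dörr-type characterization of pseudo-variograms on $\mathbb{X}$ recalled in Section \ref{pseud} matches Schoenberg's conditions (\ref{condneg0})--(\ref{condneg}) for variograms on $\mathbb{X}'$ term for term (the only point needing care being that an arbitrary finite subset of $\mathbb{X}'$ can always be padded with zero weights into the product-grid form of (\ref{negtype}), so the two families of inequalities coincide). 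Both arguments are sound. The paper's approach is self-contained and exhibits the realizing random field explicitly in the multivariate setting; yours is more modular, invoking Theorem \ref{thm2c} as a black box and isolating the pseudo-variogram/variogram correspondence on the expanded index set as the single new ingredient, which also makes it transparent that the same reduction would dispatch Theorems \ref{thm1b} and \ref{thm2b} at no extra cost.
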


For $\alpha=1$, this translates into the following theorem. 

\begin{theorem}
\label{thm2b}
    A matrix-valued mapping $\boldsymbol{\gamma}^{(1)}: \mathbb{X} \times \mathbb{X} \to [0,+\infty)^{p \times p}$ is the pseudo-madogram of a $p$-variate random field on $\mathbb{X}$ if, and only if, it has the following representation:
    \begin{equation}
        \label{pseudomadogram}
        \boldsymbol{\gamma}^{(1)}(x,y) = \int_{0}^{+\infty} \sqrt{\boldsymbol{\gamma}_{\omega}(x,y)} \, F({\rm d}\omega),
    \end{equation}
    where $F$ is a cumulative distribution function on $(0,+\infty)$ and, for all $\omega \in (0,+\infty)$, $\boldsymbol{\gamma}_{\omega}$ is the pseudo-variogram of a $p$-variate random field on $\mathbb{X} \times \mathbb{X}$. 
\end{theorem}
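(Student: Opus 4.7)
The plan is to recognize this theorem as the specialization to $\alpha=1$ of Theorem \ref{thm2d}: the pseudo-madogram coincides by definition with the pseudo-variogram of order $1$, and setting $\alpha=1$ in Eq. (\ref{pseudovariogalpha}) recovers Eq. (\ref{pseudomadogram}). Hence both implications reduce to proving the parent statement at this particular $\alpha$, which I sketch below.

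For the \emph{sufficiency} direction, I would build the desired random field as a Gaussian mixture indexed by $\omega$. For each $\omega\in(0,+\infty)$, I would pick a zero-mean $p$-variate Gaussian random field $\boldsymbol{Y}_\omega$ on $\mathbb{X}$ whose pseudo-variogram equals $\pi\boldsymbol{\gamma}_\omega$; the existence of such a field is the matrix-valued analog of the classical Schoenberg realization, granted by \citep{Dorr2023}. A direct computation then shows that $Y_{\omega,i}(x)-Y_{\omega,j}(y)$ is centered Gaussian with variance $2\pi\gamma_{\omega,ij}(x,y)$, whence $\tfrac{1}{2}\,\mathbb{E}\lvert Y_{\omega,i}(x)-Y_{\omega,j}(y)\rvert=\sqrt{\gamma_{\omega,ij}(x,y)}$. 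Letting $T$ be independent of everything else with cumulative distribution function $F$ and setting $\boldsymbol{Z}=\boldsymbol{Y}_T$, the law of total expectation delivers entrywise
\[
\gamma^{(1)}_{\boldsymbol{Z},ij}(x,y)=\int_0^{+\infty}\sqrt{\gamma_{\omega,ij}(x,y)}\,F(d\omega),
\]
which is exactly Eq. (\ref{pseudomadogram}).

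For the \emph{necessity} direction, I would invoke separability (see \ref{app_background}) to identify the sample space with $\Omega=(0,+\infty)$ equipped with a probability measure $F$. For each $\omega$, I would define
\[
\gamma_{\omega,ij}(x,y)=\tfrac{1}{4}\bigl(Z_i(x,\omega)-Z_j(y,\omega)\bigr)^2,
\]
which is the pseudo-variogram of the degenerate field $2^{-1/2}\boldsymbol{Z}(\cdot,\omega)$: symmetry is immediate, and the negative-type inequality (\ref{negtype}) collapses to the nonnegativity of $\bigl(\sum_{i,k}\lambda_{i,k}Z_i(x_k,\omega)\bigr)^2$ whenever $\sum_{i,k}\lambda_{i,k}=0$. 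Since $\sqrt{\gamma_{\omega,ij}(x,y)}=\tfrac{1}{2}\lvert Z_i(x,\omega)-Z_j(y,\omega)\rvert$, integrating against $F$ recovers $\gamma^{(1)}_{ij}(x,y)$ by definition of the pseudo-madogram.

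The main obstacle is the Gaussian realization used in sufficiency: showing that \emph{any} matrix-valued pseudo-variogram arises as the pseudo-variogram of some $p$-variate Gaussian random field. This extends the scalar Schoenberg-type characterization to the matrix-valued setting and is the ingredient imported from \citep{Dorr2023}. Once this is granted, both implications reduce to routine conditional-expectation and algebraic manipulations.
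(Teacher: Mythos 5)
Your proposal is correct and follows essentially the same route as the paper: the paper's proof of this theorem is precisely the observation that it is the $\alpha=1$ case of Theorem \ref{thm2d}, and your expansion of that reduction (Gaussian mixture for sufficiency via the matrix analog of Proposition \ref{prop02b}, and the degenerate-field pseudo-variogram $\frac{1}{4}(Z_i(x,\omega)-Z_j(y,\omega))^2$ together with separability for necessity) matches the paper's proofs of Theorems \ref{thm2} and \ref{thm2d}.
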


\section{Indicator Variogram of the Excursion Set of a Gaussian Random Field}
\label{excursion}
\vspace*{12pt}

Let $Z$ be a standard Gaussian random field on a set of points $\mathbb{X}$, with correlation function $\rho$. The following analytical expressions of the variogram $g_{\lambda}$ of $\lambda$-level excursion set of $Z$ are already known \citep{math75, Matheron1989}:
\begin{equation}
\label{matheron}
\begin{split}
    g_{\lambda} &= \frac{1}{2\pi} \int_{\rho}^1 \exp\left(-\frac{\lambda^2}{1+u}\right) \frac{{\rm d}u}{\sqrt{1-u^2}} \\
    &= \frac{1}{\pi} \exp\left(-\frac{\lambda^2}{2}\right) \sum_{k=1}^{+\infty} (1-\rho^k) \frac{H_{k-1}^2(\frac{\lambda}{\sqrt{2}})}{2^{k} k!}\\
    &= \frac{1}{2\pi \sqrt{2}} \exp\left(-\frac{\lambda^2}{2}\right) \sum_{k=0}^{+\infty} \frac{(-1)^k (1-\rho)^{k+\frac{1}{2}}}{k+\frac{1}{2}} \frac{H_{2k}(\frac{\lambda}{\sqrt{2}})}{8^k \, k!},
\end{split}
\end{equation}
where $H_{k}$ is the Hermite polynomial of degree $k$ \citep{szego1975}.

\medskip

Hereinafter, we present alternative analytical expressions of $g_{\lambda}$.

\subsection{First Alternative Expression}

Starting with the integral expression in (\ref{matheron}) and posing $t = \sqrt{\frac{1-u}{1+u}} = \tan v$, one obtains
\begin{equation*}
\begin{split}
    g_{\lambda} &= \frac{1}{\pi} \int_{0}^{\sqrt{\frac{1-\rho}{1+\rho}}} \exp\left(-\frac{\lambda^2(1+t^2)}{2}\right) \frac{{\rm d}t}{1+t^2} \\
    &= \frac{1}{\pi} \exp\left(-\frac{\lambda^2}{2}\right) \int_{0}^{\arctan\sqrt{\frac{1-\rho}{1+\rho}}} \exp\left(-\frac{\lambda^2 \tan^2 v}{2}\right) {\rm d}v.
\end{split}
\end{equation*}

\subsection{Second Alternative Expression}

Consider the random field $Y$ defined as
\begin{equation}
\label{nonergodicmodel}
    Y(x) = M + Z(x), \quad x \in \mathbb{X},
\end{equation}
where $M$ is a Gaussian random variable with mean $0$ and variance $\sigma^2$, independent of $Z$. As $Y$ is a zero-mean Gaussian random field with covariance function $\sigma^2 + \rho$, its median indicator variogram is (Proposition \ref{prop01})
\begin{equation}
\label{gamma0a}
    \gamma = \frac{1}{2\pi}\arccos \left(\frac{\sigma^2+\rho}{\sigma^2+1}\right).
\end{equation}
On the other hand, by conditioning the random field in (\ref{nonergodicmodel}) to the value of $M$, one can write this median indicator variogram as a mixture of indicator variograms of the excursion sets of $Z$, that is
\begin{equation}
\label{gamma0b}
\gamma = \frac{1}{\sigma\sqrt{2\pi}} \int_{-\infty}^{+\infty} \exp\left(-\frac{\lambda^2}{2\sigma^2} \right) g_\lambda {\rm d}\lambda.
\end{equation}
By equating (\ref{gamma0a}) and (\ref{gamma0b}), one finds
\begin{equation*}
     \frac{1}{\sqrt{\pi u}} \int_{0}^{+\infty} \exp\left(-\frac{\lambda^2}{4u} \right) g_\lambda {\rm d}\lambda = \frac{1}{2\pi}\arccos \left(\frac{2u+\rho}{2u+1}\right),
\end{equation*}
from which one derives the following expression of $g_\lambda$ \citep{Polyanin}, where $\mathcal{L}^{-1}$ stands for the inverse Laplace transform:
\begin{equation*}
    g_\lambda = \mathcal{L}^{-1} \left\{ \frac{p}{2\pi} \int_0^{+\infty} \exp(-p^2 t) \arccos \left(\frac{2t+\rho}{2t+1}\right) {\rm d}t \right\}[\lambda].
\end{equation*}

We invoke Post's inversion formula and the Leibniz integral rule for differentiation under the integral sign to write the inverse Laplace transform as
\begin{equation*}
    g_\lambda = \lim_{k\to +\infty} \frac{(-1)^{k}}{2\pi k!} \left(\frac{k}{\lambda}\right)^{k+1} \int_0^{+\infty} \frac{1}{-2t} f_t^{(k+1)}\left(\frac{k}{\lambda}\right) \arccos \left(\frac{2t+\rho}{2t+1}\right) {\rm d}t,
\end{equation*}
with $f_t: p \mapsto \exp(-p^2 t)$, that is: 
\begin{equation*}
\begin{split}
    g_\lambda &= \lim_{k\to +\infty} \frac{1}{2\pi k!} \int_0^{+\infty} \frac{1}{2t} \left(\frac{k \sqrt{t}}{\lambda}\right)^{k+1} \exp\left(\frac{k^2 t}{\lambda^2}\right) H_{k+1}\left(\frac{k \sqrt{t}}{\lambda}\right) \arccos \left(\frac{2t+\rho}{2t+1}\right) {\rm d}t\\
    &= \lim_{k\to +\infty} \frac{1}{2\pi k!}  \int_0^{+\infty} v^{k}  \exp(-v^2) H_{k+1}(v) \arccos \left(\frac{2v^2 \lambda^2/k^2+\rho}{2v^2\lambda^2/k^2+1}\right) {\rm d}v.
\end{split}
\end{equation*}

\section{Proofs}
\label{app_proof}
\vspace*{12pt}

The following lemma, which is of independent interest, will be required to prove Theorem \ref{thm1}.  

\begin{lemma}
\label{lem1}
    A necessary and sufficient condition for a real-valued symmetric matrix $\boldsymbol{D}$ of size $n \times n$ with zero diagonal entries to be the geodesic distance matrix of $n$ points of $\mathbb{S}^N(r)$ is that $r^2 \cos(\boldsymbol{D}/r)$ is the Gram matrix of these points, considered as vectors in $\mathbb{R}^{N+1}$, i.e., it is a positive semidefinite matrix with rank no greater than $N+1$ and with diagonal entries equal to $r^2$.  
\end{lemma}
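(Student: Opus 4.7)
The plan is to establish the equivalence through a pair of direct arguments, both hinging on the explicit expression of the geodesic distance on the sphere given in (\ref{geodesic}), namely $d_{\text{GC}}(x,y) = r \arccos(x^\top y / r^2)$ for $x, y \in \mathbb{S}^N(r) \subset \mathbb{R}^{N+1}$. The cosine and arccos will serve as the bridge between distances on the sphere and inner products in the ambient Euclidean space.

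For necessity, I would assume that $\boldsymbol{D}$ is the geodesic distance matrix of points $x_1, \ldots, x_n \in \mathbb{S}^N(r)$. Applying cosine to both sides of $D_{ij}/r = \arccos(x_i^\top x_j / r^2)$ and multiplying by $r^2$ yields $r^2 \cos(D_{ij}/r) = x_i^\top x_j$, for every $i,j$. The right-hand side is exactly the $(i,j)$-th entry of the Gram matrix $\boldsymbol{X}\boldsymbol{X}^\top$, where $\boldsymbol{X}$ is the $n \times (N+1)$ matrix whose rows are the $x_i$. Gram matrices are positive semidefinite, have rank bounded by the ambient dimension $N+1$, and their diagonal entries $\|x_i\|^2$ equal $r^2$ since the points lie on $\mathbb{S}^N(r)$.

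For sufficiency, I would assume $\boldsymbol{G} := r^2 \cos(\boldsymbol{D}/r)$ is positive semidefinite with rank at most $N+1$ and constant diagonal $r^2$. A spectral decomposition of $\boldsymbol{G}$ produces a factorization $\boldsymbol{G} = \boldsymbol{X}\boldsymbol{X}^\top$ with $\boldsymbol{X}$ an $n \times (N+1)$ matrix (keeping only the at most $N+1$ eigenvectors associated with nonzero eigenvalues, scaled by the square roots of the eigenvalues). The rows $x_1, \ldots, x_n$ of $\boldsymbol{X}$ satisfy $\|x_i\|^2 = G_{ii} = r^2$, so they lie on $\mathbb{S}^N(r)$. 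The off-diagonal entries then give $x_i^\top x_j = r^2 \cos(D_{ij}/r)$, so $\cos(D_{ij}/r) = x_i^\top x_j / r^2 \in [-1,1]$, and taking arccos recovers $D_{ij} = r \arccos(x_i^\top x_j / r^2) = d_{\text{GC}}(x_i, x_j)$.

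The only subtlety I foresee concerns the range of the entries of $\boldsymbol{D}$: since arccos returns values in $[0,\pi]$, the inversion step is unambiguous only if $D_{ij} \in [0, \pi r]$. This is implicit in the statement, as $\boldsymbol{D}$ is presented as a candidate geodesic distance matrix on $\mathbb{S}^N(r)$, and geodesic distances on this sphere are naturally bounded by $\pi r$. Beyond this convention, no serious obstacle arises; the whole proof reduces to recognizing that the map $\boldsymbol{D} \mapsto r^2 \cos(\boldsymbol{D}/r)$ converts the geodesic-distance condition on the sphere into the Gram-matrix condition in the ambient space.
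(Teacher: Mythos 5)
Your proof is correct. The paper itself does not prove this lemma but simply refers the reader to Schoenberg (1935), Bogomolny et al.\ (2008) and Wilson et al.\ (2010); your argument --- applying $\cos$ to the geodesic-distance formula $d_{\text{GC}}(x,y)=r\arccos(x^\top y/r^2)$ for necessity, and factorizing the positive semidefinite matrix $r^2\cos(\boldsymbol{D}/r)$ of rank at most $N+1$ as $\boldsymbol{X}\boldsymbol{X}^\top$ to embed the rows on $\mathbb{S}^N(r)$ for sufficiency --- is precisely the classical argument those references use, so you have in effect supplied the proof the paper outsources. Your remark that the inversion $D_{ij}=r\arccos(\cos(D_{ij}/r))$ requires $D_{ij}\in[0,\pi r]$ is a genuine (if minor) caveat left implicit in the statement, and you handle it appropriately.
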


\begin{proof}[Proof of Lemma \ref{lem1}]
    See Schoenberg \citep{Schoenberg1935}, Bogomolny et al. \citep{Bogomolny2008} or Wilson et al. \citep{Wilson2010}.
\end{proof}

\begin{proof}[Proof of Proposition \ref{closedconvex}]

\textit{Closedness.} One has to prove that, if $\{g_n: n \in \mathbb{N}\}$ is a sequence of indicator variograms tending pointwise to $g$ as $n$ tends to infinity, then $g$ is an indicator variogram. This can be done by noting that each $g_n$ satisfies the corner-positive inequalities (\ref{corner}), and so does the limit function $g$, which is therefore an indicator variogram.

\textit{Convexity.} One has to prove that, if $\{g_n: n \in \mathbb{N}\}$ is a sequence of indicator variograms and $\{\lambda_n: n \in \mathbb{N}\}$ is a sequence of nonnegative weights that add to $1$, then $g = \sum_{n \in \mathbb{N}} \lambda_n \, g_n$ is also an indicator variogram. To this end, let us denote by $Z_n$ an indicator random field with variogram $g_n$. Then, $g$ is the variogram of the indicator random field equal to $g_n$ with probability $\lambda_n$.

\end{proof}

\begin{proof}[Proof of Theorem \ref{thm1}]

\textit{Necessity.} Let $\mathbb{X}$ be a set of points, $(\Omega,\mathcal{A},P)$ a probability space, and $Z = \{Z(x,\omega): x \in \mathbb{X}, \omega \in \Omega \}$ a first-order stationary indicator random field on $\mathbb{X}$. 
Define the binary random field $\widetilde{Z} = 2Z-1$, which takes its values in $\{-1,1\}$. Such a random field has a finite expectation and a finite variance, therefore it possesses a noncentered covariance and a variogram. The latter two functions are defined as:
\begin{equation}
\label{Candg}
\begin{split}
    \widetilde{C}(x,y) &= \mathbb{E} [\widetilde{Z}(x,\cdot) \, \widetilde{Z}(y,\cdot)] = \int_{\Omega} \widetilde{Z}(x,\omega) \, \widetilde{Z}(y,\omega) \, P({\rm d}\omega), \quad x,y \in \mathbb{X}\\
    \widetilde{g}(x,y) &= \frac{1}{2} \mathbb{E} \left\{ [\widetilde{Z}(x,\cdot)-\widetilde{Z}(y,\cdot)]^2 \right\} = \frac{1}{2} \int_{\Omega} [\widetilde{Z}(x,\omega) - \widetilde{Z}(y,\omega)]^2 \, P({\rm d}\omega), \quad x,y \in \mathbb{X},
\end{split}
\end{equation}
and are related through
\begin{equation*}
    \label{Ctog}
    \widetilde{g}(x,y) = 1-\widetilde{C}(x,y)\quad x,y \in \mathbb{X}.
\end{equation*}

For $\omega \in \Omega$, define the mapping $\rho_{\omega}$ on $\mathbb{X} \times \mathbb{X}$ as
\begin{equation}
\label{covar}
    \rho_{\omega}(x,y) = \widetilde{Z}(x,\omega) \, \widetilde{Z}(y,\omega), \quad x,y \in \mathbb{X}.
\end{equation}
This is a symmetric and positive semidefinite mapping, insofar as 
$$\sum_{k=1}^n \sum_{\ell=1}^n \lambda_k \, \lambda_\ell \, \rho_{\omega}(x_k,x_\ell) = \left[\sum_{k=1}^n \lambda_k  \widetilde{Z}(x_k,\omega)\right]^2 \geq 0,$$
for any choice of the positive integer $n$, points $x_1, \ldots, x_n$ in $\mathbb{X}$ and real values $\lambda_1, \ldots, \lambda_n$. As $\rho_{\omega}(x,x)=1$ for any $x \in \mathbb{X}$, $\rho_{\omega}$ is a correlation function. Furthermore, since it takes its values in $\{-1,1\}$, one has $\rho_{\omega} = \frac{2}{\pi} \arcsin \rho_{\omega}$.

Accordingly, the noncentered covariance and the variogram in (\ref{Candg}) can be written as
\begin{equation}
\begin{split}
\label{Candg2}
    \widetilde{C}(x,y) &= \frac{2}{\pi} \int_{\Omega} \arcsin \rho_{\omega}(x,y) \, P({\rm d}\omega), \quad x,y \in \mathbb{X}\\
    \widetilde{g}(x,y) &= 1 - \widetilde{C}(x,y) = \frac{2}{\pi} \int_{\Omega} \arccos \rho_{\omega}(x,y) \, P({\rm d}\omega), \quad x,y \in \mathbb{X}.
\end{split}
\end{equation}
This part of the proof is concluded by noting that the variogram of $Z$ is $\frac{\widetilde{g}}{4}$ and that, under the assumption of separability, the sample space $\Omega$ and the probability function $P$ can be chosen as the interval  $(0,+\infty)$ and a cumulative distribution function on $(0,+\infty)$, respectively.

This result can be obtained in a straightforward manner by geometric arguments. Indeed, one can rewrite $\widetilde{g}$ in (\ref{Candg}) as
\begin{equation*}
\begin{split}
    \widetilde{g}(x,y) &= 4 \int_{\Omega} d_{\text{E}}({Z}(x,\omega),{Z}(y,\omega)) P({\rm d}\omega), \quad x,y \in \mathbb{X},
\end{split}
\end{equation*}
where $d_{\text{E}}$ is the Euclidean distance on $\{0,1\}$. It is known \citep{Critchley1997} that $\{0,1\}$ endowed with the Euclidean distance is isometrically embeddable into the circle $\mathbb{S}^1(\frac{1}{\pi})$ endowed with the geodesic distance $d_{\text{GC}}$. Using Lemma \ref{lem1}, one can therefore write
\begin{equation*}
    d_{\text{E}}({Z}(x,\omega),{Z}(y,\omega)) = \frac{1}{\pi} \arccos \rho_{\omega}(x,y), \quad x,y \in \mathbb{X}, \quad \omega \in \Omega,
\end{equation*}
for some correlation function $\rho_{\omega}$. Thus, $\widetilde{g}$ admits a representation as in (\ref{Candg2}).\\
\textit{Sufficiency.} For $t \in (0,+\infty)$, the Daniell-Kolmogorov extension theorem ensures the existence of a separable zero-mean Gaussian random field $Y_{t} = \{Y_{t}(x,\omega): x \in \mathbb{X}, \omega \in (0,+\infty)\}$ with $\rho_{t}$ as its covariance function. Let $T$ be a random variable on $(0,+\infty)$ with cumulative distribution function $F$, independent of the family of random fields $\{Y_{t}: t \in (0,+\infty)\}$. Define the indicator random field $Z = \{Z(x,\omega): x \in \mathbb{X}, \omega \in (0,+\infty)\}$ as
\begin{equation*}
    Z(x,\omega) = \mathsf{1}_{Y_T(x,\omega)>0} = \begin{cases}
        1 \text{ if $Y_T(x,\omega)>0$}\\
        0 \text{ otherwise.}
    \end{cases}
\end{equation*}
The mean value of $Z$ is $0.5$, hence $Z$ is first-order stationary, and its variogram is 
\begin{equation*}
\begin{split}
    g(x,y) &= \frac{1}{2} \mathbb{E}\{[Z(x,\cdot)-Z(y,\cdot)]^2\}\\
    &= \frac{1}{2}  \mathbb{E}\{[\mathsf{1}_{Y_T(x,\cdot)>0} - \mathsf{1}_{Y_T(y,\cdot)>0}]^2\}\\
    &= \frac{1}{2} \mathbb{E}\left\{\mathbb{E}\{ [\mathsf{1}_{Y_T(x,\cdot)>0}- \mathsf{1}_{Y_T(y,\cdot)>0}]^2 \mid T\} \right\}.
\end{split}
\end{equation*}
Owing to Proposition \ref{prop01}, this is
\begin{equation*}
\begin{split}
    g(x,y) &= \frac{1}{2\pi} \mathbb{E}\{\arccos \rho_T(x,y)\}\\
    &= \frac{1}{2\pi} \int_0^{+\infty} \arccos \rho_t(x,y) F({\rm d}t).
\end{split}
\end{equation*}
Therefore, the variogram of $Z$ is of the form (\ref{indicvariog}), which concludes the proof. 
\end{proof}

\begin{proof}[Proof of Corollary \ref{hull}]
    The proof stems from Theorem \ref{thm1} and Propositions \ref{prop01} and \ref{closedconvex}.
\end{proof}

\begin{proof}[Proof of Corollary \ref{cor01}]
The proof stems from the fact that the random field $Y_T$ introduced in the proof of Theorem \ref{thm1} is a Hermitian random field.    
\end{proof}

\begin{proof}[Proof of Remark \ref{rem2}]
    Owing to the Daniell-Kolmogorov extension theorem, for any correlation function $\rho_{\omega}$ on $\mathbb{X} \times \mathbb{X}$, the mapping $\gamma_{\omega}=1-\rho_{\omega}$ is the variogram of a standard Gaussian random field on $\mathbb{X}$. The proof then relies on the following relation between arccosine and arcsine functions that can be derived from formulae 1.623.1, 1.625.6 and 1.626.2 of Gradshteyn and Ryzhik \citep{Grad}:
    $$\arccos (1-2t^2) = 2 \arcsin t,\quad t \in [0,1].$$
\end{proof}

\begin{proof}[Proof of Proposition \ref{thm3}]
    Let $Z$ and $Z^\prime$ be two mutually independent indicator random fields on $\mathbb{X}$ with respective variograms $g$ and $g^\prime$, and let $\varpi \in [0,1]$. Then:
    \begin{enumerate}
        \item $\varpi \,g$ is the variogram of the indicator random field defined by
        \begin{equation*}
         \widetilde{Z} = \begin{cases}
            Z \text{ with probability $\varpi$}\\
            0 \text{ with probability $1-\varpi$.}            
        \end{cases}           
        \end{equation*}
        \item $\varpi \, g + (1-\varpi) \, g^\prime$ is the variogram of the indicator random field defined by
        \begin{equation*}
         \widetilde{Z} = \begin{cases}
            Z \text{ with probability $\varpi$}\\
            Z^\prime \text{ with probability $1-\varpi$.}            
        \end{cases}           
        \end{equation*}
        \item $g+g'-4 g \, g^\prime$ is the variogram of the indicator random field $\frac{1+\widetilde{Z} \widetilde{Z^\prime}}{2}$, where
    \begin{equation*}
        \widetilde{Z} = \begin{cases}
            2Z - 1 \text{ with probability $\frac{1}{2}$}\\
            1-2Z \text{ with probability $\frac{1}{2}$}            
        \end{cases}
    \end{equation*}
    and
    \begin{equation*}
        \widetilde{Z^\prime} = \begin{cases}
            2Z^\prime - 1 \text{ with probability $\frac{1}{2}$}\\
            1-2Z^\prime \text{ with probability $\frac{1}{2}$.}            
        \end{cases}
    \end{equation*}
    \end{enumerate}
    
\end{proof}

\begin{proof}[Proof of Theorem \ref{g2exp}]
    Owing to Proposition \ref{thm3}, it suffices to prove the claim of the theorem for $\varpi=1$. Let $Z$ be an indicator random field with variogram $g$. The random field $\widetilde{Z}$ defined by
    \begin{equation*}
        \widetilde{Z} = \begin{cases}
            2Z - 1 \text{ with probability $\frac{1}{2}$}\\
            1-2Z \text{ with probability $\frac{1}{2}$}            
        \end{cases}
    \end{equation*}
    takes the values $-1$ or $1$ with equal probability, i.e., it has a Rademacher marginal distribution. Its expected value is $0$, its variance is $1$, its variogram is $4g$, and its covariance function is $\widetilde{C} = 1-4g$. For $t>0$, one has
    \begin{equation*}
        \exp(-4tg)=\sum_{k=0}^{+\infty} \frac{\exp(-t) t^k}{k!} \widetilde{C}^k,
    \end{equation*}
    where $\widetilde{C}^0$ is the covariance function of the random field $\widetilde{Z}_0$ equal to $1$ or to $-1$ everywhere with equal probabilities $\frac{1}{2}$ and, for $k>0$, $\widetilde{C}^k$ is the covariance function of the random field $\widetilde{Z}_k$ with Rademacher marginal distribution obtained by multiplying $k$ independent copies of $\widetilde{Z}$. Accordingly, $\exp(-4tg)$ is the covariance function of the random field $\widetilde{Z}^\prime$ equal to $\widetilde{Z}_k$ with probability $\exp(-t) t^k/k!$, and $1-\exp(-4tg)$ is its variogram. It is concluded that $\frac{1}{4}(1-\exp(-4tg))$ is the variogram of the indicator random field $(1+\widetilde{Z}^\prime)/2$.
    
    \end{proof}

\begin{proof}[Proof of Theorem \ref{thm4}]
    The corner-positive inequalities (\ref{corner}) are necessary and sufficient for $g$ to be the variogram of a first-order stationary indicator random field \citep{Quintanilla2008}, while the negative-type and upper-bound inequalities (\ref{condneg}) and (\ref{upperbound}) are necessary. 
    All the remaining inequalities (\ref{polyg}) to (\ref{roundedpsd}) are implied by the gap inequalities \citep{Galli2012}. To prove the latter inequalities for an indicator variogram, consider an indicator random field $Z$ on $\mathbb{X}$ with variogram $g$, and define 
    \begin{equation*}
        \widetilde{Z} = \begin{cases}
            2Z - 1 \text{ with probability $\frac{1}{2}$}\\
            1-2Z \text{ with probability $\frac{1}{2}$.}            
        \end{cases}
    \end{equation*}
    The random field $\widetilde{Z}$ takes its values in $\{-1,1\}$, has a zero expected value and its covariance function is $\widetilde{C} = 1-4g$. For any $\omega \in \Omega$ and any set of weights $\lambda_1, \ldots, \lambda_n \in \mathbb{Z}$, one has
    \begin{equation*}
        \sum_{k = 1}^n \sum_{\ell = 1}^n \lambda_k \, \lambda_\ell \, \widetilde{Z}(x_k,\omega) \, \widetilde{Z}(x_\ell,\omega) = \left[\sum_{k=1}^n \lambda_k \, \widetilde{Z}(x_k,\omega) \right]^2 \geq \gamma(\boldsymbol{\lambda})^2.
    \end{equation*}
    Taking the expected values, it comes
    \begin{equation*}
        \gamma(\boldsymbol{\lambda})^2 \leq \sum_{k = 1}^n \sum_{\ell = 1}^n \lambda_k \, \lambda_\ell \, \widetilde{C}(x_k,x_\ell) = \sigma(\boldsymbol{\lambda})^2 - 4 \sum_{k = 1}^n \sum_{\ell = 1}^n \lambda_k \, \lambda_\ell \, g(x_k,x_\ell).
    \end{equation*}
    
\end{proof}

\begin{proof}[Proof of Theorem \ref{thm2}]
\textit{Necessity.} For this part of the proof, we use geometric arguments as in the proof of Theorem \ref{thm1}. Let $\mathbb{X}$ be a set of points and $Z = \{Z(x,\omega): x \in \mathbb{X}, \omega \in \Omega\}$ be a random field on $\mathbb{X}$ having a madogram $\gamma^{(1)}$. The latter is defined as:
\begin{equation*}
\begin{split}
    \gamma^{(1)}(x,y) &= \frac{1}{2} \mathbb{E} \left\{ \lvert Z(x,\cdot)-Z(y,\cdot) \rvert \right\} \\
    &=  \frac{1}{2} \int_{\Omega}  d_{\text{E}}(Z(x,\omega),Z(y,\omega)) P({\rm d}\omega), \quad x,y \in \mathbb{X},
\end{split}
\end{equation*}
where $d_{\text{E}}$ is the Euclidean distance on $\mathbb{R}$. 

For any fixed $\omega \in \Omega$, the squared Euclidean distance $(x,y) \mapsto d^2_{\text{E}}(Z(x,\omega),Z(y,\omega))$ is a conditionally negative semidefinite mapping on $\mathbb{X} \times \mathbb{X}$ \citep{Schoenberg1938, Bogomolny2008} that is symmetric and vanishes on the diagonal of $\mathbb{X} \times \mathbb{X}$. Therefore, one can write
\begin{equation*}
    d_{\text{E}}(Z(x,\omega),Z(y,\omega)) = 2 \sqrt{\gamma_{\omega}(x,y)}, \quad x,y \in \mathbb{X}, \quad \omega \in \Omega,
\end{equation*}
for some variogram $\gamma_{\omega}$ on $\mathbb{X} \times \mathbb{X}$. 

Under the assumption of separability, one can choose $\Omega=(0,+\infty)$ and $P=F$ and deduces that $\gamma^{(1)}$ can be expanded as in (\ref{madogram}).\\
\textit{Sufficiency.} 
Owing to the Daniell-Kolmogorov extension theorem, for any $t \in (0,+\infty)$, the mapping $\sqrt{\gamma_{t}}$ is the madogram of a separable Gaussian random field $Y_{t} = \{Y_t(x,\omega): x \in \mathbb{X}, \omega \in (0,+\infty)\}$ with variogram ${\pi \gamma_{t}}$ (Proposition \ref{prop02}). Accordingly, the mapping $\gamma^{(1)}$ in (\ref{madogram}) is the madogram of the random field $Z = \{Z(x,\omega): x \in \mathbb{X}, \omega \in (0,+\infty)\}$ defined as
$$Z(x,\omega) = Y_T(x,\omega), \quad x \in \mathbb{X}, \quad \omega \in (0,+\infty),$$
with $T$ a random variable on $(0,+\infty)$ with cumulative distribution function $F$, independent of the family of random fields $\{Y_t: t \in (0,+\infty)\}$.
\end{proof}

\begin{proof}[Proof of Corollary \ref{cor2}]
    The claim stems from Theorem \ref{thm2} and the fact that the variogram of a first-order stationary indicator random field is also its madogram. 
\end{proof}

\begin{proof}[Proof of Theorem \ref{mad2ind}]
The variogram of a first-order stationary indicator random field is also its madogram, which concludes the ``if'' part of the proof. Reciprocally, let $\gamma^{(1)}: \mathbb{X} \times \mathbb{X} \to [0,\frac{1}{2}]$ be the madogram of a random field $Z=\{Z(x,\omega): x \in \mathbb{X}, \omega \in \Omega\}$ with values in $[0,1]$ and stationary marginal distribution. The excursion set indicator associated with a threshold $\lambda$ is therefore first-order stationary, and its variogram is
\begin{equation*}
\begin{split}
    g_{\lambda}(x,y) &= \frac{1}{2} \mathbb{E}\{[ \mathsf{1}_{Z(x,\cdot)\geq \lambda} - \mathsf{1}_{Z(y,\cdot)\geq \lambda}]^2 \}\\
    &= \frac{1}{2} \mathbb{E}\{| \mathsf{1}_{Z(x,\cdot)\geq \lambda} - \mathsf{1}_{Z(y,\cdot)\geq \lambda}| \}\\
    &= \frac{1}{2} \int_{\Omega} | \mathsf{1}_{Z(x,\cdot)\geq \lambda} - \mathsf{1}_{Z(y,\cdot)\geq \lambda}| P({\rm d}\omega), \quad x, y \in \mathbb{X}.
\end{split}
\end{equation*}
By integrating over all the thresholds and applying Fubini's theorem, one finds \citep{Matheron1989}:
\begin{equation*}
\begin{split}
    \int_0^1 g_{\lambda}(x,y) {\rm d}\lambda &= \frac{1}{2} \int_{\Omega} \int_0^1 | \mathsf{1}_{Z(x,\omega)\geq \lambda} - \mathsf{1}_{Z(y,\omega)\geq \lambda}| {\rm d}\lambda \, P({\rm d}\omega) \\
    &= \frac{1}{2} \int_{\Omega} \int_{\min \{Z(x,\omega),Z(y,\omega)\}}^{\max \{Z(x,\omega),Z(y,\omega)\}} {\rm d}\lambda \, P({\rm d}\omega) \\
    &= \frac{1}{2} \int_{\Omega} |Z(x,\omega)-Z(y,\omega)| \, P({\rm d}\omega) \\
    &= \gamma^{(1)}(x,y), \quad x, y \in \mathbb{X}.
\end{split}
\end{equation*}

Owing to (\ref{corner}), for any $n > 1$ and any corner-positive matrix $[\lambda_{k\ell}]_{k,\ell=1}^n$, one has
\begin{equation*}
    \int_0^1 \sum_{k=1}^n \sum_{\ell=1}^n \lambda_{k\ell} [1 - 4 g_{\lambda}(x_k,x_\ell)] \, {\rm d}\lambda \geq 0,
\end{equation*}
that is
\begin{equation*}
    \sum_{k=1}^n \sum_{\ell=1}^n \lambda_{k\ell} [1 - 4 \gamma^{(1)}(x_k,x_\ell)] \geq 0.
\end{equation*}
The madogram $\gamma^{(1)}$ is a conditionally negative semidefinite mapping on $\mathbb{X} \times \mathbb{X}$ that vanishes on the diagonal and satisfies the corner-positive inequalities (\ref{corner}). It is therefore the variogram of a first-order stationary indicator random field.
\end{proof}

\begin{proof}[Proof of Corollary \ref{mad2ind2}]
    This is an immediate consequence of Theorem \ref{mad2ind}.
\end{proof}

\begin{proof}[Proof of Theorem \ref{thm5}]
    The inequalities in the first part of the theorem are derived in a similar fashion as in Theorem \ref{thm4}, based on the fact that any madogram is, up to a positive constant, the limit of a sequence of indicator variograms \citep{Matheron1989} and that the polygonal, negative-type, odd-clique, and hypermetric inequalities in Theorem \ref{thm4} are insensitive to the multiplication of function $g$ by a positive constant.

    The second part of the theorem, where the random field is assumed to be bounded and to have a stationary marginal distribution, derives from Theorems \ref{thm4} and \ref{mad2ind}.

\end{proof}

\begin{proof}[Proof of Example \ref{example3}]
     For $a>0$, define the triangular wave ${\cal T}_a$ as
    \begin{equation}
    \label{triangwave}
        {\cal T}_a(t) = \frac{1}{2\pi} \arccos \cos(a\, t), \quad t \in \mathbb{R}.
    \end{equation}
    Then, ${\cal T}_a(d_{\text{E}})$ is a valid indicator variogram on $\mathbb{R} \times \mathbb{R}$ owing to Proposition \ref{prop01} and to the fact that the cosine wave $(x,y) \mapsto \cos(a d_{\text{E}}(x,y))$ is a valid covariance on $\mathbb{R} \times \mathbb{R}$. 
    Let now $Z_a$ be an indicator random field in $\mathbb{R}$ with mean value $0.5$ and covariance function $\rho_a = \frac{1}{4} - {\cal T}_a(d_{\text{E}})$. Define an indicator random field $\widetilde{Z}_a$ in $\mathbb{R}^N$ with mean $0.5$ as
    \begin{equation*}
        \widetilde{Z}_a(x) = Z_a(\langle x,U \rangle), \quad x \in \mathbb{R}^N,
    \end{equation*}
    with $\langle \cdot, \cdot \rangle$ the inner product, and $U$ a uniform point on the sphere $\mathbb{S}^{N-1}(1)$. The covariance function $\widetilde{\rho}_a$ of $\widetilde{Z}_a$ is related to $\rho_a$ by the turning bands operator \citep{Matheron1973}:
    \begin{equation*}
        \widetilde{\rho}_a(x,y) = \mathbb{E}\left[\frac{1}{4} - {\cal T}_a(\langle x-y, U \rangle) \right], \quad x,y \in \mathbb{R}^N.
    \end{equation*}
    We expand the triangular wave into a Fourier series \citep{Oldham2009} to obtain
    \begin{equation*}
        \frac{1}{4} - {\cal T}_a(t) = \frac{2}{\pi^2} \sum_{k=0}^{+\infty} \frac{\cos((2k+1)a t)}{(2k+1)^2}, \quad t \geq 0,
    \end{equation*}
    and, therefore,
    \begin{equation*}
        \widetilde{\rho}_a(x,y) = \frac{2}{\pi^2} \sum_{k=0}^{+\infty} \frac{\mathbb{E}[\cos((2k+1)a \langle x-y, U \rangle)]}{(2k+1)^2}, \quad x,y \in \mathbb{R}^N.
    \end{equation*}
    The turning bands operator applied to the cosine covariance provides the J-Bessel covariance (denoted $\mathbb{J}_N$) \citep{Matheron1973}, i.e.
    \begin{equation*}
        \widetilde{\rho}_a(x,y) = \frac{2}{\pi^2} \sum_{k=0}^{+\infty} \frac{\mathbb{J}_N((2k+1) \, a d_{\text{E}}(x,y))}{(2k+1)^2}, \quad x,y \in \mathbb{R}^N.
    \end{equation*}
    Since the radial part $\rho$ of an isotropic correlation function in $\mathbb{R}^N \times \mathbb{R}^N$ is a scale mixture of such J-Bessel covariances, it is seen that, by randomizing the scale parameter $a$, one can construct an indicator random field in $\mathbb{R}^N$ with mean $0.5$ and covariance $$\frac{2}{\pi^2} \sum_{k=0}^{+\infty} \frac{\rho((2k+1) \, d_{\text{E}}(x,y))}{(2k+1)^2} = \frac{1}{4} - \frac{2}{\pi^2} \sum_{k=0}^{+\infty} \frac{\gamma((2k+1) d_{\text{E}}(x,y))}{(2k+1)^2}, \quad x,y \in \mathbb{R}^N,$$
    where $\gamma = 1-\rho$ is the radial part of the variogram of an isotropic standard Gaussian random field. The result follows from Proposition \ref{thm3} and the fact that the variogram of an indicator random field with mean $0.5$ is $\frac{1}{4}$ minus the covariance.
\end{proof}

\begin{proof}[Proof of Table \ref{tab:euclideanexamples}]
    On account of formulae 5.1.26.16 and 5.1.26.28 of Prudnikov et al. \citep{prud1}, the hyperbolic tangent models turn out to be particular cases of (\ref{seriesmodel}), with $\gamma$ being a Cauchy variogram:
    \begin{equation}
    \label{cauchy}
        \gamma(x,y)=1-\left(1+\frac{\pi^2 d_{\text{E}}(x,y)^2}{4\lambda^2}\right)^{-\beta}, \quad x,y \in \mathbb{R}^N,
    \end{equation}
    and $\beta=1$ (model 1) or $\beta=2$ (model 2). 

    Concerning the I-Bessel model, for $a>0$, we define the quadratic wave ${\cal Q}_a$ as the periodic repetition of the following mapping on $[0,a]$:
    \begin{equation*}
        \widetilde{{\cal Q}}_a(t) = \frac{3t}{2a} \left(1-\frac{t}{a}\right), \quad t \in [0,a].
    \end{equation*}
    Then, ${\cal Q}_a$ is a scale mixture of triangular waves of the form (\ref{triangwave}) with weights adding to $1$ \citep{Tabor2009}, so that ${\cal Q}_a(d_{\text{E}})$ is a valid indicator variogram on $\mathbb{R} \times \mathbb{R}$. The Laplace transform of ${\cal Q}_a$ is \citep{prud4}
    \begin{equation*}
        \int_0^{+\infty} {\cal Q}_a(t) \, \exp(-st) {\rm d}t = \frac{3 \sqrt{\pi}}{2a^2 (1-\exp(-as))} \left(\frac{a}{s}\right)^{\frac{3}{2}} \exp\left(-\frac{as}{2}\right) I_{\frac{3}{2}}\left(\frac{as}{2}\right), \quad s \in [0,+\infty),
    \end{equation*}
    which leads to the following identity:    
    \begin{equation*}
        \lambda \int_0^{+\infty} {\cal Q}_{1/u}(t) \, \exp(-\lambda u) {\rm d}u = \frac{3 \sqrt{\frac{\pi t}{\lambda}}}{2-2\exp(-\frac{\lambda}{t})} \exp\left(-\frac{\lambda}{2t}\right) I_{\frac{3}{2}}\left(\frac{\lambda}{2t}\right), \quad t \in [0,+\infty).
    \end{equation*}
    Accordingly, up to the factor $\varpi$, the I-Bessel model is an exponential scale mixture of valid indicator variograms on $\mathbb{R} \times \mathbb{R}$ and is itself an indicator variogram on $\mathbb{R} \times \mathbb{R}$, as per Theorem \ref{thm1} and Proposition \ref{thm3}.  
\end{proof}

\begin{proof}[Proof of Example \ref{madex}]
Consider a random field of the form
\begin{equation*}
    Z(x) = \frac{1+\cos(2\langle U,x\rangle+\phi)}{2}, \quad x \in \mathbb{R}^N,
\end{equation*}
where $U$ is a random vector in $\mathbb{R}^N$ and $\phi$ is an independent random variable uniformly distributed in $[0,2\pi]$. The madogram of $Z$ is
\begin{equation*}
\begin{split}
    \gamma^{(1)}(x,y) &= \frac{1}{4} \mathbb{E}\{|\cos(2\langle U,x\rangle+\phi) - \cos(2\langle U,y\rangle+\phi)|\}\\
    &= \frac{1}{2} \mathbb{E}\{|\sin(\langle U,x+y\rangle+\phi) \sin(\langle U,x-y\rangle)|\}\\
    &= \frac{1}{\pi} \mathbb{E}\{|\sin(\langle U,x-y\rangle)|\}, \quad x, y \in \mathbb{R}^N.
\end{split}
\end{equation*}
Using the Fourier expansion of the absolute sine, it comes:
\begin{equation*}
\begin{split}
    \gamma^{(1)}(x,y) &= \frac{1}{\pi} \sum_{k=-\infty}^{+\infty} \frac{2}{\pi(1-4k^2)} \mathbb{E}\{\exp(2\mathsf{i}k \langle U,x-y\rangle)\}, \quad x, y \in \mathbb{R}^N,
\end{split}
\end{equation*}
where $\mathsf{i}$ is the imaginary unit. If $U$ has the spectral distribution of a stationary correlation function $\rho$, then
\begin{equation*}
\begin{split}
    \gamma^{(1)}(x,y) &= \frac{2}{\pi^2} + \frac{4}{\pi^2} \sum_{k=1}^{+\infty} \frac{\rho(2k(x-y))}{1-4k^2}, \quad x, y \in \mathbb{R}^N,
\end{split}
\end{equation*}
which is equal to zero when $x=y$ owing to formula 5.1.25.4 of Prudnikov et al. \citep{prud1}. The claim of the example stems from Proposition \ref{thm3}, Theorem \ref{mad2ind} and the identity $\rho=1-\gamma$.
\end{proof}

\begin{proof}[Proof of Example \ref{complmonot}]
    Owing to (\ref{berns}), one has 
    \begin{equation}
    \label{bernstein}
        \varphi(x) + 1-\varphi(0) = \int_0^{+\infty} \exp(-t x) {\rm d}\widetilde{F}(t),
    \end{equation}    
    with ${\rm d}\widetilde{F}(t) = (1-\varphi(0)) \, \delta_0(t) + \varphi(0) {\rm d}F(t)$, where $F$ is the cumulative distribution function of a probability measure on $[0,+\infty)$ and $\delta_0$ is the Dirac measure at $t=0$. Accordingly, $\widetilde{F}$ is the cumulative distribution function of a probability measure on $[0,+\infty)$. The claim of the Example stems from Theorem \ref{thm1} and the fact that, for any $t>0$, the mapping $\rho_t: (x,y) \mapsto \sin(\frac{\pi}{2}\exp(-t d_{\text{E}}(x,y)))$ is a correlation function on $\mathbb{R}^N \times \mathbb{R}^N$ \citep{Lantuejoul2002}. 
\end{proof}

\begin{proof}[Proof of Example \ref{linexample}]
Owing to Lemma \ref{lem1}, the mapping $\rho$ defined by
\begin{equation*}
    \rho_t(x,y) = \cos\left(\frac{d_{\text{GC}}(x,y)}{r}\right), \quad x, y \in \mathbb{S}^N(1),
\end{equation*}
is a covariance function. The result follows from Propositions \ref{prop01} and \ref{thm3}. 
\end{proof}

\begin{proof}[Proof of Example \ref{expexample}]
The proof stems from Example \ref{linexample} and Theorem \ref{g2exp}. 
\end{proof}

\begin{proof}[Proof of Example \ref{triexample}]
    The mapping $g$ can be rewritten as $$g = \frac{\varpi}{2\pi} \arccos \cos(k\, d_{\text{GC}}) = \frac{\varpi}{2\pi} \arccos T_k(\cos d_{\text{GC}}),$$ where $T_k$ is the Chebyshev polynomial of the first kind of degree $k$. The claim stems from Propositions \ref{prop01} and \ref{thm3} and the fact that $T_k(\cos d_{\text{GC}})$ is a valid covariance on the unit circle \citep{Schoenberg1942}. 
\end{proof}

\begin{proof}[Proof of Example \ref{quadexample}]
    The mapping $g$ can be written as a mixture of triangular waves of the form (\ref{triangwav}) with weights that add to $1$ \citep{Tabor2009}, therefore it is a valid indicator variogram owing to Theorem \ref{thm1} and Proposition \ref{thm3}.
\end{proof}

\begin{proof}[Proof of Example \ref{examplegraph1}]
This stems from the fact that the referred models are valid indicator variograms in Euclidean spaces of any dimension, and that the square-rooted resistance distance \citep{Klein1998} and the communicability distance \citep{Estrada2012} are Euclidean distances.
\end{proof}

\begin{proof}[Proof of Example \ref{nugget}]
Owing to Proposition \ref{thm3}, it suffices to establish the result for $\varpi=1$. The proof relies on a straightforward generalization to an abstract set of points $\mathbb{X}$ of the construction proposed by McMillan  \citep{McMillan1955} for random fields on $\mathbb{Z}$. The case when $X$ is the real line has been examined by Shamai and Chaya \citep{Shamai1991}.
\end{proof}

\begin{proof}[Proof of Example \ref{generalexamples}] 
\textbf{Model 1.} 
Let $Z_1,\ldots,Z_k$ be $k$ independent copies of a Gaussian random field on $\mathbb{X}$ with variogram $a \gamma$. Let $I_1,\ldots,I_k$ be $k$ independent copies of an indicator random field on $\mathbb{R}$ with exponential covariance $\frac{1}{4}\exp(-d_\text{E})$ (Example \ref{complmonot}). Define the indicator random field
    \begin{equation*}
        \widetilde{I}(x,\omega) = \frac{1}{2} \left[1+\prod_{i=1}^k \left(2I_i(Z_i(x,\omega))-1\right)\right], \quad x \in \mathbb{X}, \omega \in \Omega.
    \end{equation*}
    Each factor in the above product is a binary ($\pm 1$) random field with a covariance given by \citep{Grad}
    \begin{equation*}
    \begin{split}
        \mathbb{E}\left[ \exp(-\lvert Z_i(x,\omega) - Z_i(y,\omega) \rvert) \right] &= \sqrt{\frac{2}{\pi}} \int_{0}^{\infty} \exp(-\sqrt{2a\gamma(x,y)} u) \exp\left(-\frac{u^2}{2}\right) {\rm d}u\\
        &= \exp(a\gamma(x,y)) \left[ 1-\text{erf}(\sqrt{a\gamma(x,y)})\right], \quad x,y\in \mathbb{X}.
    \end{split}
    \end{equation*}
    This covariance raised to power $k$ gives the covariance of $\widetilde{I}$, which is four times the covariance of the indicator random field $\frac{1+\widetilde{I}}{2}$. We invoke Proposition \ref{thm3} to conclude the proof of the first entry of the table.  
    
\noindent \textbf{Model 2.}
    The proof follows the same line of reasoning as in the previous model, with a tent covariance instead of an exponential one for the indicator random fields $I_1, \ldots, I_k$. The tent covariance is a valid indicator covariance in $\mathbb{R} \times \mathbb{R}$ \citep{Matheron1988}.

\noindent \textbf{Models 3 to 6.} 
    Let $F$ be the cumulative distribution function of a random variable on $[0,1]$, $k$ be a positive integer, and $\rho$ be a correlation function on $\mathbb{X} \times \mathbb{X}$. For $t \in [0,1]$, define the correlation function
    \begin{equation}
    \label{tk}
        \rho_t = 1 - t^k \gamma,
    \end{equation}
    with $\gamma = 1-\rho$ being the variogram of a standard Gaussian random field on $\mathbb{X}$. Based on Theorem \ref{thm1}, the following mapping is an indicator variogram:
    \begin{equation*}
        \begin{split}
            g(x,y) = \frac{1}{2\pi} \int_0^1 \arccos (1 - t^k \gamma(x,y)) \, {\rm d}F(t), \quad x,y\in \mathbb{X}.
        \end{split}
    \end{equation*}
    An integration by part gives
    \begin{equation}
    \label{IBP1}
        \begin{split}
            g(x,y) = \frac{1}{2\pi} \sqrt{\gamma(x,y)} \int_0^1 \frac{t^{\frac{k}{2}-1} [1-F(t)]}{\sqrt{2-t^k \, \gamma(x,y)}} {\rm d}t, \quad x,y\in \mathbb{X}.
        \end{split}
    \end{equation}

    \begin{enumerate}
    \item Let $k=1$ and consider the cumulative distribution function
    \begin{equation*}
        F(t) = 1 - (1-t)^{\lambda-1} \, (1-\beta t)^{-\alpha}, \quad t \in [0,1],
    \end{equation*}
    with $\lambda>0$, $\beta \in (-1,1]$ and $(\alpha,\beta,\lambda)$ such that $F$ is non-decreasing. Then, based on formula 2.2.8.5 of Prudnikov et al. \citep{prud1}, one obtains
    \begin{equation*}
        \begin{split}
            g(x,y) = \sqrt{\frac{\gamma(x,y)}{2\pi}} \frac{\Gamma(\lambda)}{2 \Gamma(\frac{1}{2}+\lambda)} F_1\left(\frac{1}{2};\frac{1}{2},\alpha;\frac{1}{2}+\lambda;{\frac{\gamma(x,y)}{2}},\beta\right), \quad x,y\in \mathbb{X},
        \end{split}
    \end{equation*}
    where $F_1$ is an Appell hypergeometric function. Model 3 is obtained by considering the cases $\beta=0$ and $\lambda \geq 1$, $\beta \in (0,1)$ and $\lambda = \alpha \geq \frac{1}{1-\beta}$, $\beta \in (-1,0)$ and $\lambda = \alpha \geq 1$, and $\beta=1$ and $\lambda-\alpha \geq 1$, for which reduction formulae of the Appell function exist \citep{Erdelyi1}.
    \item Let $k=1$ and consider the cumulative distribution function
    \begin{equation*}
        F(t) = 1 - (1-t)^{\lambda-1} \, {}_2F_1\left(\alpha,\beta;\frac{1}{2};t\right), \quad t \in [0,1],
    \end{equation*}
    with $\alpha$, $\beta>0$ and $\lambda>0$ such that $F$ is well defined and non-decreasing. A sufficient condition is that $\alpha \in (\frac{1}{2},\frac{3}{2})$, $\beta \in (\frac{1}{2}-\alpha,\frac{1}{2})$ and $\lambda \geq \alpha+\beta+\frac{1}{2}$, in which case the mapping $t \mapsto (1-t)^{\alpha+\beta-\frac{1}{2}} \, {}_2F_1\left(\alpha,\beta;\frac{1}{2};t\right)$ is non-increasing \citep{Olver} and so is $1-F$. Model 4 follows from formula 7.512.9 of Gradshteyn and Ryzhik \citep{Grad}. 
    \item Consider the cumulative distribution function
    \begin{equation*}
        F(t) = 1 - (1-t)^{\lambda-1}, \quad t \in [0,1],
    \end{equation*}
    with $\lambda > 1$. Then, based on formula 2.1.1.3 of Exton \citep{Exton1978}, one obtains
    \begin{equation*}
    \begin{split}
        g(x,y) &= \frac{1}{\pi} \sqrt{\frac{\gamma(x,y)}{2}} \, \frac{\Gamma(\frac{k}{2}) \Gamma(\lambda)}{\Gamma(\frac{k}{2}+\lambda)} \\&\times {}_{k+1}F_k\left(\frac{1}{2},\frac{1}{2},\ldots,\frac{1}{2}+\frac{k-1}{k};\frac{1}{2}+\frac{\lambda}{k},\ldots,\frac{1}{2}+\frac{\lambda+k-1}{k};{\frac{\gamma(x,y)}{2}}\right), \quad x,y\in \mathbb{X}.
    \end{split}
    \end{equation*}
    Model 5 corresponds to the case when $k=2$. 

    \item Let $k=1$. Expanding the integrand of (\ref{IBP1}) by means of Newton's generalized binomial theorem and integrating by part leads to:
    \begin{equation*}
        \begin{split}
            g(x,y) = \frac{1}{2\pi} \sqrt{\frac{\gamma(x,y)}{2\pi}} \sum_{n=0}^{+\infty} \frac{\Gamma(n+\frac{1}{2})}{(n+\frac{1}{2}) n!} \left(\frac{\gamma(x,y)}{2}\right)^n \int_0^1 t^{n+\frac{1}{2}} {\rm d}F(t), \quad x,y\in \mathbb{X}.
        \end{split}
    \end{equation*}
    Let $F$ be the cumulative distribution function of the beta distribution with parameters $(\alpha,\beta)$. Then:
    \begin{equation*}
        \begin{split}
            g(x,y) &= \frac{1}{2\pi} \sqrt{\frac{\gamma(x,y)}{2\pi}} \sum_{n=0}^{+\infty} \frac{\Gamma(n+\frac{1}{2})}{(n+\frac{1}{2}) n!} \left(\frac{\gamma(x,y)}{2}\right)^n \frac{\Gamma(\alpha+n+\frac{1}{2}) \Gamma(\alpha+\beta)}{\Gamma(\alpha) \Gamma(\alpha+\beta+n+\frac{1}{2})}, \quad x,y\in \mathbb{X},
        \end{split}
    \end{equation*}
    which yields Model 6.
    \end{enumerate}
 \end{proof}

\begin{proof}[Proof of Theorem \ref{orderineq}]
    For $\alpha \leq 1$, any variogram of order $\alpha$ is also a madogram, thus the inequalities of Theorem \ref{thm5} apply. For $\alpha \geq 1$, the inequality derives from the definition of the variogram of order $\alpha$ and from the Minkowski inequality. 
\end{proof}

\begin{proof}[Proof of Theorem \ref{order2ind}]
    Any first-order stationary indicator variogram is also a variogram of order $\alpha$. Reciprocally, any variogram of order $\alpha \leq 1$ is also a madogram, so that, under the conditions of the theorem, it is a first-order stationary indicator variogram owing to Theorem \ref{mad2ind}.
\end{proof}

\begin{proof}[Proof of Corollary \ref{alpha2beta}]
Let $\alpha, \beta \in (0,1]$ and let $Z$ be a random field with bounded values and stationary marginal distribution. Without loss of generality, assume that the support of this distribution is $[-\frac{a}{2},\frac{a}{2}]$, with $a>0$. Then, the variogram of order $\alpha$ of $Z$ is $a^\alpha$ times the variogram of order $\alpha$ of $\frac{Z}{a}+\frac{1}{2}$. Owing to Theorem \ref{order2ind}, the latter is an indicator variogram and, therefore, the variogram of order $\beta$ of a random field $Y$ with marginal distribution supported in $[0,1]$. Accordingly, the variogram of order $\alpha$ of $Z$ is the variogram of order $\beta$ of $a^{\frac{\alpha}{\beta}} Y$.
\end{proof}

\begin{proof}[Proof of Theorem \ref{thm1b}]
The proof of the first representation is a direct extension of that of Theorem \ref{thm1} to the multivariate setting, the difference being that the mapping $\rho_{\omega}$ in Eq. (\ref{covar}) is a matrix-valued correlation function instead of a scalar correlation function. The second representation follows from the proof of Remark \ref{rem2} and from Eq. (\ref{vartocov}) for standard Gaussian random fields.
\end{proof}

\begin{proof}[Proof of Theorem \ref{thm2d}]
The proof is a direct extension of that of Theorem \ref{thm2c} to the multivariate setting. The sufficiency part relies on Proposition \ref{prop02b}. The necessity part rests on the fact that the mapping $(x,y) \mapsto [d^2_{\text{E}}(Z_i(x,\omega),Z_j(y,\omega))]_{i.j=1}^p$ fulfills the conditions of a pseudo-variogram indicated in Section \ref{pseud}, in particular, the negative-type inequalities (\ref{negtype}) \citep{Schoenberg1938}. 
\end{proof}

\begin{proof}[Proof of Theorem \ref{thm2b}]
This is a particular case of Theorem \ref{thm2d}.
\end{proof}

\bigskip

\noindent \textbf{Acknowledgments.} 
This work was funded and supported by the National Agency for Research and Development of Chile [grants ANID CIA250010 and ANID Fondecyt 1250008].

\bibliographystyle{elsarticle-num-names} 
\bibliography{mybib}

\end{document}